\newtheorem{theorem}{Theorem}[section]
\newtheorem{corollary}[theorem]{Corollary}
\newtheorem{lemma}[theorem]{Lemma}
\newtheorem{proposition}[theorem]{Proposition}
\newtheorem{question}[theorem]{Question}
\newtheorem{conjecture}[theorem]{Conjecture}
\newtheorem{definition-proposition}[theorem]{Definition-Proposition}
\newtheorem{lemma-notation}[theorem]{Lemma-Notation}
\theoremstyle{definition}
\newtheorem{definition}[theorem]{Definition}
\newtheorem{remark}[theorem]{Remark}
\newcommand{\N}{\mathbb{N}}
\newcommand{\Z}{\mathbb{Z}}
\newcommand{\Q}{\mathbb{Q}}
\newcommand{\R}{\mathbb{R}}
\newcommand{\C}{\mathbb{C}}
\newcommand{\twopartdefotherwise}[3]
{
	\left\{
		\begin{array}{ll}
			#1 & \mbox{if } #2 \\
			#3 & \mbox{\textrm{otherwise.}}
		\end{array}
	\right.
}
\newcommand{\threepartdef}[6]
{
	\left\{
		\begin{array}{lll}
			#1 & \mbox{if } #2 \\
			#3 & \mbox{if } #4 \\
			#5 & \mbox{if } #6
		\end{array}
	\right.
}
\newcommand{\twobytwo}[4]
{
	\begin{pmatrix}
		#1 & #2 \\
		#3 & #4 \\
	\end{pmatrix}
}
\begin{document}
\title{Hurwitz theory of elliptic orbifolds, I}
\author{Philip Engel}
\address[Philip Engel]{University of Georgia}
\thanks{Research partially supported by NSF grant DMS-1502585.}
\email{philip.engel@uga.edu}
\maketitle

\begin{abstract} An {\it elliptic orbifold} is the quotient of an elliptic curve by a finite group. In 2001, Eskin and Okounkov proved that generating functions for the number of branched covers of an elliptic curve with specified ramification are quasimodular forms for $SL_2(\Z)$. In 2006, they generalized this theorem to branched covers of the quotient of an elliptic curve by $\pm 1$, proving quasi-modularity for $\Gamma_0(2)$. We generalize their work to the quotient of an elliptic curve by $\langle \zeta_N\rangle$ for $N=3$, $4$, $6$, proving quasi-modularity for $\Gamma(N)$, and extend their work in the case $N=2$.

It follows that certain generating functions of hexagon, square, and triangle tilings of compact surfaces are quasi-modular forms. These tilings enumerate lattice points in moduli spaces of flat surfaces. We analyze the asymptotics as the number of tiles goes to infinity, providing an algorithm to compute the Masur-Veech volumes of strata of cubic, quartic, and sextic differentials. We conclude a generalization of the Kontsevich-Zorich conjecture---these volumes are polynomial in $\pi$. \end{abstract}

\section{Introduction}

There are seventeen two-dimensional {\it crystallographic} or {\it wallpaper} groups: Symmetry groups of biperiodic tilings of $\C\cong \R^2$. Five of them preserve orientation. These five groups are the symmetries of an infinite planar tiling by the tiles shown in Figure \ref{5tiles}. These wallpaper groups are extensions of a translation group, isomorphic to $\Z^2$, by a rotation group $\langle \zeta_N\rangle$, for some $N\in \{1,2,3,4,6\}$. The ducks on each tile serve to break some or all of the rotational symmetries of the tiling.

The quotient of $\C$ by an orientation preserving wallpaper group is an {\it elliptic orbifold}, so named since the quotient by the translation subgroup is an elliptic curve $E$ which the rotation group further acts on. Note that when $N=3,4,6$, the translation group, up to a complex scalar, is $\Z[\zeta_N]$. So $E$ is uniquely determined.

This paper is concerned with the enumeration of branched covers of $X_N:=E/\langle \zeta_N\rangle$, which are the orbifold curves $\mathbb{P}_{2,2,2,2}$, $\mathbb{P}_{3,3,3}$, $\mathbb{P}_{2,4,4}$, $\mathbb{P}_{2,3,6}$ for $N=2,3,4,6$ respectively. These are exactly the one-dimensional Calabi-Yau orbifolds. Physics predicts that generating functions of curve counts with target $X_N$ are quasi-modular forms. For the Gromov-Witten theory of $X_N$, these results were proven for $N=1$ in \cite{op}, for $N=2$ in \cite{shen}, and for $N=3,4,6$ in \cite{mr}.

In this paper, we focus on {\it Hurwitz theory} of $X_N$, which employs representation theory of the symmetric group to enumerate branched covers with a specified degree and branching profile \cite{hurwitz}. Roughly, the main theorem of this paper, extending the results of Eskin and Okounkov in the $N=1$ \cite{eo1} and $N=2$ \cite{eo2} cases, is:

\begin{figure}\hspace{5pt}
\includegraphics[width=1.8in]{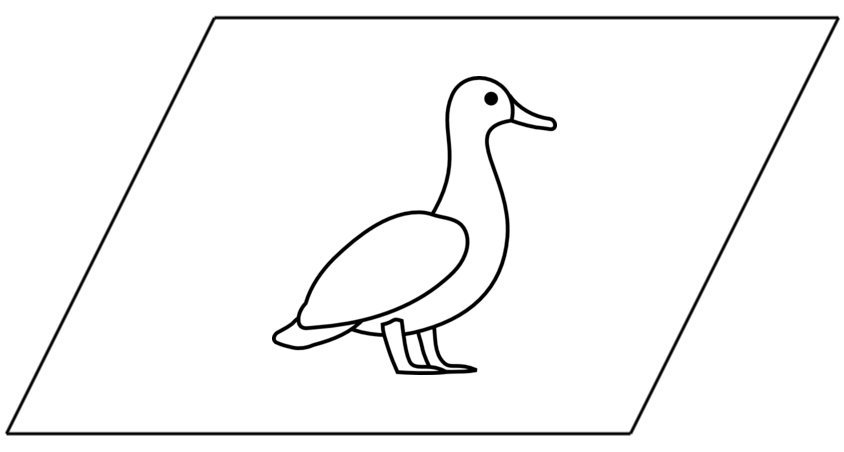}
\includegraphics[width=1.8in]{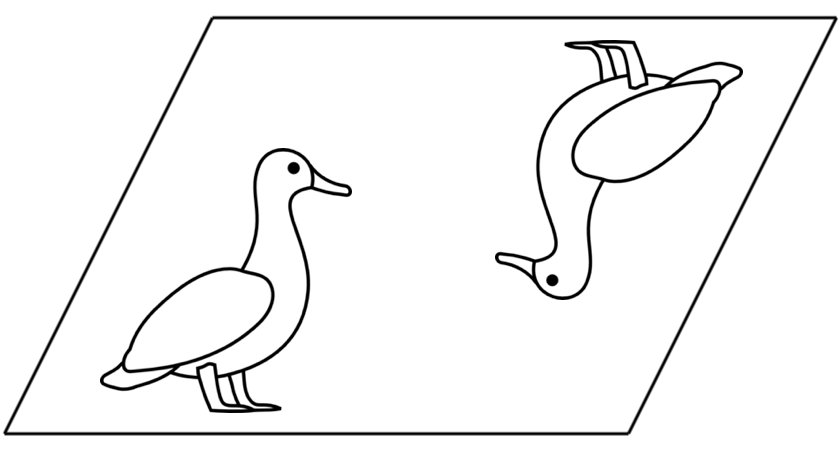} \\ \vspace{3pt}

\includegraphics[width=1.5in]{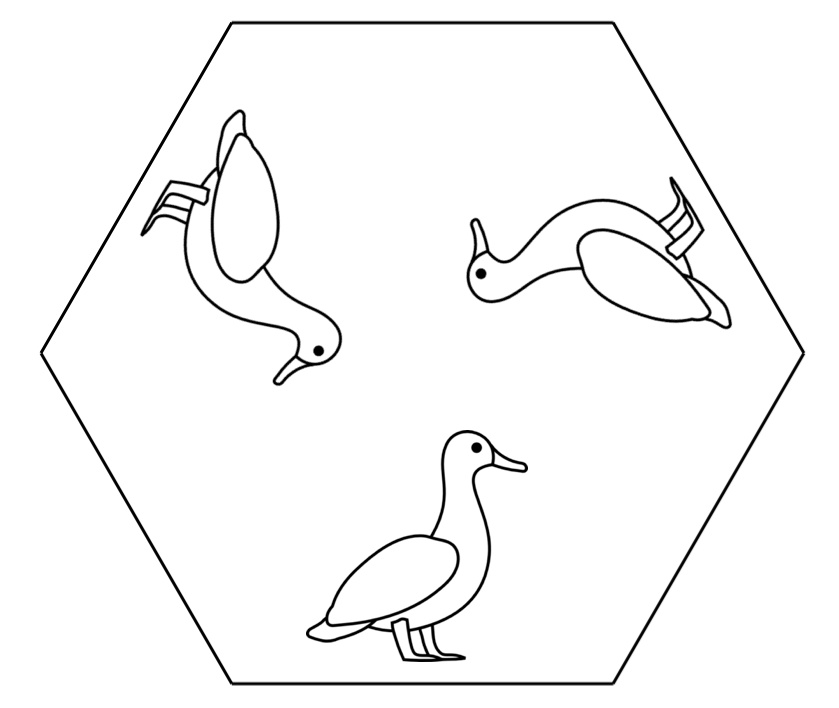}
\includegraphics[width=1.3in]{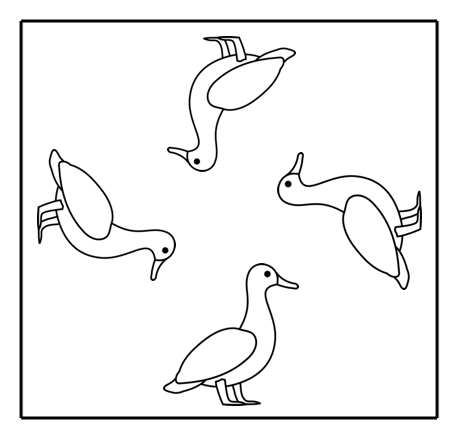}
\includegraphics[width=1.51in]{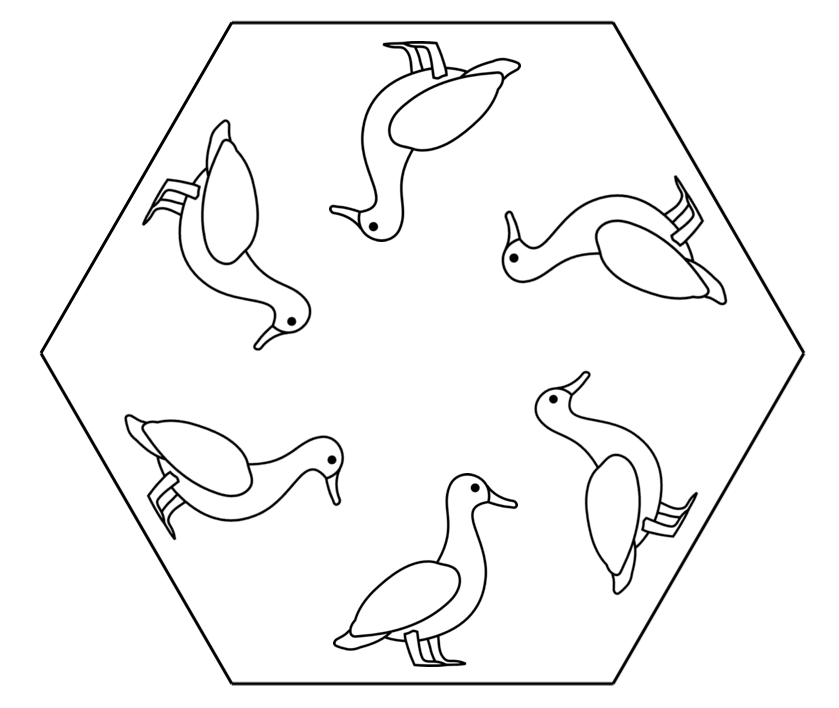}
\caption{Tiles for the five orientation-preserving wallpaper groups}
\label{5tiles}
\end{figure}

\begin{theorem}\label{intro} Let $N=1,2,3,4$, or $6$. Fix a branching profile $D$. Then the generating function $$H_N(D,q)=\!\!\!\!\sum_{\substack{ \textrm{covers }\sigma\,:\,\Sigma\rightarrow X_N\ \\ \textrm{with branching} \\ \textrm{profile }D}} \frac{1}{|{\rm Aut}\,\sigma|}\,q^{\deg(\sigma)/N}$$ is a quasimodular form of mixed weight for $\Gamma(N)$ under the substitution $q=e^{2\pi i\tau}$. Furthermore, $D$ determines an upper bound on the weight.\end{theorem}

In Section 2, we prove that covers $\Sigma\rightarrow X_N$ with appropriate branching profile are naturally in bijection with tilings of $\Sigma$ by the corresponding tile in Figure \ref{5tiles}. Here a {\it tiling} is purely combinatorial---a polyhedral complex built from one of the above polygons, such that edges are glued to edges, with adjacent tiles in the same orientation. As a consequence, one has the following application to combinatorics:

\begin{corollary}\label{intro2} Consider one of the tiles in Figure \ref{5tiles} or the triangle. The generating function for the number of tilings with specified list of non-zero curvatures is a quasimodular form of the appropriate level.\end{corollary}

Here the curvature is the difference between the valence of a vertex with the valence of a tiling of the Euclidean plane by the same tile.

The quasimodularity of the generating function for tilings has application in the study of higher differentials \cite{bcggm}. A {\it $1/N$-translation surface} is a surface with a flat metric away from a finite set with conical singularities, such that the monodromy lies in an order $N$ rotation group. Equivalently, one can consider pairs $(\Sigma,\omega)$ where $\Sigma$ is a Riemann surface and $\omega$ is a meromorphic section of the pluricanonical bundle $K^{\otimes N}$ with poles of order less than $N$. Away from its zeroes and poles, $\omega$ can be locally expressed as $(dz)^N$ for a flat coordinate $z$ well defined up to translation and order $N$ rotation. A {\it stratum} $\mathcal{H}_N(\mu)$ is the moduli space of flat surfaces whose singularities have specified list of curvatures, or equivalently pairs $(\Sigma,\omega)$ such that $\omega$ has specified orders of zeroes and poles $\mu=\{\mu_1,\dots,\mu_n\}$.

Tiled surfaces with appropriate curvatures correspond to special points in $\mathcal{H}_N(\mu)$ by specifying a flat metric on each tile---for instance, declaring each square to be regular of unit side length. The tiled surfaces evenly sample a natural volume form on $\mathcal{H}_N(\mu)$, generalizing the Masur-Veech volume form \cite{masur,veech} on strata of abelian and quadratic differentials.

In the $N=1$ case, Eskin and Okounkov \cite{eo1} proved a conjecture of Kontsevich and Zorich that the volume of any stratum of translation surfaces lies in $\Q\cdot \pi^{2g}$. By analyzing the asymptotics of quasimodular forms of level $\Gamma_1(N)$, Corollary \ref{intro2} allows for the algorithmic computation of the volume of a stratum of cubic, quartic, or sextic differentials. It is tempting to conjecture that for all $N$ and $\mu$, the volume of the stratum is a power of $\pi$ times a rational number. Relying on the recent finiteness result of \cite{ng} we prove the weaker statement:

\begin{theorem}\label{intro3} Let $N=3,4,6$. The volume of the stratum $\mathcal{H}_N(\mu)$ is a polynomial in $\pi$.\end{theorem}

In Section \ref{tilings}, we give the bijection between tilings and branched covers of $X_N$. Then, we review classical results on enumeration of branched covers. The result is a formula for the tiling generating function, and other Hurwitz numbers of elliptic orbifolds, in terms of characters of the symmetric groups.

In Section \ref{quotients}, we review the theory of the $N$-quotients and $N$-core of a partition. We summarize some results in the representation theory of symmetric groups, and introduce the (charge zero subspace of) {\it Fock space} $\mathbb{L}=\bigoplus \C v_\lambda$, an infinite-dimensional vector space with a canonical basis $\{v_\lambda\}$ indexed by partitions of all integers. We describe the action of the Heisenberg algebra on Fock space and the relationship to characters of symmetric groups.

In Section \ref{shiftedsymm}, we manipulate the output of Hurwitz theory into a suitable form. The result of these manipulations is to express $H_N(D,q)$ in terms of certain weighted sums over all partitions $\lambda=\{\lambda_1\geq \dots\geq \lambda_n\}$: $$\sum_{\lambda} {\bf F}(\lambda){\bf w}_{N,\eta}(\lambda)q^{|\lambda|/N}.$$ Here ${\bf F}(\lambda)$ lies in an enlargement $\Lambda_N^*$ of the algebra $\Lambda^*$ of {\it shifted-symmetric polynomials}, i.e.\!\! polynomials which are symmetric in the variables $\lambda_i-i$, and ${\bf w}_{N,\eta}(\lambda)$ is the {\it $\eta$-weight}, a weight on partitions depending on an $N$-core partition $\eta$ which may be expressed simply in terms of the hook lengths of $\lambda$. When $N=2, 3, 4$, or $6$, it is closely related to the Hurwitz theory of the order $N$ elliptic orbifold, but it naturally generalizes to all $N$.

In Section \ref{proof}, we prove for $\eta=\emptyset$ that these weighted sums lie in the ring of quasi-modular forms of level $\Gamma_1(N)$. Closely following \cite{eo2}, we express the above sum in terms of the trace of a product of {\it vertex operators} acting on Fock space. The crux of the proof is to recompute this trace in another canonical basis. This change of basis is an instantiation of the so-called {\it boson-fermion correspondence}. Theorem \ref{intro} follows. 

In Section \ref{volumesec}, we use our results to outline the computation of volumes of moduli spaces of cubic, quartic, and sextic differentials. We conclude Theorem \ref{intro3}. We state some open questions which the results raise. In the appendix, we work through a numerical example to compute a generating function of tiled surfaces and the volume of the stratum of cubic differentials $\mathcal{H}_3(-1,-1,-2,-2)$.

{\bf Acknowledgements:} Many thanks to Andrei Okounkov, Yaim Cooper, Peter Smillie, Eduard Duryev, Yu-Wei Fan, Curtis McMullen, Elise Goujard, Martin M\"oller, Yefeng Shen, and others for their discussion and correspondence. In addition, I thank Georg Oberdieck for pointing out that the integral of a multivariate elliptic function need not be elliptic in the remaining variables.

\section{Surface tilings and Hurwitz theory}\label{tilings}

\subsection{Tilings as branched covers} We now describe the bijection between tilings and branched covers of $X_N$.

\begin{definition} Let $N=1,2,3,4,6$. An {\it $N$-duck tiling} is a compact oriented surface formed from identifying pairs of edges of a collection of disjoint oriented $N$-duck tiles, see Figure \ref{5tiles}. \end{definition}

\begin{remark} For the $3$-duck tile, we require that only one duck stands over a given edge. This will ensure the monodromy of the flat metric described below lies in $\langle \zeta_3\rangle$. \end{remark}

The equivalence relation on tilings is combinatorial: Two tilings are isomorphic if there are bijections between the vertices, edges, and $2$-cells i.e. tiles which preserve the incidences.

Let $\mathcal{T}$ be an $N$-duck tiling of a compact, oriented surface $\Sigma$. Define a flat metric on $\Sigma$, with possible conical singularities at the vertices of $\mathcal{T}$, by declaring each tile to have a flat metric pulled back from the planar embedding shown in Figure \ref{5tiles}, and identifying edges via orientation-preserving isometries. The allowed identifications of edges lie in the group $G:=\langle \zeta_N\rangle\ltimes \C$.

\begin{definition} The {\it curvature} of a vertex of an $N$-duck tiled surface is $$N\left(1-\frac{\theta}{2\pi}\right)$$ where $\theta$ is the cone angle around the vertex. Note that the curvatures of a $6$-duck tiled surface are always even. Similarly, the curvature of a vertex of a triangulation is six minus its valence.  \end{definition}

Define $\mu:=-\kappa+N=\{N-\kappa_1,\,\dots,\,N-\kappa_n\}$. 

\begin{proposition}\label{zero} The $N$-duck tiled surfaces $\Sigma$ with $d$ tiles and curvatures $\kappa$ are in bijection with:
\begin{enumerate} \item[($N=1$)] degree $d$ covers $\sigma\,:\,\Sigma\rightarrow E$ ramified over the origin with cycle type $(1^{d-|\mu|},\mu)$.
\item[($N=2$)] degree $2d$ covers $\sigma\,:\,\Sigma\rightarrow \mathbb{P}^1$ ramified over four points with cycle types $2^d$, $2^d$, $2^d$, and $(2^{d-|\mu|/2},\mu)$.  
\item[($N=3$)] degree $3d$ covers $\sigma\,:\,\Sigma\rightarrow \mathbb{P}^1$ ramified over three points with cycle types $3^d$, $3^d$, and $(3^{d-|\mu|/3},\mu)$. 
\item[($N=4$)] degree $4d$ covers $\sigma\,:\,\Sigma\rightarrow \mathbb{P}^1$ ramified over three points with cycle types $2^{2d}$, $4^d$, and $(4^{d-|\mu|/4},\mu)$. 
\item[($N=6$)] degree $6d$ covers $\sigma\,:\,\Sigma\rightarrow \mathbb{P}^1$ ramified over three points with cycle types $2^{3d}$, $(3^{2d-|\mu|/6},\frac{1}{2}\mu)$, and $6^d$. 
\end{enumerate} In addition, the triangulated surfaces with $2d$ triangles and curvatures $\kappa$ are in bijection with covers of $\mathbb{P}^1$ of degree $6d$ ramified over three points with cycle types $2^{3d}$, $3^{2d}$, and $(6^{d-|\mu|},\mu)$. \end{proposition}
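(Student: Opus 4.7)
The proof runs both directions via the developing map, with the key step being a local analysis of ramification over each distinguished point of $G\backslash\mathbb{C}$ (the orbifold points, and the origin for $N=1$).

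For the forward direction, start with an $N$-duck tiled $\Sigma$ and the map $\sigma: \Sigma \to G\backslash\mathbb{C}$ already constructed via developing maps. Forgetting the orbifold structure gives a branched cover of $E$ (for $N=1$) or of $\mathbb{P}^1$ (for $N \geq 2$); its degree is $Nd$ (respectively $2d$ for $N=2$, via the pillowcase double cover) because a $G$-fundamental domain has area equal to $1/N$ of a single tile. Away from the orbifold points, $\sigma$ is a local isometry, hence unramified. Near an orbifold point $p$ of order $k$, the underlying smooth coordinate is $w = z^k$ where $z$ is the flat coordinate upstairs. A preimage $q$ with cone angle $2\pi\nu$ has local model $u \mapsto z = u^\nu$ in a smooth coordinate $u$ at $q$, so $\sigma$ is locally $u \mapsto w = u^{\nu k}$ with ramification $\nu k$. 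Smooth preimages ($\nu = 1$) give cycles of length $k$, while a tiling-vertex preimage with cone angle $2\pi\mu_q/N$ gives a cycle of length $\mu_q k/N$. Forcing cycle lengths to sum to the degree pins down the number of smooth preimages over the vertex orbifold point. Plugging in $k = N$ for $N = 1, 2, 3, 4$, $k = 3$ for the $6$-duck tile, and $k = 6$ for triangulations yields the stated cycle types.

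For the reverse direction, given a cover with the prescribed ramification, pull back the tile of $G\backslash\mathbb{C}$ along $\sigma$. Over every non-vertex orbifold point the ramification consists of cycles of length equal to the orbifold order $k$, which exactly cancels the local $\zeta_k$ quotient, so the pullback tiling is smooth at each such preimage. Over the vertex orbifold point, a cycle of length $\ell$ produces a preimage of cone angle $2\pi\ell/k$, hence of curvature $N(\ell - k)/k$; evaluating at the relevant values of $k$ yields $\kappa = \ell - N$ for $N = 1,2,3,4$, $\kappa = 2(\ell - 3)$ for the $6$-duck case, and $\kappa = \ell - 6$ for triangulations. In every case the prescribed cycle lengths recover the prescribed $\vec{\kappa}$, and the construction inverts the forward one.

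The main technical obstacle is the local analysis at the vertex orbifold point when $k \ne N$ (the $6$-duck and triangulation cases); in particular, one must check that the cycle lengths $\mu_q k/N$ are integers, which is where the parity constraint on $N=6$ curvatures enters.
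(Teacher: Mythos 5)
Your argument is correct and follows essentially the same route as the paper, whose proof is only a two-sentence appeal to the developing-map construction already given in Section 2 (forward direction) and to lifting the tile from the orbifold quotient (reverse direction); you have simply made explicit the local ramification computation at the orbifold points that the paper leaves implicit. Your careful bookkeeping in fact shows that the exponent in the triangulation case should read $6^{d-|\mu|/6}$ rather than $6^{d-|\mu|}$ for the cycle lengths to sum to the degree $6d$, which appears to be a typo in the statement.
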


\begin{proof} First we show that a tiling produces a branched cover. Let $\Sigma_0\subset \Sigma$ be complement of the vertices of $\mathcal{T}$. Locally, $\Sigma_0$ admits an oriented isometric embedding into $\R^2\cong \C$ which is unique up to the action of the group of oriented isometries $U(1)\ltimes \C$. Given two such charts $\pi_U\,:\,U\rightarrow \C$ and $\pi_V\,:\,V\rightarrow \C$ on contractible open sets, there is a unique orientation-preserving isometry $\phi\in U(1)\ltimes \C$ of the plane such that $\phi\circ \pi_U=\pi_V$ on $U\cap V$. By post-composing $\pi_U$ with $\phi$ we can ensure that the charts on $U$ and $V$ agree on their overlap. Gluing local charts together, we get a {\it developing map} from the universal cover $$\pi\,:\,\widetilde{\Sigma_0}\rightarrow \C$$ which is unique up to post-composition by an element of $U(1)\ltimes \C$. Pulling back the complex structure from $\C$ induces a complex structure on $\widetilde{\Sigma_0}$ which descends to $\Sigma_0$. This complex structure extends to the vertices.

The holonomy of the resulting metric lies in $G$, so we may assume that $\pi\,:\,\widetilde{\Sigma_0}\rightarrow \C$ maps the inverse image of $\mathcal{T}$ into a fixed tiling of $\C$ by $N$-duck tiles. Then $\pi$ is unique up to post-composition by an element of the crystallographic group $G=\langle \zeta_N\rangle\ltimes \Lambda$, where $\Lambda$ is the translation group preserving the tiling, and $\zeta_N$ acts by multiplication. We conclude that there is a map

$$\Sigma_0\rightarrow \C/G=X_N\cong \left\{
		\begin{array}{lllll}
			E & \mbox{if } N=1 \\
			\mathbb{P}_{2,2,2,2} & \mbox{if } N=2 \\
			\mathbb{P}_{3,3,3} & \mbox{if } N=3 \\
			\mathbb{P}_{2,4,4} & \mbox{if } N=4 \\
			\mathbb{P}_{2,3,6} & \mbox{if } N=6. \\
		\end{array}
	\right.$$

Here $\mathbb{P}_{a,b,c}$ denotes a projective line with orbifold points of the specified orders and $\mathbb{P}_{2,2,2,2}$ is the so-called {\it pillowcase} of the elliptic curve $E$ which double covers it. Forgetting the orbifold structure, we may continuously extend the above map from $\Sigma_0$ to all of $\Sigma$, and get a branched covering of either an elliptic curve (for $N=1$) or $\mathbb{P}^1$ (for $N=2,3,4,6$). This extension is holomorphic by Riemann's theorem on removable singularities.

All vertices of the tiling map to a specified point $p$, and the map from $\Sigma_0\rightarrow X_N-\{p\}$ is orbifold-unramified. The flat metric on $\Sigma$ is the pullback of the flat metric on $X_N$. The ramification orders can thus be read off from the cone angles of the metric on $\Sigma$. Since $\Sigma_0$ has no cone points, the order of ramification of a point in $\Sigma_0$ is constant over a given point $q\in X_N -\{p\}$, and equal to the order of ramification of the orbifold chart centered at $q$. Over $p$, the cone angles of the metric on $\Sigma$ are all integer multiples of the cone angle at $p\in X_N$, and this integer multiple is the ramification order.

Thus, we have seen how to produce a branched cover from a tiled surface. Conversely, given a covering of the elliptic orbifold with the appropriate ramification profile, we may lift the quotient of the tile on each orbifold to produce a tiling of the cover. As above, the cone angles of the pullback flat metric are determined by the orders of ramification.

Finally, we compare the two notions of equivalence: Two branched coverings $\sigma\,:\,\Sigma\rightarrow X$ and $\sigma'\,:\,\Sigma'\rightarrow X$ are equivalent if and only if there is a map $\psi\,:\Sigma\rightarrow \Sigma'$ making the triangle commute. Given two equivalent tilings, the isometry between them produces an isomorphism of branched covers. Conversely, two equivalent branched covers produce the same tiling since the tiling can be reconstructed from the branched cover. In particular, this applies to automorphisms: The group of deck transformations $\textrm{Aut}\,\sigma$ coincides with the oriented automorphism group of the tiling. \end{proof}

\subsection{Review of Hurwitz Theory} We derive the famous result due originally to Frobenius \cite{frobenius} and Schur \cite{schur} enumerating branched covers of $\mathbb{P}^1$ with specified ramification. The corresponding result for elliptic curves is treated in \cite{dijkgraaf,eo1} and relies on a result of Burnside \cite{burnside}.

\begin{definition} The {\it Hurwitz number} \begin{align*} H_d(\eta^1,\dots,\eta^n) &=\sum_{\sigma} \frac{1}{|\textrm{Aut}(\sigma)|}\end{align*} is the weighted count of brached covers $\sigma\colon X\rightarrow \mathbb{P}^1$ of degree $d$ with ramification profiles $\eta^i$ over fixed points $p_i\in \mathbb{P}^1$. \end{definition}

Let $*\in\mathbb{P}^1$ be a base point not equal to any $p_i$. Then $\sigma$ is determined by the monodromy action on the fiber over $*$. Choosing a labelling of the fiber over $*$ by $\{1,\dots,d\}$, we get a monodromy representation $$\rho\,:\, \pi_1(\mathbb{P}^1\backslash\{p_1,\dots,p_n\},*)\rightarrow S_d.$$ Let $\gamma_i$ be simple loops enclosing $p_i$ such that $\gamma_1\dots\gamma_n=1$. The representation $\rho$ is determined by the elements $s_i:=\rho(\gamma_i)\in C_{\eta^i}$ where $C_{\eta^i}$ is the conjugacy class of elements with cycle type $\eta^i$ in $S_d$. Thus, the set of labelled monodromy representations is in bijection with tuples $$\{(s_1,\dots,s_n)\in C_{\eta^1}\times \cdots \times C_{\eta^n}\,:\,s_1\dots s_n=1\}.$$

Let Aut$(\sigma)$ denote the centralizer of the image of $\rho$. It is natural to weight a cover by a factor of $|\textrm{Aut}\,\sigma|^{-1}$ since by orbit-stabilizer, the weighted number of covers is the size of the above set divided by $d!$. 

Let $C_\eta$ denote the sum of the group elements of cycle type $\eta$ in the group algebra $\C[S_d]$. Let $reg$ denote the regular representation of $S_d$. Note that \begin{align*}H_d(\eta^1,\dots,\eta^n) =\frac{1}{(d!)^2}\textrm{tr}_{reg}\prod C_{\eta^i}\end{align*} since the only conjugacy class with nonzero trace in $reg$ is $\{1\}$, and its trace is $d!$. Let $\chi^\lambda$ be the character of the irreducible representation of $S_d$ associated to the partition $\lambda$ and let $\textrm{dim}\,\lambda$ be its dimension. In this irreducible representation, the conjugacy class $C_\eta$ acts by a scalar by Schur's lemma. By taking traces, we see that this scalar is $${\bf f}_\eta(\lambda):=|C_\eta|\,\frac{\chi^\lambda(\eta)}{\textrm{dim}\,\lambda}.$$ The function ${\bf f}_\eta$ is called the {\it central character}. Decomposing the regular representation into irreducibles, we conclude:

\begin{proposition}[Frobenius-Schur]\label{one} The Hurwitz number counting branched covers of $\mathbb{P}^1$ is given by the formula \begin{equation}\label{frobschur} H_d(\eta^1,\dots,\eta^n)=\sum_{|\lambda|=d}\left(\frac{\dim\lambda}{d!}\right)^2\prod {\bf f}_{\eta^i}(\lambda).\end{equation} \end{proposition}

\begin{remark} We have failed to impose the condition that the branched cover is connected. This would require assuming that the group $\langle s_i\rangle\subset S_d$ acts transitively, which is somewhat unnatural representation-theoretically. We will later discuss how to impose connectedness. \end{remark}

Our main objects of study are the following generating functions of Hurwitz numbers:

\begin{definition}\label{general} Let $N=3$, $4$, or $6$. Let $\nu^i$ for $i=1,\dots,n$ be a finite collection of partitions and let $\mu^a, \mu^b, \mu^c$ be three partitions indexed by the orders $a,b,c$ of the orbifold points on $X_N$ with $a\leq b\leq c$. Define $H_N(\nu^i\,|\,\mu^a,\mu^b,\mu^c)$ to be the generating function \begin{align}\label{main}
\sum_\lambda q^{|\lambda|/N}\left(\frac{\dim\lambda}{|\lambda|!}\right)^2{\bf f}_{a,\dots,a,\mu^a}(\lambda) \,{\bf f}_{b,\dots,b,\mu^b}(\lambda) \,{\bf f}_{c, \dots,c,\mu^c}(\lambda)\prod_i {\bf f}_{\nu^i}(\lambda).  \end{align} \end{definition} For $N=1,2$, define analogously \begin{align*} H_1(\nu^i):=&\sum_\lambda q^{|\lambda|} \prod_i {\bf f}_{\nu^i}(\lambda)\,\,\,\,\,\,\,\,\,\,\textrm{ and} \\ H_2(\nu^i\,|\,\mu^a,\mu^b,\mu^c,\mu^d):=&\sum_\lambda q^{|\lambda|/2} \!\!\!\!\prod_{r\in\{a,b,c,d\}} \!\!\!\!{\bf f}_{2,\dots,2,\mu^r}(\lambda)\prod_i{\bf f}_{\nu^i}(\lambda). \end{align*}

We use the notation $D=\{\nu^i\,|\,\mu^a,\mu^b,\mu^c\}$ (or if $N=2$ including a $\mu^d$ also), to denote the {\it ramification profile} of a map to $X_N$. Then, $H_N(D)$ is the generating function for possibly disconnected Hurwitz covers with profile $D$. Let $h^{dis}_N(\kappa,q)$ denote the generating function of possibly disconnected $N$-duck tilings with nonzero curvatures $\kappa$. Propositions \ref{zero} and \ref{one} imply that \begin{align*}h_1^{dis}(\kappa,q)&= H_1(\mu) \\ h_2^{dis}(\kappa,q)&= H_2(\emptyset \,|\,\mu,\emptyset,\emptyset,\emptyset) \\ h_3^{dis}(\kappa,q)&= H_3(\emptyset \,|\,\mu,\emptyset,\emptyset) \\ h_4^{dis}(\kappa,q)&= H_4(\emptyset \,|\,\emptyset,\mu,\emptyset) \\  h_6^{dis}(\kappa,q)&= H_6(\emptyset \,|\,\emptyset,\mu,\emptyset). \end{align*} Similarly, the generating function for possibly disconnected triangulations with curvatures $\kappa$ is $$\triangle^{dis}(\kappa,q)=H_6(\emptyset \,|\,\emptyset,\emptyset,\mu).$$

\section{Quotients, cores, and the half-infinite wedge}\label{quotients}

\subsection{Quotients and Cores} Throughout this section, we identify a partition $\lambda=\{\lambda_1\geq \lambda_2\geq \dots\}$ with its Young diagram---a collection of unit boxes in the fourth quadrant of the plane whose $i$th row has length $\lambda_i$. 

\begin{definition} A {\it $t$-rim hook} $\nu$ of $\lambda$ is a contiguous string of $t$ boxes along the jagged edge of the Young diagram of $\lambda$ whose complement $\lambda\backslash \nu$ is still a Young diagram. The {\it height} $ht(\nu)$ is the physical height of $\nu$ in the plane. \end{definition}

See Figure \ref{hookpic} for an example of a $4$-rim hook of $\lambda=(6,5,2,2,1)$ of height $2$. The Murnagan-Nakayama rule \cite{murnaghan,nakayama} gives a beautiful inductive algorithm for computing the characters $\chi^\lambda(\mu)$ of $S_d$. It states: $$\chi^\lambda(\mu_1,\dots,\mu_\ell)=\sum_{\mu_1\textrm{-rim hooks }\nu}(-1)^{ht(\nu)+1}\chi^{\lambda\backslash \nu}(\mu_2,\dots,\mu_\ell).$$ Equivalently, $\chi^\lambda(\mu)$ is the signed number of {\it tableaux of shape $\lambda$ and content $\mu$}---that is, the signed number of ways to decompose $\lambda$ by first removing a rim hook of size $\mu_1$, then one of size $\mu_2$, etc. Here the sign of the tableau is the product of the signs $(-1)^{ht(\nu)+1}$ for each rim hook in the decomposition. Perhaps surprisingly, the ordering of the $\mu_i$ is irrelevant to the final answer. A special case states that $\dim \lambda = \chi^\lambda(1,\dots,1)$ is the number of standard Young tableaux of shape $\lambda$.

We will be particularly interested in characters of the form $\chi^\lambda(t,\dots,t)$. In this case, the sign of the tableau is the same for all so-called $t$-rim hook decompositions, so that $$|\chi^\lambda(t,\dots,t)|=\#\{t\textrm{-rim hook tableaux of shape }\lambda\}$$ is non-vanishing if and only if $\lambda$ admits a decomposition into $t$-rim hooks.

\begin{definition} The partition $\lambda$ is {\it $t$-decomposable} if its shape admits a decomposition into $t$-rim hooks. Let $\textrm{sgn}_t(\lambda)$ denote the sign of $\chi^\lambda(t,\dots,t)$ for a $t$-decomposable partition $\lambda$. \end{definition} 

Even when $\lambda$ is not $t$-decomposable, one may remove $t$-rim hooks from $\lambda$ until no longer possible. Then, the resulting shape is unique regardless of the manner in which the $t$-rim hooks were removed:

\begin{definition} The {\it $t$-core} $\lambda^{\textrm{mod }t}$ is the result of removing as many $t$-rim hooks as possible from $\lambda$. \end{definition}

\begin{figure}
\includegraphics[width=1.7in]{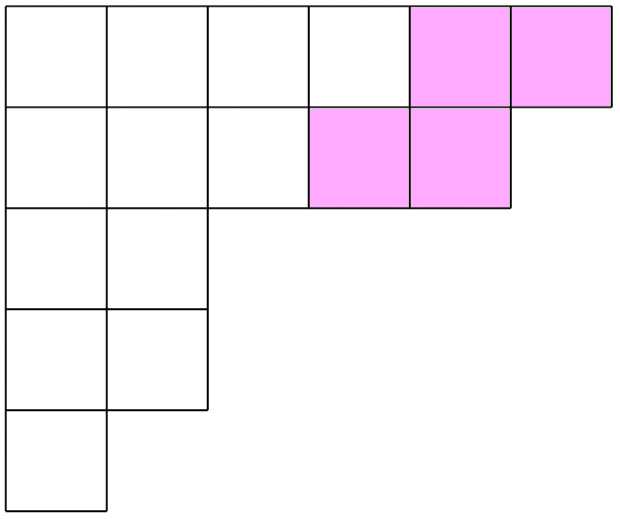}
\caption{A $4$-rim hook of the partition $\lambda=(6,5,2,2,1)$.}
\label{hookpic}
\end{figure}

\begin{figure}
\includegraphics[width=1.7in]{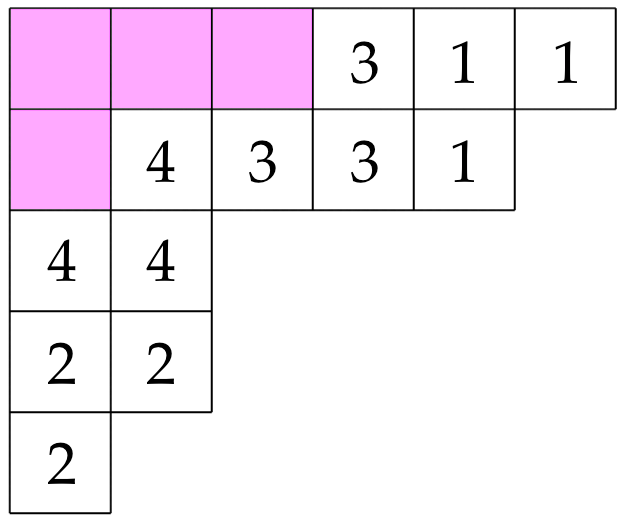}
\caption{Successively removable $3$-rim hooks from the Young diagram of the partition $\lambda=(6,5,2,2,1)$. The $3$-core $\lambda^{\textrm{mod }3}=(3,1)$ has been shaded.}
\label{fig3}
\end{figure}

\begin{figure}
\includegraphics[width=2.7in]{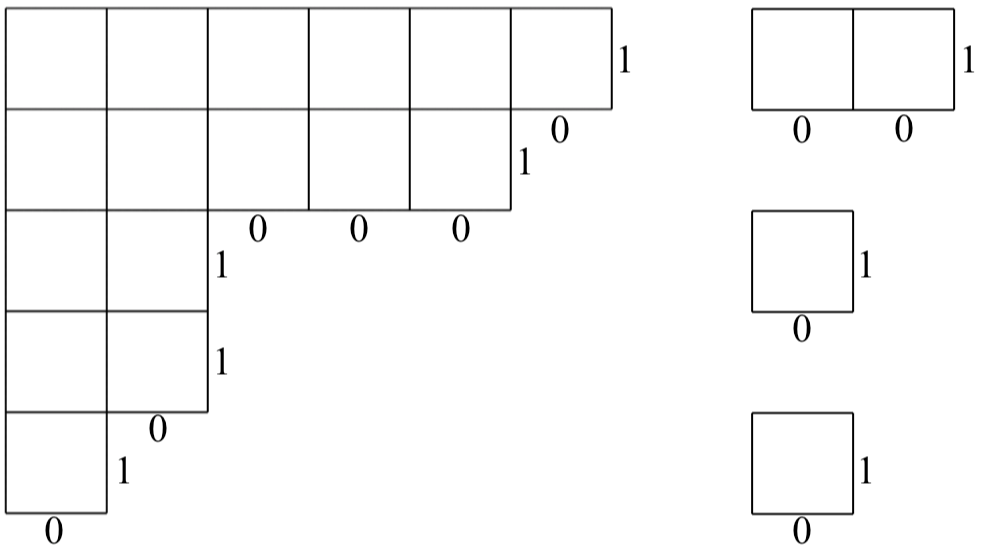}
\caption{On the left is the partition $\lambda=(6,5,2,2,1)$ and its (01)-sequence. On the right are the $3$-quotients $\lambda^{0/3}$, $\lambda^{1/3}$, and $\lambda^{2/3}$ and their $(01)$-sequences.}
\label{fig4}
\end{figure}

The removal of a $t$-rim hook of $\lambda$ is best understood in terms of the {\it $(01)$-sequence} associated to $\lambda$. This is the bi-infinite sequence of zeroes and ones which determine successively whether one goes left or down, respectively, along the boundary of the Young diagram as one proceeds from top right to bottom left. For instance, as shown in Figure \ref{fig4}, the $(01)$-sequence for $\lambda = (6,5,2,2,1)$ is $$\dots0\,\,0\,\,0\,\,0\,\,0\,\,0\,\,1\,\,0\,\,1\,\,0\,\,0 \,\,0\,\big{|}\,1\,\,1\,\,0\,\,1\,\,0\,\,1\,\,1\,\,1\,\,1\,\,1\,\,1\,\,1\,\,\dots$$ where the bar $\big{|}$ denotes the diagonal line emanating from the upper left corner of the Young diagram. The sequence begins with an infinite string of $0$'s, and terminates in an infinite string of $1$'s.

Note the number of $1$'s before the bar is equal to the number of $0$'s after the bar; we say the sequence has {\it charge zero}. We think of the terms in the $(01)$-sequence as slots, with $1$ or $0$ indicating whether the slot is occupied or unoccupied by a bead, respectively. Place a bar at zero on the real number line, so that the slots lie at the set of half-integers $\frac{1}{2}+\Z$, but with the positive real axis going left (this convention is ubiquitous throughout the subject so we maintain it). Then, the occupied slots are those indexed by the half integers $\lambda_i-i+\frac{1}{2}$.

The $t$-rim hooks of $\lambda$ then correspond to a $1$ whose position is $t$ larger than a $0$. So the $t$-rim hooks are in bijection with ways to hop a bead to the right by $t$ into an empty slot, and the sign $(-1)^{ht(\nu)+1}$ of the associated $t$-rim hook is the number of beads over which this bead hops. Furthermore the uniqueness of the $t$-core is apparent: Split the $(01)$-sequence into $t$ periodic subsequences corresponding to the congruence classes of the slot positions mod $t$. For example, when $t=3$ and the partition is $\lambda=(6,5,2,2,1)$ the subsequences are: \begin{align*}
&\dots\,\,0\,\,\,\,\,\,0\,\,\,\,\,\,1\,\,\,\,\,\,0\,\,\,\,\,\big{|}\,1\,\,\,\,\,\,1\,\,\,\,\,\,1\,\,\,\,\,\,1\,\,\dots\,\,\,\,\hspace{10pt} \lambda_i-i+\tfrac{1}{2}\equiv \tfrac{1}{2}\,\,\rm{mod}\,\,3  \\
&\,\,\dots\,\,0\,\,\,\,\,\,0\,\,\,\,\,\,0\,\,\,\,\,\,0\,\,\,\big{|}\,\,\,1\,\,\,\,\,\,0\,\,\,\,\,\,1\,\,\,\,\,\,1\,\,\dots\,\,\hspace{10pt} \lambda_i-i+\tfrac{1}{2}\equiv \tfrac{3}{2}\,\,\rm{mod}\,\,3\\
&\,\,\,\,\dots\,\,0\,\,\,\,\,\,0\,\,\,\,\,\,1\,\,\,\,\,\,0\,\big{|}\,\,\,\,\,0\,\,\,\,\,\,1\,\,\,\,\,\,1\,\,\,\,\,\,1\,\,\dots \hspace{10pt} \lambda_i-i+\tfrac{1}{2}\equiv \tfrac{5}{2}\,\,\rm{mod}\,\,3.
 \end{align*} 

To move a bead $t$ to the right is the same as choosing one of the subsequences, and moving a bead $1$ to the right. Thus, removing as many $t$-rim hooks as possible corresponds to moving all beads on each substring as far to the right as possible. This procedure is called {\it clearing the $t$-abacus}, and the resulting $(01)$-sequence is that of the $t$-core $\lambda^{\textrm{mod }t}$. Furthermore, we may also associate to $\lambda$ a collection of $t$ partitions by looking at each substring:

\begin{definition} The {\it $t$-quotients} $\lambda^{a/t}$ for $0\leq a<t$ are the partitions whose $(01)$-sequences are indexed by the slot positions congruent to $a+\tfrac{1}{2} \textrm{ mod }t$. \end{definition}

The $t$-cores are in bijection with the $A_{t-1}$ lattice $\left\{\sum x_a=0\right\}\subset \Z^t$ as follows: Each $(01)$-subsequence mod $t$ of a $t$-core is a string of zeroes followed by a string of ones. Let $x_a$ be the difference between the junction between zeroes and ones on the $a$th subsequence the location of the original bar $\big{|}$ of $\lambda$. Then the point $(x_0,\dots,x_{t-1})\in \Z^t$ satisfies $\sum x_i=0$ because the original partition $\lambda$ has charge zero. For instance, in the above example, $(x_0,x_1,x_2)=(1,-1,0).$ 

The {\it hook} of a box in $\lambda$ is the union of the box with those boxes either directly below or to the right of it. The {\it hook length} is the number of boxes forming the hook. The hooks of length $t$ are in bijection with rim-hooks of length $t$: Take the rim-hook which shares the same ends as the hook.

\begin{proposition}\label{quotient} The hook lengths of $\lambda$ which are divisible by $t$ are exactly the hook lengths of the $\lambda^{a/t}$ multiplied by $t$. Suppose furthermore that $\lambda$ is $t$-decomposable. Then \begin{align*}|\chi^\lambda(t,\dots,t)|&=\#\left\{\textrm{standard tableaux with shape }\coprod \lambda^{a/t}\right\} \\ &={|\lambda|/t \choose |\lambda^{0/t}|\,\cdots\,|\lambda^{t-1/t}|}\prod_{i=0}^{t-1}\dim\lambda^{a/t} \\ &=\frac{t^{|\lambda|/t}(|\lambda|/t)!}{\prod \textrm{hook lengths of }\lambda\textrm{ divisible by }t}.\end{align*} \end{proposition}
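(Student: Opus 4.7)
The plan is to prove all three parts by working entirely in the language of the $(01)$-sequence (abacus), which already carries the information of both hook lengths and $t$-rim hook removals.

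\textbf{Part 1.} I would first recall the bead interpretation of hook lengths: if the boxes of $\lambda$ correspond to pairs of positions $p > q$ in the $(01)$-sequence with a $1$ at $p$ and a $0$ at $q$, then the hook length of the corresponding box is $p-q$, and removing that hook corresponds to swapping the bead from $p$ to $q$. (This is the same correspondence used in the excerpt to identify $t$-rim hooks with hops of distance $t$.) Given this, a hook length is divisible by $t$ exactly when the positions $p$ and $q$ lie in the same congruence class mod $t$, i.e., on the same subsequence $S_i$. Restricting to a fixed subsequence $S_i$ and reparametrizing positions by dividing distances by $t$ yields precisely the $(01)$-sequence of $\lambda^{i/t}$, and the pair $(p,q)$ with $p-q=kt$ becomes a pair realizing a box of $\lambda^{i/t}$ with hook length $k$. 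This bijection is the content of part 1.

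\textbf{Part 2.} Since all $t$-rim hooks appearing in a decomposition of $\lambda$ into hooks of size $t$ have the same total sign contribution (the statement from the paragraph preceding Figure \ref{hookpic}), $|\chi^\lambda(t,\dots,t)|$ is simply the number of $t$-rim hook tableaux. Using again the abacus picture, removing a $t$-rim hook amounts to choosing one of the $t$ subsequences $S_i$ and hopping a single bead one unit to the right within it — i.e., removing one box from the quotient $\lambda^{i/t}$ in a way that keeps it a Young diagram. A full $t$-rim hook decomposition of $\lambda$ is therefore the same data as an order in which to remove the boxes of $\coprod_{i=0}^{t-1} \lambda^{i/t}$ such that at every step each piece remains a Young diagram. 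That data is exactly a standard Young tableau of shape $\coprod \lambda^{i/t}$.

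\textbf{Part 3.} A standard tableau on a disjoint union is obtained by first choosing which of the numbers $1,\dots,|\lambda|/t$ go to each component (a multinomial choice) and then an individual standard tableau on each $\lambda^{i/t}$. This gives the middle expression. For the final equality I would substitute the hook length formula $\dim \lambda^{i/t} = |\lambda^{i/t}|!\,/\!\prod \text{(hook lengths of }\lambda^{i/t})$, whereupon the factorials $|\lambda^{i/t}|!$ cancel against the multinomial coefficient, leaving $(|\lambda|/t)!$ over the product of hook lengths of all the $\lambda^{i/t}$. Applying part 1 to rewrite that product as $t^{-|\lambda|/t}$ times the product of hook lengths of $\lambda$ divisible by $t$ (note that $\sum_i |\lambda^{i/t}| = |\lambda|/t$, so there are exactly $|\lambda|/t$ such hook lengths) yields the stated form.

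The only genuinely delicate step is setting up the abacus/hook-length dictionary in part 1 carefully enough that both the divisibility claim and the scaling factor of $t$ are transparent; the rest is routine bookkeeping on top of that bijection together with the hook length formula.
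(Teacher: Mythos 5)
Your proof is correct and follows essentially the same route as the paper, which simply cites "the above discussion" of the $(01)$-sequence/abacus (plus the reference to Fomin--Lulov) for the first two equalities and the hook length formula for the third. You have merely filled in the details of that abacus dictionary---hook lengths as bead--gap pairs, rim-hook removal as bead hops, and the resulting identification with standard tableaux on $\coprod \lambda^{i/t}$---all of which are accurate.
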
 
\begin{proof} It follows directly from the discussion of the $(01)$-sequence that the hooks (or equivalently the rim-hooks, or the boxes) of $\lambda$ are in bijection with pairs of an occupied slot appearing to the left of an unoccupied slot, and the distance between these slots is the hook length. The first two formulae follow directly. See also \cite{fomin}. The third equality follows from the hook length formula, which states that $\frac{|\lambda|!}{\dim\lambda}$ is the product of the hook lengths of $\lambda$. \end{proof}

\subsection{Fock space formalism} The Fock space formalism encodes the above observations about $(01)$-sequences. See Okounkov-Pandharipande \cite{op} or Rios-Zertuche \cite{rz2} for further exposition. Let $$V:=\textrm{span}\,\{\underline{i}\,:\,i\in\tfrac{1}{2}+\Z\}$$ be an infinite-dimensional vector space with a basis of symbols $\underline{i}$ indexed by all half-integers.

\begin{definition} The {\it Fock space} or {\it half-infinite wedge} $\Lambda^{\infty/2}V$ is the space spanned by formal symbols $$\underline{i_1}\wedge \underline{i_2}\wedge \underline{i_3}\wedge \dots$$ over all strictly decreasing sequences $i_1>i_2>i_3>\dots$ of half integers such that $i_{j+1}=i_j-1$ for all $j\gg 0$. \end{definition} 

Define an inner product $\langle \cdot | \cdot \rangle$ on Fock space by declaring these symbols to be orthonormal. The {\it charge $C$ subspace} $\Lambda_C^{\infty/2}V$ is the sub-span of basis vectors for which $i_j=-j+\frac{1}{2}+C$ for all $j\gg 0$. Then, the charge zero subspace $\mathbb{L}:=\Lambda^{\infty/2}_0V$ has a basis naturally indexed by partitions of all integers (including the empty partition of zero): $$v_\lambda:=\underline{\lambda_1-\tfrac{1}{2}}\wedge \underline{\lambda_2-\tfrac{3}{2}}\wedge \underline{\lambda_3-\tfrac{5}{2}}\wedge \dots.$$ Denote by $v_\emptyset$ the {\it vacuum vector}, for which $\lambda_i=0$ for all $i$. The values of $i_j$ for which $\underline{i_j}$ appears in $v_\lambda$ are exactly the occupied slots discussed in the previous section.

\begin{definition} The {\it Heisenberg algebra} is the Lie algebra $$\C e\oplus \bigoplus_{n\neq 0}\C\alpha_n$$ such that $e$ is central and $[\alpha_n,\alpha_m]=n\delta_{n,-m}e$. \end{definition}

The Heisenberg algebra acts on Fock space and restricts to an action on $\mathbb{L}$ by the action on basis vectors \begin{align*}\alpha_n\,:\,\underline{i_1}\wedge \underline{i_2}\wedge \underline{i_3}\wedge\dots\mapsto &\, \underline{i_1-n}\wedge \underline{i_2}\wedge \underline{i_3}\wedge\dots+ \\ & \,\underline{i_1}\wedge \underline{i_2-n}\wedge \underline{i_3}\wedge \dots+ \dots \end{align*} where the usual rules of a wedge product are used to rewrite the right-hand side in terms of the basis of Fock space. Note that since $i_{j+1}=i_j-1$ for all $j\gg 0$, the righthand sum is in fact finite. The element $e$ acts by multiplication by $1$, so we say the representation has {\it central charge} $1$.

The sign of a rim hook exactly corresponds to how the sign of a wedge changes by reordering terms, so we have the following succinct rephrasing of the Murnaghan-Nakayama rule: $$\chi^\lambda(\mu)=[v_\emptyset] \prod_i \alpha_{\mu_i} v_\lambda = [v_\lambda] \prod_i \alpha_{-\mu_i}v_\emptyset.$$ Observe that $\alpha_n$ and $\alpha_{-n}$ are adjoint. We call $v_\lambda$ the {\it fermionic basis} of $\mathbb{L}$. Define the {\it bosonic basis}, also indexed by partitions, to be $$ |\mu\rangle:=\prod \alpha_{-\mu_i}v_\emptyset.$$ Let $H$ be the {\it energy operator} on $\mathbb{L}$, which acts by $Hv_\lambda = |\lambda|v_\lambda$. Then $v_\lambda$ and $| \mu\rangle$ are both eigenbases for the action of $H$, and the change-of-basis between the fermionic and bosonic bases at energy level $d$ (that is, on the eigenspace of $H$ with eigenvalue $d$) is the matrix of inner products $$\langle v_\lambda|\mu\rangle=\chi^\lambda(\mu)$$ i.e. exactly the character table of $S_d$. 

The bijective correspondence between a partition and its collection of $t$-quotients and $t$-core can be rephrased in terms of an isomorphism $$\mathbb{L}\xrightarrow{\sim} \mathbb{L}^{\otimes\hspace{0.2pt} t} \otimes \C[A_{t-1}]$$ where $\C[A_{t-1}]$ is the group algebra. This isomorphism sends $$v_\lambda\mapsto\pm (v_{\lambda^{0/t}}\otimes  \cdots \otimes  v_{\lambda^{t-1/t}})\otimes \lambda^{\textrm{mod }t}$$ where the $t$-core $\lambda^{\textrm{mod }t}$ is identified with the associated lattice point in $A_{t-1}$. 

\section{Shifted-symmetric polynomials}\label{shiftedsymm}

\subsection{Bases of shifted-symmetric polynomials} As in \cite{eo1}, our starting point in the analysis of (\ref{main}) is a theorem of Kerov and Olshanski that ${\bf f}_\nu(\lambda)$ is a shifted-symmetric polynomial.

\begin{definition} Let $\Lambda^*(n)$ be the ring of {\it shifted-symmetric polynomials} in $n$ variables, i.e. polynomials in the variables $\lambda_1,\dots,\lambda_n$ which are symmetric in $\lambda_i-i$. Then $\Lambda^*(n+1)$ maps to $\Lambda^*(n)$ by setting $\lambda_{n+1}=0$. Define $\Lambda^*:=\displaystyle\lim_{\longleftarrow} \Lambda^*(n)$ to be the inverse limit of these algebras. \end{definition}

As in the usual ring of symmetric polynomials, the algebra $\Lambda^*$ has a number of natural bases, such as the monomials in the power sums $${\bf p}_k(\lambda)=(1-2^{-k})\zeta(-k)+\sum_i \left[ \left(\lambda_i-i+\tfrac{1}{2}\right)^k-\left(-i+\tfrac{1}{2}\right)^k\right].$$ The reason for the unusual constant is that one would like to define $${\bf p}_k(\lambda)``=" \sum_i (\lambda_i-i+\tfrac{1}{2})^k$$ but naively, this sum is divergent when evaluated on any partition, since $\lambda_i=0$ for all $i\gg 0$. Hence one subtracts an appropriate constant inside each summand, and adds back the renormalized infinite sum on the outside. The monomials ${\bf p}_\mu:=\prod {\bf p}_{\mu_i}$ form a basis of $\Lambda^*$. We have

\begin{theorem}[Kerov-Olshanski, \cite{kerov}]\label{kero} For any partition $\nu$, the function ${\bf f}_{\nu}(\lambda)$ on partitions lies in the ring of shifted-symmetric polynomials ${\bf f}_\nu\in \Lambda^*$. \end{theorem}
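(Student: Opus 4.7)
The plan is to reduce the problem to two independent statements involving the content power sums $p^c_r(\lambda) := \sum_{\square \in \lambda} c(\square)^r$, where $c(i,j) := j - i$ is the content of the box $(i,j) \in \lambda$: namely (a) every ${\bf f}_\nu$ is a universal polynomial---independent of $|\lambda|$---in the $p^c_r$, and (b) each $p^c_r$ itself lies in $\Lambda^*$.

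For (a), I would appeal to the Jucys--Murphy elements $J_k := \sum_{\ell<k}(\ell\,k) \in \C[S_d]$. Symmetric functions of $J_1,\dots,J_d$ are central and act on $V^\lambda$ by the same symmetric polynomial evaluated at the multiset of contents $\{c(\square):\square\in\lambda\}$; in particular, the power sums $P_r := J_1^r + \cdots + J_d^r$ act on $V^\lambda$ by the scalar $p^c_r(\lambda)$. The Farahat--Higman theorem then asserts that the class sum $C_\nu \in Z(\C[S_d])$ (with $\nu$ padded by $1$'s to total size $d$) is expressible as a polynomial $Q_\nu(P_1, P_2, \ldots)$ in the $P_r$ with coefficients \emph{independent of $d$}. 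Applying this identity to $V^\lambda$ yields ${\bf f}_\nu(\lambda) = Q_\nu(p^c_1(\lambda), p^c_2(\lambda), \ldots)$.

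For (b), I would write, using $c(i,j) = j-i$ with $j$ running from $1$ to $\lambda_i$,
$$p^c_r(\lambda) = \sum_{i\geq 1}\sum_{j=1}^{\lambda_i}(j-i)^r = \sum_{i\geq 1}\frac{1}{r+1}\bigl[B_{r+1}(\lambda_i - i + 1) - B_{r+1}(-i+1)\bigr],$$
where $B_{r+1}$ is the Bernoulli polynomial. Expanding each summand as a polynomial in the shifted variable $x_i := \lambda_i - i + \tfrac{1}{2}$, the result is a finite $\Z$-linear combination of the regularized power sums ${\bf p}_s$ from the excerpt (for $s \leq r+1$), each manifestly shifted-symmetric. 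Hence $p^c_r \in \Lambda^*$, and combining (a) and (b) gives ${\bf f}_\nu \in \Lambda^*$.

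The main obstacle is (a): Farahat--Higman's assertion that class sums expand as $d$-independent polynomials in the JM power sums is nontrivial, requiring a careful stability argument across the tower of symmetric groups. A conceptually cleaner alternative, better suited to this paper's framework, would be to exploit the Fock space formalism of Section 3: express $\chi^\lambda(\nu)/\dim\lambda$ via matrix elements of the form $[v_\emptyset]\prod\alpha_{\nu_i}v_\lambda$, introduce diagonal operators $\mathcal{E}_r$ whose eigenvalue on $v_\lambda$ is ${\bf p}_r(\lambda)$, and use normal-ordering in the Heisenberg algebra to exhibit ${\bf f}_\nu$ directly as a polynomial in the ${\bf p}_r$. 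This route bypasses Farahat--Higman entirely while staying within the language already developed in the paper.
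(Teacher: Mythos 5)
The paper does not prove this statement at all: Theorem \ref{kero} is imported verbatim from Kerov--Olshanski \cite{kerov}, so there is no internal argument to measure yours against; I can only assess the proposal on its own terms and against the original proof. Your step (b) is complete and correct: writing $\sum_{j=1}^{\lambda_i}(j-i)^r$ via Bernoulli polynomials, re-expanding in the shifted variable $\lambda_i-i+\tfrac12$, and telescoping against the empty partition exhibits $p^c_r$ as a $\Q$-linear (not $\Z$-linear, but that is immaterial) combination of the ${\bf p}_s$ with $s\le r+1$ plus a constant, hence $p^c_r\in\Lambda^*$.

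Step (a) is where all the difficulty lives, and as written it has two problems. First, the attribution is off: the Farahat--Higman theorem asserts polynomiality in $d$ of the structure constants of $Z(\C[S_d])$ in the basis of reduced class sums; it does not by itself say that $C_\nu$ is a $d$-independent polynomial in the Jucys--Murphy power sums. That statement is true, but it requires combining Farahat--Higman with the Jucys--Murphy generation theorem and a filtration/triangularity argument, or citing it directly (Corteel--Goupil--Schaeffer, \emph{Content evaluation and class symmetric functions}; Lascoux--Thibon). Second, $Q_\nu(P_1,P_2,\dots)$ with $d$-independent coefficients is literally false: already $C_{(3)}=P_2-\binom{d}{2}$, i.e.\ ${\bf f}_{(3)}(\lambda)=p^c_2(\lambda)-\binom{|\lambda|}{2}$, so $P_0=d\cdot 1$ must be admitted as an argument. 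This is harmless for your conclusion, since $|\lambda|={\bf p}_1(\lambda)+\mathrm{const}\in\Lambda^*$, but it needs to be said. With these repairs the reduction is sound; however, it outsources essentially the entire content of Kerov--Olshanski to another stability theorem of comparable depth, which you candidly acknowledge. For comparison, the original proof runs differently: one establishes the single-cycle case ${\bf f}_{(k)}\in\Lambda^*$ directly from the Frobenius character formula, extracting ${\bf f}_{(k)}(\lambda)$ as a residue of the ratio $\Phi(z-k;\lambda)/\Phi(z;\lambda)$ with $\Phi(z;\lambda)=\prod_i(z+i)/(z+i-\lambda_i)$, and then handles general $\nu$ via the filtration identity ${\bf f}_\mu{\bf f}_\nu={\bf f}_{\mu\cup\nu}+(\textrm{lower order})$; that route also yields the degree bound $\deg {\bf f}_\nu=|\nu|+\ell(\nu)$ that underlies the weight bounds used elsewhere in this paper. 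Your closing suggestion---diagonal vertex operators on Fock space and normal ordering---is essentially the Okounkov--Pandharipande mechanism and would mesh best with Section 3 here, but as written it is only a sketch, not a proof.
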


Here if $|\nu|< |\lambda|$, we set $${\bf f}_\nu(\lambda):={|\lambda| \choose |\nu|}|C_\nu| \frac{\chi^\lambda(\nu,1,\dots,1)}{\dim \lambda},$$ and if $|\nu|>|\lambda|$ then ${\bf f}_\nu(\lambda)=0$. Theorem \ref{kero} implies that for any $\nu$, the function ${\bf f}_\nu$ is uniquely expressible as a polynomial in the ${\bf p}_k$'s. It is best to package the functions ${\bf p}_k(\lambda)$ together as the Taylor coefficients of a function $${\bf e}(\lambda,z):=\sum_i e^{(\lambda_i-i+\frac{1}{2})z}=\frac{1}{z}+\sum {\bf p}_k(\lambda)\frac{z^k}{k!}.$$ Then, another formulation of the theorem of Kerov and Olshanski is that ${\bf f}_\nu(\lambda)$ is a finite linear combination of Taylor coefficients $$[z_1^{k_1}\dots z_n^{k_n}]\, {\bf e}(\lambda,z_1)\dots {\bf e}(\lambda,z_n).$$

The ring $\Lambda^*$ also has shifted analogues of the Schur functions, introduced by Okounkov and Olshanski \cite{oo}. First define a {\it skew diagram} to be the complement of one Young diagram which is contained in another. If $\eta\subset \lambda$, denote the skew diagram by $\lambda\backslash \eta$. We may extend the Murnaghan-Nakayama rule, defining $\chi^{\lambda\backslash \eta} (\nu)$ to be the signed number of tableaux of shape $\lambda\backslash \eta$ with content $\nu$. For instance, we define $\dim \lambda\backslash \eta:=\chi^{\lambda\backslash \eta}(1,\dots,1)$. 

\begin{definition} The {\it shifted Schur function} ${\bf s}_\eta(\lambda)$ are defined by the property ${\bf s}_\eta(\lambda)=0$ unless $\eta\subset \lambda$, in which case \begin{align}\label{six}\frac{\dim \lambda\backslash \eta}{|\lambda\backslash \eta|!}=\frac{\dim \lambda}{|\lambda|!}{\bf s}_\eta(\lambda).\end{align}  \end{definition}

Both the ${\bf f}_\nu$ and ${\bf s}_\eta$ form bases of $\Lambda^*$. 

\subsection{Characters and quotients} We now analyze equation \ref{main}, in particular, the functions ${\bf f}_{t,\dots,t,\mu}(\lambda)$ which encode the branching over the orbifold point on $X_N$ of order $t$.

\begin{remark} Let $\lambda\backslash \eta$ be a $t$-decomposable skew shape. Then the quotient shapes $(\lambda\backslash \eta)^{a/t}:=\lambda^{a/t}\backslash \eta^{a/t}$ are well-defined, and the analogue of Proposition \ref{quotient} holds for the skew character $\chi^{\lambda\backslash \eta}(t,\dots,t)$. \end{remark}

Define $\mathfrak{z}(\mu):=|{\rm Aut}\,\mu|\prod \mu_i$. We have \begin{align*} {\bf f}_{t,\dots,t,\mu}(\lambda) & =|C_{t,\dots,t,\mu}| \frac{\chi^\lambda(t,\dots,t,\mu)}{\dim \lambda} \\ &=\frac{|\lambda|!}{t^{|\lambda\backslash \mu|/t}(|\lambda\backslash \mu|/t)!\mathfrak{z}(\mu)\dim \lambda}\!\!\!\!\! \sum_{\substack{ |\eta|=|\mu| \\ \eta^{{\rm mod}\,t}=\lambda^{{\rm mod}\,t}}} \!\!\!\!\!\chi^\eta(\mu) \chi^{\lambda\backslash \eta}(t,\dots,t) \end{align*} where the second equality follows from the Murnaghan-Nakayama rule---we may remove the $t$-ribbons first, and then decompose remaining shape $\eta$ into $\mu_i$-ribbons. Now we apply the analogue of Proposition \ref{quotient} to conclude that $${\bf f}_{t,\dots,t,\mu}(\lambda)=\frac{|\lambda|!}{t^{|\lambda\backslash \mu|/t}\mathfrak{z}(\mu)\dim \lambda}\!\!\!\!\! \sum_{\substack{ |\eta|=|\mu| \\ \eta^{{\rm mod}\,t}=\lambda^{{\rm mod}\,t}}} \!\!\!\!\!\chi^\eta(\mu)\frac{{\rm sgn}_t(\lambda)}{{\rm sgn}_t(\eta)} \prod_{a\textrm{ mod }t} \frac{ \dim (\lambda\backslash \eta)^{a/t}}{|(\lambda\backslash \eta)^{a/t}|!}$$ where the definition of ${\rm sgn}_t(\lambda)$ is extended to shapes with non-empty $t$-core by declaring it to be the sign of a $t$-ribbon tableau of shape $\lambda\backslash \lambda^{{\rm mod}\,t}$.

\begin{definition} Define the function $${\bf c}_t(\lambda)=\frac{\textrm{sgn}_t(\lambda)}{t^{|\lambda|/t}}\cdot \frac{|\lambda|!}{\dim \lambda}\prod_{a\textrm{ mod }t} \frac{\dim \lambda^{a/t}}{|\lambda^{a/t}|!}.$$ \end{definition}

The utility of this function is that it is visibly independent of $\mu$ but is in some sense ``close" to ${\bf f}_{t,\dots,t,\mu}(\lambda)$. In particular, \ref{six} implies the following expression for their ratio:

\begin{equation}\label{gdef} \frac{{\bf f}_{t,\dots,t,\mu}(\lambda)}{{\bf c}_t(\lambda)}=\frac{t^{|\mu|/t}}{\mathfrak{z}(\mu)}\!\!\!\!\! \sum_{\substack{ |\eta|=|\mu| \\ \eta^{{\rm mod}\,t}=\lambda^{{\rm mod}\,t}}} \!\!\!\!\!\chi^\eta(\mu){\rm sgn}_t(\eta)\prod_{a\textrm{ mod }t}{\bf s}_{\eta^{a/t}}(\lambda^{a/t})\end{equation}

\begin{remark} Rios-Zertuche \cite{rz} observed that this equality for $t=2$ simplifies part of the proof of Theorem 2 of \cite{eo2}. \end{remark}  

We are led to the following definition, which generalizes the $t=2$ and $\lambda^{{\rm mod}\,2}=\emptyset$ case in \cite{eo2}:

\begin{definition} Define ${\bf g}^t_\mu(\lambda):={\bf c}_t(\lambda)^{-1}{\bf f}_{t,\dots,t,\mu}(\lambda)$. \end{definition}

\begin{remark}\label{irrational} Note that ${\bf g}_\mu^t(\lambda)$ may be irrational, when $t$ does not divide $|\mu|$. But then ${\bf c}_t(\lambda)$ will also be irrational. Taking the floor of the power $|\mu|/t$ resolves this issue, but makes the formulas slightly less clean. \end{remark}

We make the following definition, which combines the factors in (\ref{main}) not involving the ${\bf g}$ functions.

\begin{definition}\label{w} Let $\eta$ be an $N$-core and suppose $N=2,3,4,6$. Define the {\it unnormalized $\eta$-weight} of $\lambda$ to be $${\bf \tilde{w}}_{N,\eta}(\lambda):= \twopartdefotherwise{\displaystyle\left(\frac{\dim \lambda}{|\lambda|!}\right)^2\prod_t{\bf c}_t(\lambda)}{\lambda^{\textrm{mod }N}=\eta}{0}$$ The product ranges over the orders $t$ of the orbifold points on $X_N$. \end{definition}

Returning to the analysis of equation (\ref{main}), we may substitute ${\bf f}_{t,\dots,t,\mu}(\lambda)$ for ${\bf g}_\mu^t(\lambda)\cdot {\bf c}_t(\lambda)$, and collect terms for which the core $\eta=\lambda^{{\rm mod}\,N}$ is fixed. This gives a formula for the Hurwitz numbers of $X_N$:

\begin{proposition}\label{modify} Let $N=3,4,6$. Fix an $N$-core $\eta$ and ramification data $D=\{\nu^i\,|\,\mu^a, \mu^b, \mu^c\}$ as in Definition \ref{general}. Let $${\bf F}^D_\eta(\lambda):=\prod {\bf f}_{\nu^i}(\lambda)\!\!\!\!\!\!\!\!\sum_{\substack{|\eta^t|=|\mu^t| \\ (\eta^t)^{{\rm mod }\,t}=\eta^{{\rm mod }\,t}}}\!\! \prod_{t\,\in\,\{a,b,c\}}{\bf g}^t_{\eta^t}(\lambda).$$ Note that ${\bf F}^D_\eta(\lambda)$ is non-zero for only finitely many $\eta$ since the sum is vacuous unless $|\eta|\leq |\mu^c|$. Then, we have \begin{align*} H_N(D)=\sum_{N\textrm{-cores }\eta}\sum_{\lambda} q^{|\lambda|/N}{\bf \tilde{w}}_{N,\eta}(\lambda){\bf F}^D_\eta(\lambda).\end{align*}  An analogous statement holds when $N=2$ for $D=\{\nu^i\,|\,\mu^a,\mu^b,\mu^c,\mu^d\}$. \end{proposition}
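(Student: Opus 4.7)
The plan is to apply Proposition \ref{useful} to each of the three ramification factors ${\bf f}_{t,\dots,t,\mu^t}(\lambda)$ in Definition \ref{general} and then partition the outer sum over $\lambda$ by its $N$-core. Proposition \ref{useful} rewrites ${\bf f}_{t,\dots,t,\mu^t}(\lambda)$ as ${\bf c}_t(\lambda)$ times a sum over partitions $\eta^t$ with $|\eta^t| = |\mu^t|$ and $(\eta^t)^{\textrm{mod }t} = \lambda^{\textrm{mod }t}$, whose summand is exactly the expression appearing inside the definition of ${\bf F}^D_\eta$. Taking the product over $t \in \{a,b,c\}$ and combining the resulting $\prod_t {\bf c}_t(\lambda)$ with the factor $(\dim\lambda/|\lambda|!)^2$ from the definition of $H_N(D)$, Definition \ref{w} identifies this prefactor as ${\bf \tilde{w}}_{N,\eta}(\lambda)$ whenever $\lambda^{\textrm{mod }N} = \eta$ (and zero otherwise).

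Next, I would group the outer sum over $\lambda$ by its $N$-core $\eta = \lambda^{\textrm{mod }N}$. The crucial point enabling this reorganization is that the constraints $(\eta^t)^{\textrm{mod }t} = \lambda^{\textrm{mod }t}$ from Proposition \ref{useful} decouple from $\lambda$: since each $t\in \{a,b,c\}$ divides $N$, every $N$-rim hook can be cut into $N/t$ consecutive $t$-rim hooks, so $\lambda^{\textrm{mod }t} = \eta^{\textrm{mod }t}$ whenever $\lambda^{\textrm{mod }N} = \eta$. Thus after grouping by $N$-core the three constraints become the $\lambda$-independent conditions $(\eta^t)^{\textrm{mod }t} = \eta^{\textrm{mod }t}$ appearing in ${\bf F}^D_\eta(\lambda)$. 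Folding the triple product of sums into a single sum over the triple $(\eta^a, \eta^b, \eta^c)$ and collecting the remaining factors produces exactly ${\bf F}^D_\eta(\lambda)$, yielding the claimed identity.

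The main obstacle is really just this compatibility check between the three separate $t$-core conditions and the single $N$-core condition implicit in ${\bf \tilde{w}}_{N,\eta}$. Once it is dispatched by the divisibility observation above, all signs $\textrm{sgn}_t(\eta^t)$, constants $t^{|\mu^t|/t}$, characters $\chi^{\eta^t}(\mu^t)$, and Schur factors ${\bf s}_{(\eta^t)^{i/t}}(\lambda^{i/t})$ transfer from Proposition \ref{useful} to ${\bf F}^D_\eta$ without modification. The finite-support claim $|\eta| \leq |\mu^c|$ is immediate: in all three cases $N = 3, 4, 6$ we have $c = N$, so $\eta^{\textrm{mod }c} = \eta$; any $\eta^c$ appearing in the sum then has $\eta \subset \eta^c$ (since $\eta^c$ is obtained from its $c$-core $\eta$ by adjoining $c$-rim hooks) and $|\eta^c| = |\mu^c|$, forcing $|\eta|\leq |\mu^c|$. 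The $N=2$ case is handled by exactly the same argument with four ramification factors in place of three.
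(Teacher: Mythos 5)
Your proposal is correct and follows essentially the same route as the paper's proof: substitute Proposition \ref{useful} into Definition \ref{general}, regroup the sum over $\lambda$ by its $N$-core, identify the prefactor with ${\bf \tilde{w}}_{N,\eta}$ via Definition \ref{w}, and use $t\mid N$ to replace the $\lambda$-dependent condition $(\eta^t)^{\textrm{mod }t}=\lambda^{\textrm{mod }t}$ by $(\eta^t)^{\textrm{mod }t}=\eta^{\textrm{mod }t}$ (the paper phrases this as $(\lambda^{\textrm{mod }N})^{\textrm{mod }t}=\lambda^{\textrm{mod }t}$, which is the same divisibility observation you make). Your added justification of the finite-support claim is a welcome bit of extra detail not spelled out in the paper.
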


\subsection{Enlargements of $\Lambda^*$} We now analyze the first ingredient in the formula of Proposition \ref{modify}, namely the functions ${\bf F}^D_\eta(\lambda)$. By (\ref{gdef}), the ${\bf g}^t_\eta(\lambda)$ is expressible in terms of shifted-symmetric polynomials in the $t$-quotients of $\lambda$ for $t\big{|}N$. But functions like ${\bf s}_{\eta^{a/t}}(\lambda^{a/t})$ do not lie in $\Lambda^*$. Thus, we must enlarge $\Lambda^*$, as in Eskin-Okounkov \cite{eo2}, to include ${\bf F}_\eta^D(\lambda)$. Hence we define \begin{align*} {\bf p}_k^r(\lambda):&=\zeta_N^{r/2} k![z^k]\,{\bf e}(\lambda,z+\tfrac{2\pi r i}{N}) \\
&= c_k^r+\sum_{i>0} \left[ \zeta_N^{r(\lambda_i-i+1)}\left(\lambda_i-i+\tfrac{1}{2}\right)^k-\zeta_N^{r(1-i)}\left(-i+\tfrac{1}{2}\right)^k\right] \end{align*}
where the constant $c_k^r$ regularizes the constant part of the infinite sum in terms of values of Dirichlet $L$-functions evaluated at $-k$. Alternatively, we may define $c_k^r$ by the series expansion $$\frac{1}{e^{z/2}-\zeta_N^{-r}e^{-z/2}}=\sum_k c_k^r\frac{z^k}{k!}$$ gotten by plugging in $\lambda=\emptyset$. We have suppressed the dependence on the integer $N$ to avoid excessively indexed notation. Note that ${\bf p}_k^0={\bf p}_k$ and in the case $N=2$, the function $\overline{\bf p}_k$ of \cite{eo2} equals our ${\bf p}_k^1$.

\begin{definition} The {\it $N$-cyclotomic extension} of the ring of shifted symmetric polynomials is $$\Lambda_N^*:=\Q(\zeta_N)[{\bf p}_k^r]_{k\geq 1,\,r\in\{0,\dots,N-1\}}.$$ Let $\textrm{wt}\,{\bf p}_k^r=k+\delta_r$ define the weight grading on $\Lambda_N^*$. \end{definition}

Then we have the following useful proposition:

\begin{proposition}\label{iscyclo} Fix an $N$-core $\eta$ and suppose $\lambda^{\textrm{mod }N}=\eta$. Let $t\big{|}N$. The shifted symmetric polynomials ${\bf p}_k(\lambda^{a/t})$ in the $t$-quotients of $\lambda$ are represented by an element of $\Lambda_N^*$. \end{proposition}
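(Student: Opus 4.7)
The plan is to translate the row-length coordinates of the $t$-quotient $\lambda^{a/t}$ into those of $\lambda$ itself using the abacus bijection, and then to extract the relevant residue classes via a discrete Fourier transform over $\Z/N\Z$. Let $\beta_i := \lambda_i - i + \tfrac{1}{2}$ denote the occupied slot positions of $\lambda$, and $\gamma_j := \lambda^{a/t}_j - j + \tfrac{1}{2}$ those of $\lambda^{a/t}$. The abacus of Section 3 identifies the $(01)$-sequence of $\lambda^{a/t}$ with the $a$-th arithmetic-progression subsequence of the $(01)$-sequence of $\lambda$, so there is an affine relation
\begin{align*}
\beta_{\sigma(j)} \;=\; t\,\gamma_j + s_a
\end{align*}
for a bijection $\sigma$ between the indexing sets, where $s_a$ is a shift depending only on $a$, $t$, and the $t$-core $\lambda^{\textrm{mod } t}$. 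Because $t \mid N$, one has $\lambda^{\textrm{mod }t} = \eta^{\textrm{mod }t}$, so $s_a$ is a constant as $\lambda$ varies over partitions with the specified $N$-core $\eta$.

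Next, expand $\gamma_j^k = t^{-k}\sum_\ell \binom{k}{\ell}(-s_a)^{k-\ell}\beta_{\sigma(j)}^\ell$ binomially and sum (regularizing against the vacuum of $\lambda^{a/t}$) to get
\begin{align*}
{\bf p}_k(\lambda^{a/t}) \;=\; C_\eta \;+\; t^{-k}\sum_{\ell=0}^{k}\binom{k}{\ell}(-s_a)^{k-\ell}\,R_\ell(\lambda),
\end{align*}
where $C_\eta$ is an $\eta$-dependent constant and $R_\ell(\lambda)$ is the vacuum-regularized sum $\sum_{i\in I_a}\beta_i^\ell$ over those $i$ with $\beta_i$ in the distinguished class modulo $t$. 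Since $t \mid N$, this class mod $t$ splits into $N/t$ classes mod $N$, and for each class $s$ modulo $N$ the defining formula for ${\bf p}_\ell^r$ together with orthogonality of the characters of $\Z/N\Z$ yields
\begin{align*}
\sum_{\substack{i\geq 1 \\ \lambda_i - i + 1 \equiv s\,(N)}}\!\!\!\!\beta_i^\ell \;-\; \sum_{\substack{i\geq 1 \\ 1-i\equiv s\,(N)}}\!\!(-i+\tfrac{1}{2})^\ell \;=\; \frac{1}{N}\sum_{r=0}^{N-1}\zeta_N^{-rs}\bigl({\bf p}_\ell^r(\lambda)-c_\ell^r\bigr).
\end{align*}
Summing this identity over the $N/t$ classes $s$ comprising our class mod $t$ exhibits $R_\ell(\lambda)$ as a $\Q(\zeta_N)$-linear combination of the ${\bf p}_\ell^r(\lambda)$, up to an additive constant. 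Substituting back presents ${\bf p}_k(\lambda^{a/t})$ as an element of $\Lambda_N^* = \Q(\zeta_N)[{\bf p}_\ell^r]$ of weight at most $k$.

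The main obstacle I anticipate is the bookkeeping of vacuum regularizations. Three distinct ``vacuum'' subtractions appear: the $\gamma_j^k - (-j+\tfrac{1}{2})^k$ in the definition of ${\bf p}_k(\lambda^{a/t})$, the $\zeta_N^{r(1-i)}(-i+\tfrac{1}{2})^\ell$ baked into ${\bf p}_\ell^r$, and the class-restricted vacua produced by the Fourier extraction. Pulling the first through the abacus bijection and comparing with the latter two requires care, since $\sigma$ carries the quotient vacuum not to $\emptyset$ but to the $t$-core $\eta^{\textrm{mod }t}$. All resulting discrepancies are constants depending only on $\eta, a, t, N, k$, and they lie in $\Q(\zeta_N)\subset\Lambda_N^*$; verifying that they assemble correctly is the only step beyond formal symbol-pushing, and in principle may be checked by evaluating both sides at $\lambda = \eta$.
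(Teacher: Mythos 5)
Your proposal is correct and is essentially the paper's own argument: both rest on the abacus identification of the slots of $\lambda^{a/t}$ with an affinely rescaled residue class of slots of $\lambda$ (with shift pinned down by the fixed core), followed by roots-of-unity filtering to express the class-restricted power sums as $\Q(\zeta_N)$-combinations of the ${\bf p}_\ell^r(\lambda)$ plus constants. The only cosmetic difference is that the paper first reduces to $t=N$ via $\Lambda_t^*\subset\Lambda_N^*$, while you treat general $t\mid N$ directly by summing over the $N/t$ residue classes mod $N$; your more careful accounting of the three regularization constants is a welcome expansion of what the paper leaves implicit.
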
 

\begin{proof} It suffices to prove the claim for $t=N$ since $\Lambda_t\subset \Lambda_N$. Using the fact that $1+\zeta_N^r+\dots+\zeta_N^{(N-1)r}=0$ if and only if $N\nmid r$, we can find a linear combination $$\textrm{const}+\sum b_{k,r} {\bf p}_k^r(\lambda)=\sum_{\substack{{\lambda_i-i\,\equiv\, a} \\ \textrm{mod }N}}(\lambda_i-i-a+\tfrac{N}{2})^k-(-i-a+\tfrac{N}{2})^k$$ for any given representative of $a$ mod $N$. Choosing an appropriate $a$, dividing by $N^k$, and adding a constant gives ${\bf p}_k(\lambda^{a/N})$. Here we use that the $N$-core of $\lambda$ is fixed---$\eta$ uniquely determines which representative of $a\textrm{ mod }N$ we must choose to get ${\bf p}_k(\lambda^{a/N})$. \end{proof}

By Proposition \ref{iscyclo}, the functions ${\bf F}^D_\eta(\lambda)$ appearing in Proposition \ref{modify} are represented by an element of $\Lambda_N^*$ (after scaling by appropriate fractional powers of $t$, see Remark \ref{irrational}).

\subsection{The $\eta$-weight} Next, we analyze the unnormalized $\eta$-weight appearing in Proposition \ref{modify}. There is an elegant formula in terms of hook lengths:

\begin{proposition}\label{hooks} Let $N=2$, $3$, $4$, or $6$. Let $\langle a\rangle$ denote the product of the hook lengths of $\lambda$ congruent to $a$ mod $N$. Assume that $\lambda^{{\rm mod }\,N}=\eta$. Then \begin{align*} {\bf \tilde{w}}_{N,\eta}(\lambda) =\frac{\langle 1\rangle \langle N-1\rangle }{\langle 0\rangle^2}\cdot \prod_t {\rm sgn}_t(\lambda)t^{-|\lambda^{{\rm mod }\,t}|/t} .\end{align*}
\end{proposition}

\begin{proof} We prove the formula in the case $N=6$, with the other cases being similar. By definition, we have \begin{align*} {\bf \tilde{w}}_{N,\eta}(\lambda)&=\left(\frac{\dim \lambda}{|\lambda|!}\right)^2\prod_{t=2,3,6}{\bf c}_t(\lambda) \\ &=\frac{|\lambda|!}{\dim \lambda}\prod_{t=2,3,6}\frac{{\rm sgn}_t(\lambda)}{t^{|\lambda|/t}}\prod_{a\,{\rm mod}\,t}\frac{\dim \lambda^{a/t}}{|\lambda^{a/t}|!} \end{align*} The hook length formula states $$\frac{|\lambda|!}{\dim \lambda}=\prod\, \textrm{hook lengths of}\,\lambda.$$ The hook lengths of $\lambda$ which are divisible by $t$ are exactly $t$ times the hook lengths of $t$-quotients $\{\lambda^{a/t}\}$ and there are exactly $(|\lambda|-|\lambda^{{\rm mod}\,t}|)/t$ such hooks. So all hook lengths appear in the numerator of ${\bf \tilde{w}}_{N,\eta}(\lambda)$, whereas the hook lengths visible by $2$, $3$, $6$ each appear in the denominator. Extracting the remaining factors, we conclude
\begin{align*} {\bf \tilde{w}}_{N,\eta}(\lambda)&=\frac{\langle 0\rangle\langle 1\rangle\langle 2\rangle\langle 3\rangle\langle 4\rangle\langle 5\rangle}{\langle 0\rangle\langle 2\rangle\langle 4\rangle\cdot \langle 0\rangle\langle 3\rangle\cdot \langle 0\rangle} \prod_{t=2,3,6} {\rm sgn}_t(\lambda)t^{-|\lambda^{{\rm mod }\,t}|/t} \\ &=\frac{\langle 1\rangle\langle 5\rangle}{\langle 0\rangle^2} \prod_{t=2,3,6} {\rm sgn}_t(\lambda)t^{-|\lambda^{{\rm mod }\,t}|/t}.\end{align*} The fact that one always gets the ratio $\frac{\langle 1\rangle \langle N-1\rangle }{\langle 0\rangle^2}$ is related to the fact that $[\Q(\zeta_N):\Q]\leq 2$ \end{proof} 

Though there are no elliptic orbifolds of orders $N=5$ or $N\geq 7$, Proposition \ref{hooks} invites the following generalization:

\begin{definition}\label{normeta} Let $N\geq 2$. Define the (normalized) {\it $\eta$-weight} $${\bf w}_{N,\eta}(\lambda):=
\left\{
		\begin{array}{ll}
			 \pm u_\eta \displaystyle\frac{\langle 1\rangle \langle N-1\rangle}{\langle 0\rangle^2} & \mbox{if $\lambda^{\textrm{mod }N}=\eta$} \\
			0 & \mbox{otherwise.} 
		\end{array}
	\right.$$
The sign of ${\bf w}_{N,\eta}(\lambda)$, at least for $\eta=\emptyset$, will be clarified in Proposition \ref{pillow1}. The constant $u_\eta$ is the unique one such that ${\bf w}_{N,\eta}(\eta)=1$. Define $${\bf w}_N(\lambda):={\bf w}_{N,\emptyset}(\lambda).$$ \end{definition}

\begin{remark} For $N=2,3,4,6$ the weight ${\bf \tilde{w}}_{N,\emptyset}(\lambda)$ does not require normalization, and already equals ${\bf w}_N(\lambda)$. \end{remark}

\subsection{Generalizations of the Bloch-Okounkov bracket}

To state the main result of this paper, we introduce the notion of a quasimodular form. Let $q=e^{2\pi i\tau}$ with $\tau$ a complex variable in the upper half-plane $\mathbb{H}$.

\begin{definition} The {\it congruence subgroup} $\Gamma(N)$ is the kernel of the reduction mod $N$ homomorphism $SL_2(\Z)\rightarrow SL_2(\Z/N\Z)$. The larger group $\Gamma_1(N)$ are those matrices whose reduction mod $N$ is strictly upper triangular. A holomorphic function $f\,:\,\mathbb{H}\rightarrow \C$ is a {\it modular form of weight $k$} for $\Gamma\subset SL_2(\Z)$ if \begin{enumerate}
\item[(1)] $f(\tfrac{a\tau +b}{c\tau +d})=(c\tau+d)^kf(\tau)$ for all $\twobytwo{a}{b}{c}{d}\in \Gamma$
\item[(2)] $f(\tau)$ has polynomial growth as $\tau$ approaches the cusps of $\mathbb{H}/\Gamma$.
\end{enumerate}
Then, the modular forms for $\Gamma$ form a graded ring $M^*(\Gamma)$, graded by weight. For our purposes, it suffices to define $$QM^*(\Gamma):=M^*(\Gamma)[E_2]$$ where $E_2(\tau):=-\tfrac{1}{24}+\sum \sigma(n)q^n$ is the weight $2$ Eisenstein series. The graded pieces of $QM^*(\Gamma(N))$ are finite-dimensional.

\end{definition}

Our main result, proven in Section \ref{proof}, is:

\begin{theorem}\label{modular1} Let $N\geq 2$. For any ${\bf F}\in \Lambda_N^*$ of pure weight, the $q$-series $$\langle {\bf F}\rangle_{{\bf w}_N}:=\frac{\sum_{\lambda} q^{|\lambda|}{\bf w}_N(\lambda){\bf F}(\lambda)}{\sum_{\lambda} q^{|\lambda|}{\bf w}_N(\lambda)}$$ is the result of substituting $q\mapsto q^N$ into a quasimodular form for $\Gamma_1(N)$ with weight equal to $\textrm{wt}\,\, {\bf F}$. \end{theorem}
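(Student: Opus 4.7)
The strategy is to adapt the Fock-space trace computation of Eskin--Okounkov \cite{eo2} from $N=2$ to general $N$, the key new ingredients being the cyclotomic power sums $\mathbf{p}_k^r$ and the hook-length weight $\mathbf{w}_N$. First, I would encode the numerator $Z(\mathbf{F}) := \sum_\lambda q^{|\lambda|}\mathbf{w}_N(\lambda)\mathbf{F}(\lambda)$ as an operator trace on the charge-zero Fock space $\mathbb{L}$: both $\mathbf{F}$ and $\mathbf{w}_N$ are diagonal in the fermionic basis $\{v_\lambda\}$, the latter supported precisely on partitions with empty $N$-core, so that $Z(\mathbf{F}) = \mathrm{tr}_{\mathbb{L}}(q^H W_N \mathbf{F})$ for a diagonal operator $W_N$ implementing the weight. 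The denominator $Z_N := Z(1)$ is the analogous trace with $\mathbf{F}$ replaced by the identity.

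Next, I would invoke the $N$-quotient isomorphism $\mathbb{L}\cong \mathbb{L}^{\otimes N}\otimes \C[A_{N-1}]$ from Section 3 to collapse the computation onto the tensor factor at $0\in A_{N-1}$, since $W_N$ vanishes off the empty $N$-core sector. Under this identification the energy operator becomes $H = N(H^{(0)}+\cdots+H^{(N-1)})$, explaining the substitution $q\mapsto q^N$ in the theorem. By Proposition \ref{hooks}, $W_N$ reduces (up to a sign absorbed into the operator formalism) to a scalar on $\mathbb{L}^{\otimes N}$, and by Proposition \ref{iscyclo}, each generator $\mathbf{p}_k^r$ of $\Lambda_N^*$ acts on this tensor product as a $\Q(\zeta_N)$-linear combination of single-factor power-sum operators $\mathbf{p}_k$ twisted by characters $\zeta_N^{ra}$. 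Expanding $\mathbf{F}$ as a polynomial in the $\mathbf{p}_k^r$ thus decomposes $Z(\mathbf{F})$ into a finite sum of products of elementary single-factor traces of the form $\mathrm{tr}_{\mathbb{L}}(q^H \mathbf{p}_{k_1}\cdots \mathbf{p}_{k_m})$, decorated by root-of-unity twists.

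The analytic heart of the proof is Eskin--Okounkov's evaluation of such elementary traces via the bosonic basis: writing each $\mathbf{p}_k$ as a Taylor coefficient of the vertex operator $\mathcal{E}(z)$ and applying Wick's theorem recasts the trace as a contour integral whose kernel is built from Jacobi theta functions, producing quasi-modular forms for $SL_2(\Z)$. The root-of-unity twists $\zeta_N^{ra}$ translate into evaluating these kernels at $N$-torsion points of the elliptic curve $\C/(\Z + \tau\Z)$; by standard theory, theta functions with $N$-torsion data are quasi-modular for $\Gamma_1(N)$. Since $Z_N$ itself is such a form (an eta-quotient, in the spirit of \cite{eo2}), the ratio $\langle \mathbf{F}\rangle_{\mathbf{w}_N} = Z(\mathbf{F})/Z_N$ is quasi-modular of the asserted weight.

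The main obstacle is the careful bookkeeping of weights and signs: one must match the formal grading $\mathrm{wt}\,\mathbf{p}_k^r = k+\delta_r$ with the analytic weight of the corresponding theta-derivative expression. The $\delta_r$ correction reflects the presence or absence of a pole in $\mathbf{e}(\lambda, z + 2\pi i r/N)$ at $z=0$: the untwisted generating function has a simple pole, while its shifts by nontrivial $N$-torsion are regular there, and this discrepancy propagates through the boson--fermion correspondence to shift the analytic weight by one. Verifying this matching, together with the precise $\Gamma_1(N)$ transformation law of the resulting theta expressions and the compatibility of the sign in $\mathbf{w}_N$ with the signs coming from the $N$-quotient isomorphism, is where the bulk of the technical work lies.
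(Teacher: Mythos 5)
Your setup (writing the numerator as a Fock-space trace against a diagonal weight operator, then aiming for a bosonic evaluation producing theta functions at $N$-torsion points) points in the right general direction, but the central step of your argument fails. You claim that under the $N$-quotient isomorphism $\mathbb{L}\cong\mathbb{L}^{\otimes N}\otimes\C[A_{N-1}]$ the weight operator ``reduces to a scalar on $\mathbb{L}^{\otimes N}$.'' It does not: by Proposition \ref{hooks}, ${\bf w}_N(\lambda)$ is $\pm\langle 1\rangle\langle N-1\rangle/\langle 0\rangle^2$, and the hook lengths congruent to $\pm 1$ bmod $N$ are \emph{not} determined by the individual $N$-quotients (Proposition \ref{quotient} only converts hook lengths divisible by $N$ into hook lengths of the quotients). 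The weight is a genuinely non-constant function on the empty-$N$-core sector, and it does not factor as a product of functions of the separate tensor factors, since the residues $\pm 1$ record interactions between different runners of the abacus. Consequently your reduction of $Z({\bf F})$ to ``products of elementary single-factor traces'' has nothing to stand on: a general diagonal operator admits no useful expression in the Heisenberg algebra, and the boson--fermion correspondence cannot be applied to it.

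The paper's substitute for this step is its crux, and it is absent from your sketch: Proposition \ref{pillow1} realizes ${\bf w}_N(\lambda)$ (up to the sign checked in Remark \ref{sign}) as the \emph{diagonal matrix element} $\langle v_\lambda\,|\,\mathfrak{W}_N\,|\,v_\lambda\rangle$ of the non-diagonal normal-ordered operator $\mathfrak{W}_N=\exp\bigl(\sum_{n\in\Z\setminus N\Z}\alpha_n/n\bigr)$. Proving this requires real work: the generating function for matrix elements of $\mathfrak{W}_N$ on a truncation of $V$ (Lemma \ref{pillow2}), a block-structure argument showing the diagonal entry vanishes unless $\lambda$ is $N$-decomposable (Lemma \ref{pillow3}), and the Cauchy determinant formula to produce the hook-length ratio. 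Only because $\mathfrak{W}_N$ is an exponential of Heisenberg generators does the trace $\mathrm{tr}_{\mathbb{L}}\,q^H\mathcal{E}_0(\ln x_1)\cdots\mathcal{E}_0(\ln x_n)\mathfrak{W}_N$ factor over the bosonic modes and evaluate to the theta-function kernel $G_N(x,y)$. Your final paragraph on matching $\mathrm{wt}\,{\bf p}_k^r=k+\delta_r$ with the analytic weight is consistent with the paper's treatment, but you would still need the contour-integration lemma for theta ratios, the parity argument keeping coefficients in $\Q(\zeta_N)$ for even $N$, and the polynomial growth bound on $|{\bf w}_N(\lambda)|$ ensuring holomorphy at the cusps --- none of which can even be set up until the weight is encoded operator-theoretically as above.
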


The results of this paper verify Theorem \ref{modular1} only for averages of functions against the weight ${\bf w}_N$, rather than the more general weight ${\bf w}_{N,\eta}$. The case of $\eta\neq \emptyset$ is proven in \cite{engel}, thus verifying what was a conjecture in the first version of this paper:

\begin{theorem}\label{conj} Let ${\bf F}\in \Lambda_N^*$. The $q$-series $$\frac{\sum_{\lambda} q^{|\lambda|/N}{\bf w}_{N,\eta}(\lambda){\bf F}(\lambda)}{\sum_{\lambda} q^{|\lambda|/N}{\bf w}_N(\lambda)}$$ is quasi-modular of level $\Gamma(N)$ for any $N$-core $\eta$. \end{theorem}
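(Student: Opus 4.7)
The strategy is to extend the proof of Theorem \ref{modular1} to nonzero cores by tracking the dependence on the $N$-core via the core-quotient decomposition. Fix an $N$-core $\eta$. Under the isomorphism $\mathbb{L} \xrightarrow{\sim} \mathbb{L}^{\otimes N} \otimes \C[A_{N-1}]$ of Section 3, partitions $\lambda$ with $\lambda^{\textrm{mod }N} = \eta$ correspond to $N$-tuples $(\lambda^{0/N}, \ldots, \lambda^{(N-1)/N})$ of ordinary partitions together with the fixed lattice vector $\vec{x}(\eta) \in A_{N-1}$ associated to $\eta$, with $|\lambda| = N\sum_i |\lambda^{i/N}| + |\eta|$. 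By Propositions \ref{hooks} and \ref{quotient}, the weight ${\bf w}_{N,\eta}(\lambda)$ factors as a product of hook-length weights of the $N$-quotients times a constant depending only on $\eta$. By Proposition \ref{iscyclo}, every ${\bf F} \in \Lambda_N^*$ restricted to partitions with $N$-core $\eta$ is represented by a polynomial in the shifted power sums ${\bf p}_k(\lambda^{i/N})$ of the quotients, whose precise form depends on $\eta$ through the choice of mod-$N$ residue representatives.

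Consequently, after extracting a factor of $q^{|\eta|/N}$, the numerator $\sum_\lambda q^{|\lambda|/N}{\bf w}_{N,\eta}(\lambda){\bf F}(\lambda)$ becomes a sum over $N$-tuples of partitions which is naturally expressed as a trace of an explicit operator on $\mathbb{L}^{\otimes N}$. Following \cite{eo2} as used in the proof of Theorem \ref{modular1}, I would recompute this trace in the bosonic (Heisenberg) basis, where it factorizes across the tensor factors and is evaluated via the adjointness of $\alpha_n$ and $\alpha_{-n}$. For $\eta = \emptyset$ this yields a quasi-modular form for $\Gamma_1(N)$. For general $\eta$, the nonzero lattice vector $\vec{x}(\eta)$ shifts the arguments of the theta-like sums arising in the trace by characteristics proportional to $\eta$; such shifted theta series are quasi-modular for $\Gamma(N)$ but generally no longer for $\Gamma_1(N)$, which exactly matches the claimed level. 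The denominator is the special case $\eta = \emptyset$, ${\bf F} = 1$ of the same construction, and is a nonzero (constant term equal to $1$) quasi-modular form for $\Gamma_1(N) \supset \Gamma(N)$ by Theorem \ref{modular1}, so the quotient is a well-defined element of the quasi-modular ring for $\Gamma(N)$.

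The main obstacle is the trace-recomputation step in the presence of nontrivial characteristics: one must verify that the Jacobi-like identity of \cite{eo2} which rewrites the Fock-space trace in a new basis admits a refinement tracking the restriction of the lattice sum to the coset containing $\vec{x}(\eta)$. A secondary technical point is bookkeeping of the weight grading: one must check that the contributions of the weight-$\delta_r$ generators ${\bf p}_k^r$, the factor $q^{|\eta|/N}$, and the characters of the operator on $\C[A_{N-1}]$ combine to give a quasi-modular form of weight exactly $\textrm{wt}\,{\bf F}$. Together, these steps upgrade Theorem \ref{modular1} to the statement of Theorem \ref{conj}.
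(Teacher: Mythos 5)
First, a point of orientation: the paper does not prove Theorem \ref{conj}. It states explicitly that its results establish only the $\eta=\emptyset$ case (Theorem \ref{modular1}) and that ``the case of $\eta\neq\emptyset$ is proven in future work.'' So there is no proof here to compare yours against; what follows is an assessment of your outline on its own terms, measured against the machinery the paper actually develops for $\eta=\emptyset$.

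Your outline has a genuine gap at its first substantive step. You assert that by Propositions \ref{hooks} and \ref{quotient} the weight ${\bf w}_{N,\eta}(\lambda)$ ``factors as a product of hook-length weights of the $N$-quotients times a constant depending only on $\eta$.'' Proposition \ref{quotient} controls only $\langle 0\rangle$, the hook lengths divisible by $N$: these are $N$ times the hook lengths of the quotients. The factors $\langle 1\rangle$ and $\langle N-1\rangle$ are emphatically not functions of the quotients $\lambda^{i/N}$ alone --- they depend on how the beads of the $N$ abacus subsequences interleave --- and this is precisely why the paper does not compute $\langle{\bf F}\rangle_{{\bf w}_N}$ quotient-by-quotient, but instead realizes ${\bf w}_N$ as the diagonal of the operator $\mathfrak{W}_N$ (Proposition \ref{pillow1}) and evaluates the trace via the boson-fermion decomposition $\mathbb{L}=\bigotimes_{m\geq 1}\mathbb{L}_m$, which is a different decomposition from the core-quotient isomorphism $\mathbb{L}\cong\mathbb{L}^{\otimes N}\otimes\C[A_{N-1}]$ that you invoke. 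Once the factorization claim is removed, your ``explicit operator on $\mathbb{L}^{\otimes N}$'' is never constructed: $\mathfrak{W}_N$ has diagonal supported only on $N$-decomposable partitions, so it cannot see ${\bf w}_{N,\eta}$ for $\eta\neq\emptyset$, and producing an operator whose $q$-trace against the $\mathcal{E}_0(\ln x_i)$ both equals the numerator and is computable in the bosonic basis is exactly the missing content of the theorem --- it is why the paper defers the proof. The concluding claim that the lattice shift by $\vec{x}(\eta)$ degrades the level from $\Gamma_1(N)$ to $\Gamma(N)$ is a plausible heuristic, but it is asserted rather than derived and cannot be checked until the operator and its trace are actually exhibited.
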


\begin{remark} The definition of $\langle \cdot \rangle_{{\bf w}_{N,\eta}}$ is a generalization of the Bloch-Okounkov bracket, denoted $\langle \cdot \rangle_q$ in \cite{eo1}. It is the case $N=1$, $\eta=\emptyset$. It is also a generalization of the bracket denoted $\langle \cdot \rangle_{{\bf w}_2}$ featured in \cite{eo2}, which is the $N=2$, $\eta=\emptyset$ case.   \end{remark}

When applying Theorem \ref{conj} to Hurwitz theory, the factor of $$H_N(\emptyset\,|\,\emptyset,\emptyset,\emptyset)=\sum_{\lambda} q^{|\lambda|}{\bf w}_N(\lambda)$$ in the denominator is a count of orbifold-unramified covers of the elliptic orbifold which factor through the map $E\to X_N$.  Every branched cover $\Sigma\to X_N$ decomposes uniquely into two components $$\Sigma=\Sigma^{\rm unr}\,\sqcup\, \Sigma^\bullet$$ with $\Sigma^{\rm unr}$ consisting of all connected components which factor through an unramified cover of $E$, and $\Sigma^\bullet$ its complement. Since the degree of a cover is the sum of the degrees of these two components, dividing by $H_N(\emptyset\,|\,\emptyset,\emptyset,\emptyset)$ restricts to covers for which every component has ``ramification" in the orbifold sense. Thus Theorem \ref{conj} and Proposition \ref{modify} imply:

\begin{proposition} Let $H_N^{\bullet}(D)$ denote the generating function for covers of $X_N$ with ramification profile $D=\{\nu^i\,|\,\mu^a,\mu^b,\mu^c\}$ such that there is ramification on every connected component of the cover. Then $$H_N^{\bullet}(D)=\sum_{N\textrm{-cores }\eta} \langle {\bf F}_\eta^D\rangle_{{\bf w}_{N,\eta}}$$ is in the ring $QM^*(\Gamma(N))$ of quasi-modular forms for $\Gamma(N)$ and has weight bounded in terms of $D$.\end{proposition}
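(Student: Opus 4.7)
The plan is to combine the standard exponential formula of Hurwitz theory with Proposition \ref{modify} and Theorem \ref{conj}. A disconnected cover with profile $D$ decomposes uniquely into its ramified components, whose profiles sum to $D$, together with an unordered multiset of orbifold-unramified components; the latter is enumerated by $H_N(\emptyset\,|\,\emptyset,\emptyset,\emptyset)$. This factorization gives
$$H_N(D) = H_N^{ram}(D)\cdot H_N(\emptyset\,|\,\emptyset,\emptyset,\emptyset),$$
so that $H_N^{ram}(D) = H_N(D)/H_N(\emptyset\,|\,\emptyset,\emptyset,\emptyset)$.

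Next I would evaluate both sides using Proposition \ref{modify}. In the denominator, $|\mu^t|=0$ forces $\eta^t=\emptyset$, which collapses the sum to the $\eta=\emptyset$ term with ${\bf F}_\emptyset^\emptyset\equiv 1$ and $\tilde{\bf w}_{N,\emptyset}={\bf w}_N$, yielding $\sum_\lambda q^{|\lambda|/N}{\bf w}_N(\lambda)$ --- precisely the normalizing denominator of Theorem \ref{conj}. The numerator is the full sum over $N$-cores $\eta$. Since $\tilde{\bf w}_{N,\eta}$ differs from ${\bf w}_{N,\eta}$ only by a nonzero scalar (the sign and normalizer $u_\eta$) which may be absorbed into ${\bf F}_\eta^D$, the ratio identifies term-by-term with $\langle{\bf F}_\eta^D\rangle_{{\bf w}_{N,\eta}}$, giving the claimed formula $H_N^{ram}(D)=\sum_\eta\langle{\bf F}_\eta^D\rangle_{{\bf w}_{N,\eta}}$. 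The sum is finite because ${\bf F}_\eta^D$ vanishes outside $|\eta|\leq|\mu^c|$.

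Finally I would invoke Theorem \ref{conj}. Proposition \ref{iscyclo} shows that each shifted-Schur factor ${\bf s}_{(\eta^t)^{i/t}}(\lambda^{i/t})$ is represented on partitions of core $\eta$ by an element of $\Lambda_N^*$, and the Kerov--Olshanski factors ${\bf f}_{\nu^i}$ lie in $\Lambda^*\subset\Lambda_N^*$; hence each ${\bf F}_\eta^D\in\Lambda_N^*$. Decomposing ${\bf F}_\eta^D$ into its pure-weight components and applying Theorem \ref{conj} to each, every summand is a quasi-modular form for $\Gamma(N)$, hence so is the finite sum. The weight of ${\bf F}_\eta^D$ is bounded by the total weight contribution of the ${\bf f}_{\nu^i}$ (at most $|\nu^i|$ each) and of the shifted-Schur factors (controlled by the $|\mu^t|$), hence in terms of $D$. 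The principal obstacle is carefully tracking the weight grading on $\Lambda_N^*$ --- which mixes degree in ${\bf p}_k^r$ with the cyclotomic shift $\delta_r$ --- and ensuring that the scalar absorbed into ${\bf F}_\eta^D$ does not spoil the pure-weight decomposition used by Theorem \ref{conj}; both are routine given the explicit formulas in Section 4.
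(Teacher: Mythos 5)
Your proposal is correct and follows essentially the same route as the paper: the paper likewise observes that dividing $H_N(D)$ (as computed by Proposition \ref{modify}) by the orbifold-unramified count $H_N(\emptyset\,|\,\emptyset,\emptyset,\emptyset)=\sum_\lambda q^{|\lambda|/N}{\bf w}_N(\lambda)$ restricts to covers with ramification on every component, and then cites Theorem \ref{conj} together with Proposition \ref{iscyclo} (placing ${\bf F}_\eta^D$ in $\Lambda_N^*$) to conclude quasi-modularity with weight bounded by $D$. Your extra remarks about absorbing the normalization $u_\eta$ and tracking the weight grading are correct fillings-in of details the paper leaves implicit.
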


To further pass from covers with ramification on each component to connected covers, there is a standard solution via M\"obius inversion, see e.g. Section 2.3 of \cite{eo1} or Section 5.5 of \cite{goujard}. Essentially, one applies the inclusion-exclusion principle to all the ways in which the curvature profile can split up on the connected components. So we have:

\begin{corollary} The generating function $H_N^{\circ}(D)$ for connected covers of $X_N$ with ramification profile $D$ lies in $QM^*(\Gamma(N))$.  \end{corollary}

By the discussion following Definition \ref{general}, we conclude quasimodularity of generating functions of tilings:

\begin{corollary}\label{modular2} The following generating functions of surface tilings with specified non-zero curvatures lie in a ring of quasimodular forms:  \begin{table}[H]
\begin{tabular}{| c | c | c | c |} \hline 
Tile
& $\,\,\,$Level$\,\,\,$
\\
\hline
triangle
& $\Gamma_1(6)$
\\
hexagon {\rm (}$6$-duck tile{\rm )}
& $\Gamma_1(6)$
\\
square {\rm (}$4$-duck tile{\rm )}
& $\Gamma_1(4)$
\\
vertex bicolored hexagon {\rm (}$3$-duck tile{\rm )}
& $\Gamma_1(3)$
\\
edge bicolored quadrilateral {\rm (}$2$-duck tile{\rm )}
& $\Gamma_1(2)$
\\
asymmetric quadrilateral {\rm (}$1$-duck tile{\rm )}
& $SL_2(\Z)$
\\
\hline
\end{tabular}
\end{table}
\end{corollary}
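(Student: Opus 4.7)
The plan is to combine the bijection of Proposition \ref{zero} with the $\eta$-sum decomposition of Proposition \ref{modify}, apply Theorem \ref{modular1} (or, for the triangle row, its strengthening Theorem \ref{conj}) to each surviving term, and pass from disconnected to connected counts via M\"obius inversion.

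First, by Proposition \ref{zero} and the formulae for $h_N^{dis}$ and $\triangle^{dis}$ listed just after Definition \ref{general}, each disconnected tile generating function equals $H_N(D)$ for an explicit ramification profile $D$ in which only one of $\mu^a,\mu^b,\mu^c$ (or $\mu^d$, when $N=2$) is non-empty. Proposition \ref{modify} then expresses $H_N(D)$ as a finite sum, indexed by $N$-cores $\eta$, of pairings $\sum_\lambda q^{|\lambda|/N}{\bf \tilde{w}}_{N,\eta}(\lambda){\bf F}^D_\eta(\lambda)$, with ${\bf F}^D_\eta\in\Lambda_N^*$ of bounded weight by Proposition \ref{iscyclo}.

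For each of the five non-triangle rows, at least one empty $\mu^t$ sits at an orbifold point whose order $t$ equals $N$: then $|\eta^t|=0$ forces $\eta^t=\emptyset$, and the constraint $(\eta^t)^{\textrm{mod}\,t}=\eta^{\textrm{mod}\,t}=\eta$ (using $t=N$ and that $\eta$ is an $N$-core) collapses the sum to $\eta=\emptyset$. The remaining $\eta=\emptyset$ term factors as $(\sum_\lambda q^{|\lambda|/N}{\bf w}_N(\lambda))\cdot\langle{\bf F}^D_\emptyset\rangle_{{\bf w}_N}$, since ${\bf \tilde{w}}_{N,\emptyset}={\bf w}_N$ for these $N$. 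Applying Theorem \ref{modular1} to each pure-weight component of ${\bf F}^D_\emptyset$ and multiplying by the vacuum partition function (itself quasi-modular) yields, after $q\mapsto q^N$, a quasi-modular form for $\Gamma_1(N)$ of bounded weight. The $N=1$ row recovers the original theorem of Eskin-Okounkov \cite{eo1}.

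The triangle row is different: $\mu$ sits at the order-$6$ point, while the empty $\mu$'s lie at orders $2$ and $3$, both strictly less than $N=6$. Non-trivial $6$-cores (for instance $(3,2,1)$) satisfy all the divisibility constraints, so the $\eta$-sum does not collapse to $\eta=\emptyset$, and one must invoke the $\eta\neq\emptyset$ strengthening Theorem \ref{conj}. This is the main obstacle: Theorem \ref{conj} is deferred to future work, and additional care is needed to upgrade the claimed level from $\Gamma(N)$ to the $\Gamma_1(6)$ recorded in the table. Once quasi-modularity of all disconnected generating functions is established, passage to the connected counts $h_N$ and $\triangle$ is by the standard inclusion-exclusion over splittings of $\vec{\kappa}$ into connected components (cf.\ \cite[Section 2.3]{eo1} or \cite{goujard}), which expresses each connected generating function as a finite $\Q$-linear combination of products of disconnected ones; hence both quasi-modularity and the level are preserved.
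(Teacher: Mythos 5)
Your overall architecture is the paper's: Proposition \ref{zero} identifies tilings with Hurwitz covers, Proposition \ref{modify} gives the $\eta$-indexed decomposition, the $\eta$-sum collapses to $\eta=\emptyset$ exactly when an empty $\mu^t$ sits at an orbifold point of order $t=N$ (equivalently, when $\mu^c=\emptyset$, since $c=N$ in every case), Theorem \ref{modular1} handles those rows, and the triangle row alone requires the $\eta\neq\emptyset$ statement of Theorem \ref{conj} deferred to future work. All of that matches the paper and is the right way to see why the triangle is the exceptional case.

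There is, however, a genuine gap in your passage from disconnected to connected counts. You assert that the vacuum partition function $\sum_\lambda q^{|\lambda|/N}{\bf w}_N(\lambda)=\mathrm{tr}_{\mathbb{L}}\,q^{H/N}\mathfrak{W}_N$ is ``itself quasi-modular'' and multiply it back in to conclude that the disconnected generating function $H_N(D)$ is quasimodular. It is not: the trace computation of Section 5 shows this partition function equals $\prod_{m\geq 1}(1-q^{m})^{-1/N}$ (after the substitution $q\mapsto q^{1/N}$, an $N$-th root of an $\eta$-type product), which has non-integral weight and does not lie in $QM^*(\Gamma_1(N))=M^*(\Gamma_1(N))[E_2]$. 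Consequently $H_N(D)$ itself is \emph{not} quasimodular, and your final step --- expressing the connected count as ``a finite $\Q$-linear combination of products of disconnected ones'' --- also fails: because disconnected covers may contain orbifold-unramified components, the exponential formula reads $\sum_D H_N(D)z^D = H_N(\emptyset)\cdot\exp\bigl(\sum_{D\neq\emptyset}H_N^{con}(D)z^D\bigr)$, so inverting it expresses $H_N^{con}(D)$ as a polynomial in the \emph{ratios} $H_N(D')/H_N(\emptyset)$, with unavoidable negative powers of $H_N(\emptyset)$. This is precisely why the paper introduces $H_N^{ram}(D)=\langle {\bf F}^D_\emptyset\rangle_{{\bf w}_N}$ --- the bracket already divides by the partition function, is quasimodular by Theorem \ref{modular1}, and is the correct object to feed into the M\"obius inversion. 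The fix is simply to never multiply the partition function back in: work with the normalized brackets throughout, as the paper does. (Your remark about upgrading $\Gamma(N)$ to $\Gamma_1(N)$ for the triangle row is resolved in the paper by noting the generating function is a series in integer powers of $q$, hence invariant under $\tau\mapsto\tau+1$.)
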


All but the first case follows from Theorem \ref{modular1}, with only the first case requiring Theorem \ref{conj}. The level, even for triangulations, is $\Gamma_1(6)$ because the generating function is expressible in integer powers of $q$, rather than $q^{1/N}$. So the generating function is modular with respect to $\tau\mapsto \tau+1$.

\begin{remark} Using arithmetic techniques, P. Smillie and the author \cite{engsmi} showed that the appropriate generating function for positive curvature triangulations is in fact a modular form for $SL_2(\Z)$ of pure weight $10$. In particular, the above results are not necessarily optimal---for instance, Corollary \ref{modular2} only says that this generating function is quasi-modular of mixed weight for $\Gamma_1(6)$ with a large weight bound. \end{remark}

The vector space $QM^{\leq{\rm wt}}(\Gamma_1(N))$ is finite-dimensional and even has explicit bases. Thus it is possible in principle to compute the generating function for any of the above tiling problems from the knowledge of a finite number of $q$-coefficients.

The following generating function encodes all ${\bf w}_N$ brackets:

\begin{definition} The {\it $n$-point function} is $$F_N(z_1,\dots,z_n):=\left\langle \,\prod_{i=1}^n {\bf e}(\lambda,\ln z_i)\right\rangle_{{\bf w}_N}.$$ \end{definition}

The $n$-point functions determine $\left\langle \prod {\bf p}^{r_i}_{k_i}\right\rangle_{{\bf w}_N}$ for any $k_i\in\N$ and $r_i\textrm{ mod }N$. Up to a constant, these brackets are coefficients in the Taylor expansion of $F_N(e^{z_1},\dots,e^{z_n})$ about the point $(z_1,\dots,z_n)=(\tfrac{2\pi i r_1}{N},\dots,\tfrac{2\pi i r_n}{N}).$

\section{Fock space and vertex operators}\label{proof}

 In this section, we prove Theorem \ref{modular1} using the Fock space formalism.

\subsection{More operators on Fock space} In the previous section, we have defined a number of functions on partitions, in particular the functions ${\bf p}_k^r(\lambda)$ which freely generate the cyclotomic enlargement $\Lambda_N^*$ of the ring of shifted-symmetric functions. We now promote these functions to operators on Fock space by defining $$\mathscr{P}^r_k v_\lambda={\bf p}_k^r(\lambda)v_\lambda$$ as an operator acting diagonally in the $v_\lambda$ basis of the charge zero subspace $\mathbb{L}=\Lambda^{\infty/2}_0V$. These operators are unbounded, but are bounded on the $v_\lambda$ with $|\lambda|$ fixed. As for ${\bf p}_k^r(\lambda)$, the operators $\mathscr{P}_k^r$ are themselves Taylor coefficients of an operator which depends on an analytic variable $z$.

Define for any function $f:\R\rightarrow \R$ and an integer $k$ an operator on $V$ by $\underline{i}\rightarrow f(i)\,\underline{i-k}$. This induces an operator on Fock space \begin{align*}\mathscr{E}_k[f]\,:\,\underline{i_1}\wedge \underline{i_2}\wedge \dots\,\mapsto\, & f(i_1)\cdot \underline{i_1-k}\wedge \underline{i_2}\wedge \dots+ \\ &f(i_2)\cdot \underline{i_1}\wedge \underline{i_2-k}\wedge \dots+ \dots \end{align*} which must regularized when $k=0$. When $f$ is a quasi-polynomial of period $N$, one regularizes $\mathcal{E}_0[f]$ via $L$-function values, whereas when $f$ is an exponential function, one can regularize via the geometric series.

Define $\mathcal{E}_k(z):=\mathcal{E}_k[e^{zx}]$ where $z$ is a formal variable. Then $$\mathcal{E}_0(z)v_\lambda = {\bf e}(\lambda,z)v_\lambda$$ acts diagonally by the eigenvalue ${\bf e}(\lambda,z)=\sum e^{(\lambda_i-i+\frac{1}{2})z}$ depending analytically on $z$. It follows from the definition of ${\bf p}_k^r$ that $$\mathscr{P}^r_k=\zeta_N^{r/2}k![z^k]\mathcal{E}_0(z+\tfrac{2\pi ri}{N}).$$ Recall that the energy operator acts by $H v_\lambda:=|\lambda|v_\lambda$. It satisfies $$H=\mathscr{P}^0_1+\frac{1}{24}.$$ We call $\mathcal{E}_k(z)$ a {\it vertex operator}. The vertex operators also encode the action of the Heisenberg algebra on Fock space: $$\alpha_n=\mathcal{E}_n(0).$$ We say an operator expressed in terms of the $\alpha_n$ is {\it normal-ordered} if all raising operators, i.e. those involving $\alpha_n$ for $n$ negative, appear to the left of all lowering operators, i.e. those involving $\alpha_n$ for $n$ positive.

\begin{proposition}\label{pillow1} Let $N\geq 2$. Define the normal-ordered operator $$\mathfrak{W}_N:=\exp\left(\sum_{n\in \Z \backslash N\Z}\!\frac{\alpha_n}{n}\right).$$ Then the diagonal entries of $\mathfrak{W}_N$ are $$\langle v_\lambda\,\big{|}\, \mathfrak{W}_N\,\big{|}\,v_\lambda\rangle={
	\left\{
		\begin{array}{ll}
			\pm \displaystyle\frac{\langle 1\rangle\langle N-1\rangle}{\langle 0\rangle^2} & \textrm{if }\lambda\textrm{ is }N\textrm{-decomposable}  \\
			0 & \mbox{otherwise}
		\end{array}
	\right.
}$$ where $\langle a\rangle$ is the product of the hooklengths of $\lambda$ congruent to $a$ mod $N$. \end{proposition}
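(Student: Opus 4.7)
My approach is to reduce the matrix element to a sum over intermediate states via the bosonic realization, then evaluate that sum using the $N$-quotient factorization of Schur functions and the hook-length formula.

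Under the bosonic isomorphism $\mathbb{L}\cong \Q[p_1,p_2,\dots]$ sending $v_\lambda$ to $s_\lambda(p)$, the raising part $A_+ = -\sum_{N\nmid n>0}\alpha_{-n}/n$ becomes multiplication by $\exp(-\sum_{N\nmid n>0}p_n/n)$, while the lowering part $A_- = \sum_{N\nmid n>0}\alpha_n/n$ acts by the shift $p_n\mapsto p_n + [N\nmid n]$. Since $A_+^* = -A_-$, one has
\[
\langle v_\lambda\,|\,\mathfrak{W}_N\,|\,v_\lambda\rangle = \langle e^{-A_-}v_\lambda,\, e^{A_-}v_\lambda\rangle = \sum_{\mu\subseteq\lambda} s_{\lambda/\mu}[-\mathbb{1}_N]\, s_{\lambda/\mu}[\mathbb{1}_N],
\]
where $\mathbb{1}_N$ is the plethystic alphabet with $p_n[\mathbb{1}_N] = [N\nmid n]$ and the second equality uses the skew Cauchy identity $s_\lambda(p+c) = \sum_\mu s_{\lambda/\mu}(c)\,s_\mu(p)$. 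The identity $s_{\lambda/\mu}[-X] = (-1)^{|\lambda|-|\mu|}s_{\lambda'/\mu'}[X]$ further rewrites this as $\sum_\mu (-1)^{|\lambda|-|\mu|}s_{\lambda'/\mu'}[\mathbb{1}_N]\,s_{\lambda/\mu}[\mathbb{1}_N]$.

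Next, I would exploit the factorization of Schur functions at $\mathbb{1}_N$. Writing $\mathbb{1}_N = \{1\} - \tfrac{1}{N}\{1,\zeta_N,\dots,\zeta_N^{N-1}\}$ plethystically and using that $h_k$ at the $N$-th roots of unity is supported on multiples of $N$, the Littlewood--Macdonald $N$-quotient factorization shows $s_{\lambda/\mu}[\mathbb{1}_N]$ vanishes unless $\lambda$ and $\mu$ share the same $N$-core, and otherwise factorizes as a product over the $N$-quotients of $\lambda/\mu$ of single-quotient values. This delivers the vanishing of the matrix element when $\lambda$ is not $N$-decomposable: nonzero contributions come only from $\mu$ with $\mu^{\text{mod }N} = \lambda^{\text{mod }N}$, and a sign-reversing involution on these (coming from the $A_{N-1}$ lattice translation between the $N$-core and the empty core) makes them cancel when $\lambda^{\text{mod }N}\neq\emptyset$.

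For $N$-decomposable $\lambda$, the sum reduces via the $N$-quotient reindexing to a product of $N$ one-quotient Cauchy sums. Each evaluates to $\dim\lambda^{i/N}/|\lambda^{i/N}|!$ times a contribution from the single-element $\{1\}$-part of $\mathbb{1}_N$, and applying Proposition~\ref{quotient} converts the quotient dimensions back into hook lengths. The final simplification gives $\pm\langle 1\rangle\langle N-1\rangle/\langle 0\rangle^2$: the denominator $\langle 0\rangle^2$ comes from $\prod_i(\dim\lambda^{i/N}/|\lambda^{i/N}|!)^2$ converted to hook lengths divisible by $N$, while the numerator $\langle 1\rangle\langle N-1\rangle$ records the two surviving residue classes from the plethystic pairing of $\mathbb{1}_N$ with itself.

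The main difficulty is the explicit combinatorial matching in the last step: showing that exactly the residue classes $\pm 1\pmod{N}$ survive the $N$-quotient factorization rather than some more elaborate subset. This mirrors the remark after Proposition~\ref{hooks} that the surviving classes form a Galois orbit under $\zeta_N\leftrightarrow\zeta_N^{-1}$, and generalizes the $N=2$ calculation of \cite{eo2}, which serves as the template. Verifying the sign (which is not asserted explicitly in the statement, only that the answer has absolute value the stated ratio) is straightforward once the main identity is established, as it tracks the parity of sea reorderings in the skew-Schur evaluations.
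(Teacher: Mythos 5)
Your opening reduction is sound and genuinely different from the paper's route: writing $\mathfrak{W}_N=e^{A_+}e^{A_-}$ with $A_+^*=-A_-$ and expanding by the skew Cauchy identity does give
$\langle v_\lambda\,|\,\mathfrak{W}_N\,|\,v_\lambda\rangle=\sum_{\mu\subseteq\lambda}s_{\lambda/\mu}[-A]\,s_{\lambda/\mu}[A]$,
where $A$ is your alphabet with $p_n[A]=1$ for $N\nmid n$ and $p_n[A]=0$ for $N\mid n$. (The paper instead truncates to a finite single-particle space, reads off the matrix entries $\langle e_k\,|\,\mathfrak{W}_N\,|\,e_l\rangle$ from the generating function $\frac{1}{1-xy}\cdot\frac{1-x}{(1-x^N)^{1/N}}\cdot\frac{(1-y^N)^{1/N}}{1-y}$, proves the vanishing by a block-rank argument, and evaluates the surviving determinant with the Cauchy determinant formula.)

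The central combinatorial step, however, fails. The $N$-quotient factorization of (skew) Schur functions applies to specializations supported on the power sums $p_{Nk}$, i.e.\ to alphabets with $p_n=0$ for $N\nmid n$; your alphabet $A$ has exactly the complementary support. Consequently the claim that $s_{\lambda/\mu}[A]$ vanishes unless $\mu$ and $\lambda$ share an $N$-core is false: for $N=2$, $\lambda=(1)$, $\mu=\emptyset$ one has $s_{(1)}[A]=p_1[A]=1\neq 0$, although the $2$-cores are $(1)$ and $\emptyset$. In that example the matrix element does vanish, but only because the $\mu=\emptyset$ term contributes $-1$ and the $\mu=(1)$ term contributes $+1$; the cancellation occurs across terms with \emph{different} cores, not by termwise vanishing followed by an involution on core-matching $\mu$ as you assert. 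Since both your vanishing argument and your evaluation for $N$-decomposable $\lambda$ (restricting the sum to core-matching $\mu$ and factoring each term through the $N$-quotients) rest on this misapplied factorization, neither goes through as written. Finally, the step you yourself flag as ``the main difficulty''---showing that exactly the residue classes $\pm 1 \bmod N$ survive, which is where the ratio $\langle 1\rangle\langle N-1\rangle/\langle 0\rangle^2$ actually comes from---is not carried out, so even after repairing the factorization the argument would remain incomplete at precisely the point where the stated formula is produced.
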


\begin{proof} The proof is analogous to the $N=2$ case in \cite{eo2}, where this operator was dubbed the pillowcase operator. Choose $\ell$ such that $\lambda_i=0$ for all $i>N\ell$. Since $\mathfrak{W}_N$ is the product of a lower unitriangular and upper unitriangular operator, $\langle v_\lambda\,\big{|}\, \mathfrak{W}_N\,\big{|}\,v_\lambda\rangle$ can be computed in the wedge product of a finite-dimensional truncation $V^{[\ell]}$ with basis $$e_k=\underline{-N\ell+k+\tfrac{1}{2}},\,\,\,\,\,\,\,\,\,k=0,\,\dots,\,\lambda_1+N\ell-1.$$ The matrix elements of $\mathfrak{W}_N$ are then determinants of a minor of matrix elements of the action on $V^{[\ell]}$ itself. These matrix elements are computed in the following lemma:

\begin{lemma}\label{pillow2} Let $f(x,y)$ be the generating function for matrix entries of $\mathfrak{W}_N$ acting on $V^{[\ell]}$. Then $$f(x,y):=\sum_{k,l}\,\langle e_k\,\big{|}\,\mathfrak{W}_N\,\big{|}\,e_l\rangle x^ky^l=\frac{1}{1-xy}\cdot \frac{1-x}{(1-x^N)^{1/N}}\cdot \frac{(1-y^N)^{1/N}}{1-y} .$$ More explicitly, we have for $N\geq 3$ $$\frac{\langle e_k\,\big{|}\,\mathfrak{W}_N\,\big{|}\,e_l\rangle}{\mathfrak{b}(k)\mathfrak{c}(l)}=\threepartdef{0}{k\not\equiv 0\textrm{ and }k\not\equiv l+1\textrm{ mod }N}{1}{k\equiv 0\textrm{ and }k\not\equiv l+1\textrm{ mod }N}{(l-k)^{-1}}{k\equiv l+1\textrm{ mod }N}$$ where $$\mathfrak{b}(k):=\prod_{r\leq k} \frac{\{r\equiv -1\textrm{ mod }N\}}{\{r\equiv 0\textrm{ mod }N\}}\,\,\,\,\,\,\,\,\textrm{and}\,\,\,\,\,\,\,\,\mathfrak{c}(l):=\prod_{r\leq l} \frac{\{r\equiv 1\textrm{ mod }N\}}{\{r\equiv 0\textrm{ mod }N}.$$ \end{lemma}

\begin{proof} On $V^{[\ell]}$, the action of the raising and lowering operators is $$\alpha_n\cdot e_k=\alpha_1^n\cdot e_k=\twopartdefotherwise{e_{k-n}}{k-n\in\{0,\dots,\lambda_1+N\ell-1\}}{0}$$ Define the operator $$\mathfrak{B}_N:=\exp\left(\sum_{n\in \N\backslash N\N} \frac{\alpha_n}{n}\right)$$ and observe that $$\exp\left(\sum_{n\in\N\backslash N\N}\frac{x^n}{n}\right)=\frac{(1-x^N)^{1/N}}{1-x}.$$ Since $\alpha_n$ and $\alpha_{-n}$ are adjoint, $\mathfrak{W}_N=(\mathfrak{B}_N^*)^{-1}\mathfrak{B}_N$. By expanding $\mathfrak{B}_N^{-1}e_k$ and $\mathfrak{B}_N e_l$ in the orthonormal basis $\{e_m\}$ we compute that \begin{align*}\langle \mathfrak{B}_N^{-1}e_k\,\big{|}\,\mathfrak{B}_Ne_l\rangle & =\sum_{m\geq 0} \langle \mathfrak{B}_N^{-1}e_k \,\big{|}\,e_m \rangle \cdot \langle e_m\,\big{|}\,\mathfrak{B}_Ne_l\rangle \\ &=\sum_{m\geq 0}\, [x^{k-m}y^{l-m}] \frac{1-x}{(1-x^N)^{1/N}}\cdot \frac{(1-y^N)^{1/N}}{1-y} \\ &= [x^ky^l]\frac{1}{1-xy}\cdot \frac{1-x}{(1-x^N)^{1/N}}\cdot \frac{(1-y^N)^{1/N}}{1-y} \end{align*} from which the first claim follows. Expanding via the binomial theorem and geometric series, we have $$f(x,y)=\sum_{a,b,c\geq 0}(-1)^{a+b}{-1/N\choose a}{1/N\choose b}(x^{aN}-x^{aN+c+1})y^{bN+c}.$$ Induction or manipulation of binomial coefficients verifies the formula for the $x^ky^l$ coefficient of the above expression. 
\end{proof}

\begin{lemma}\label{pillow3} We have $\langle v_\lambda\,\big{|}\, \mathfrak{W}_N\,\big{|}\, v_\lambda\rangle=0$ unless $\lambda$ is $N$-decomposable. \end{lemma}

\begin{proof} By general properties of wedge products, we can evaluate $\langle v_\lambda \,\big{|}\, \mathfrak{W}_N\,\big{|}\, v_\lambda\rangle$ by taking the determinant of $$\langle e_k\,\big{|}\,\mathfrak{W}_N\,\big{|}\, e_l\rangle_{k,l\in\{\lambda_i-i+N\ell\}}.$$ This determinant is necessarily stable upon appending zeroes to $\lambda$. We do so until $N$ divides $\ell(\lambda)$, say $\ell(\lambda)=Nd$. By Lemma \ref{pillow2}, the above matrix has the form of an $Nd \times Nd$ block matrix with many zero blocks: $$\begin{pmatrix} * &  *  & \cdots & * & * \\ * & 0 & \cdots & 0 & 0 \\ 0 & * & \cdots & 0 & 0 \\ \vdots  & \vdots & & \vdots & \vdots \\ 0  & 0 & \cdots & * & 0\end{pmatrix}.$$ We index the blocks starting from zero so that the $(r,s)$-block consists of the entries such that $k\equiv r\textrm{ mod }N$ and $l\equiv s\textrm{ mod }N$. Observe that this requires reordering the $e_k$.

Let $n_r$ be the width of the $r$th row. We first remark that for $0\leq r\leq N-2$, the determinant vanishes unless $$n_r-1 \leq n_{r+1} \leq n_r.$$ The upper bound is immediate because the $(r+1,r)$-block is the only non-zero entry in the $(r+1)$th row. On the other hand, the only other non-zero entry in the $r$th column is the $(0,r)$-block, which has rank one. This gives the lower bound. Similar logic applied to the $(0,N-1)$-block implies that $$n_{N-1}\leq n_0\leq n_{N-1}+1.$$ Next, observe there is at most one $r\in\{0,\dots,N-2\}$ such that $n_r>n_{r+1}$ as otherwise the $(0,N-1)$ block cannot have the correct proportions. Therefore $$Nn_0\geq \sum n_r \geq Nn_0-(N-1).$$ To finish the proof, we observe that $\sum n_r=Nd$ is divisible by $N$ and thus $n_r=n_{r+1}$ for all $i$. In particular, the sets $\{i\,\big{|}\,\lambda_i-i\equiv r\textrm{ mod }N\}$ have equal size for all $r$ and thus $\lambda$ is $N$-decomposable. \end{proof}

Returning to the proof of Proposition \ref{pillow1}, we have verified the formula when $\lambda$ is not $N$-decomposable. So suppose $\lambda$ is $N$-decomposable. Then the determinant of the matrix in Lemma \ref{pillow3} factors into the product of the determinants of the $(r+1,r)$-blocks. By Lemma \ref{pillow2}, \begin{align*}\langle v_\lambda\,\big{|}\, \mathfrak{W}_N\,\big{|}\, v_\lambda\rangle&=\prod_{i=1}^{N\ell} \mathfrak{b}(\lambda_i-i+N\ell)\mathfrak{c}(\lambda_i-i+N\ell) \cdot \\ &\hspace{70pt}\prod_{r\textrm{ mod }N} \!\!\!\det((\lambda_l-\lambda_k+k-l)^{-1})_{\lambda_k-k\equiv r+1}^{\lambda_l -l\equiv r}.\end{align*}

We now apply the Cauchy determinant formula $$\det\left(\frac{1}{x_k-y_l}\right)=\frac{\prod_{k<l} (x_k-x_l)(y_l-y_k)}{\prod_{k,l} (x_k-y_l)}$$ to conclude that
 \begin{align*} \langle v_\lambda\,\big{|}\,\mathfrak{W}_N\,\big{|}\,v_\lambda\rangle&=\pm \!\!\!\!\!\!\!\!\!\!\!\!\prod_{\hspace{15pt}r\leq \lambda_i-i+N\ell}\!\!\!\!\!\!\!\!\!\!\!\!\!\prod\,\,\,\frac{(r\equiv \pm 1\textrm{ mod }N)}{(r\equiv 0\textrm{ mod }N)^2} \frac{\prod\{\lambda_i-\lambda_j+j-i\equiv 0\textrm{ mod }N\}^2}{\prod\{\lambda_i-\lambda_j+j-i\equiv \pm 1\textrm{ mod }N\}}.\end{align*} Finally, we have $$\#\{i\,\big{|}\,r\leq \lambda_i-i+N\ell\}-\#\{(i,j)\,\big{|}\,\lambda_i-\lambda_j+j-i=r\}=\#\{r\textrm{-hooks}\}$$ from which the proposition follows. \end{proof}

\begin{remark}\label{sign} To continue our arguments, we ought to verify that the sign in Proposition \ref{pillow1} agrees with the sign of ${\bf w}_N(\lambda)$ as determined by Proposition \ref{hooks} for $N=1,2,3,4,6$. This can be proven by induction on $|\lambda|$---one removes an $N$-rim hook and checks that both quantities change by the same sign. As noted in Definition \ref{normeta}, the sign of ${\bf w}_N(\lambda)$ was left ambiguous for values of $N\neq 1,2,3,4,6$. We may define the sign so that Proposition \ref{pillow1} remains true for all values of $N$.  \end{remark}

\subsection{Determination of the $n$-point function} Proposition \ref{pillow1} and Remark \ref{sign} imply the following formula for the $n$-point function:
\begin{align}\label{npoint1} F_N(x_1,\dots,x_n)=\frac{tr_{\mathbb{L}}\,q^H\mathcal{E}_0(\ln x_1)\dots\mathcal{E}_0(\ln x_n)\, \mathfrak{W}_N}{tr_{\mathbb{L}}\,q^H\mathfrak{W}_N}.\end{align} The trace of this product of operators on $\mathbb{L}$ should be thought of as valued in the power series ring $\C[[q]]$, with the $q^n$ coefficient equal to the contribution from the energy eigenspace $H=n$.  In \cite{eo2}, many details of the evaluation of this trace are left to the reader, and so we somewhat expand their arguments. See also Section 3 of \cite{milas} for an analogous treatment of the $N=1$ case. The key tool is the {\it boson-fermion correspondence}, which can be phrased as the decomposition of the charge zero subspace of Fock space with respect to the action of $\alpha_{-m}$ for all $m> 0$: \begin{align*}\bigotimes_{m=1}^\infty \bigoplus_{k=0}^\infty \alpha_{-m}^k  v_\emptyset &\xrightarrow{\sim}\mathbb{L} \\ \alpha_{-m_1}^{k_1}v_\emptyset \otimes \cdots \otimes \alpha_{m_r}^{k_r}v_\emptyset &\mapsto  \left(\prod \alpha_{-m_i}^{k_i}\right)v_\emptyset. \end{align*} 

The boson-fermion correspondence is the central idea for proving quasimodularity of generating functions of Hurwitz numbers: One expresses the desired series as a $q$-trace in the ``fermionic" $v_\lambda$ basis, then computes the trace in the above ``bosonic" basis. To this end, Eskin-Okounkov \cite{eo2} (see also \cite{kac}, Theorem 14.10) give the following formula: \begin{align}\label{bosonic}\mathcal{E}_0(\ln x)=[y^0]\frac{1}{x^{1/2}-x^{-1/2}}\exp\left(\,\sum_{n\in\Z\backslash 0} \frac{(xy)^n-y^n}{n}\alpha_n\right),\end{align} where $[y^0]$ means the coefficient of $y^0$ in a Laurent series expansion. The operators $\mathcal{E}_0(\ln x_i)$ and $\mathfrak{W}_N$ are tensor products over $m$ of operators of the form $\exp(A\alpha_{-m}/m)\exp(B\alpha_m/m)$ acting on the tensor factor $$\mathbb{L}_m:=\bigoplus_{k=0}^\infty \alpha_{-m}^kv_\emptyset$$ for various constants $A$ and $B$. Thus, the trace (\ref{npoint1}) factors into a product over $m\geq 1$. We are led to analyze for each $m$ the following product of operators:

\begin{lemma}\label{tracelemma} For $A_i$ and $B_i$ making the expressions convergent, we have \begin{align*}\label{trace} q^H e^{A_1\alpha_{-m}}e^{B_1\alpha_m}\dots e^{A_n\alpha_{-m}}e^{B_n\alpha_m}=q^He^{(\sum_i\! A_i)\alpha_{-m}}e^{(\sum_j\! B_j)\alpha_m}\prod_{i< j} e^{B_iA_jm}.\end{align*} \end{lemma}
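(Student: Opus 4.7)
The plan is to use the Heisenberg commutation relation $[\alpha_m,\alpha_{-m}]=m$, which holds on $\mathbb{L}$ because the central element $e$ acts by $1$. Since $m$ is a scalar, the Baker--Campbell--Hausdorff formula terminates. A standard computation using $[X,Y^k]=k[X,Y]Y^{k-1}$ (valid whenever $[X,Y]$ is central) together with $[B\alpha_m,A\alpha_{-m}]=mAB$ yields the single-swap identity
$$e^{B\alpha_m}\,e^{A\alpha_{-m}} \;=\; e^{A\alpha_{-m}}\,e^{B\alpha_m}\,e^{mAB}$$
for any scalars $A,B$ for which both sides act meaningfully. The convergence hypothesis on the $A_i$ and $B_i$ is exactly what guarantees this: on any fixed energy eigenspace, $e^{B\alpha_m}$ truncates to a polynomial in $\alpha_m$, while $e^{A\alpha_{-m}}$ is a formal series in $\alpha_{-m}$ whose matrix coefficients converge inside the $q$-trace in (\ref{npoint1}).

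The lemma then follows by induction on $n$, moving the raising operators $e^{A_j\alpha_{-m}}$ leftward past the lowering operators $e^{B_i\alpha_m}$ one pair at a time. Suppose that after $k$ steps the expression has been reduced to
$$q^H\,e^{A_1\alpha_{-m}}\cdots e^{A_k\alpha_{-m}}\,e^{B_1\alpha_m}\cdots e^{B_k\alpha_m}\,\big(e^{A_{k+1}\alpha_{-m}}e^{B_{k+1}\alpha_m}\cdots e^{A_n\alpha_{-m}}e^{B_n\alpha_m}\big)\cdot\!\!\!\prod_{1\le i<j\le k}\!\!\! e^{mB_iA_j}.$$
Applying the single-swap identity $k$ times moves the block $e^{B_1\alpha_m}\cdots e^{B_k\alpha_m}$ past $e^{A_{k+1}\alpha_{-m}}$ and produces the new scalar factors $\prod_{i\le k}e^{mB_iA_{k+1}}$, which commute with everything and can be pulled outside. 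Iterating up to $k=n$ collects all the scalar factors into $\prod_{i<j}e^{mB_iA_j}$, with the raising operators all gathered on the left and the lowering operators all on the right. Using that $\alpha_{-m}$ commutes with itself (and likewise $\alpha_m$), the two collections combine into $e^{(\sum_iA_i)\alpha_{-m}}$ and $e^{(\sum_jB_j)\alpha_m}$ respectively. The factor $q^H$ remains on the far left throughout.

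There is no deep obstacle here; the content of the lemma is essentially the statement that normal-ordering a product in the Heisenberg algebra generated by $\alpha_{\pm m}$ produces the expected scalar cocycle. The only subtlety worth flagging is the bookkeeping of pairs: the scalar $e^{mB_iA_j}$ appears precisely when $e^{B_i\alpha_m}$ needs to be swapped past $e^{A_j\alpha_{-m}}$, which happens exactly for $i<j$, so the product $\prod_{i<j}e^{mB_iA_j}$ on the right-hand side emerges naturally from the induction. All analytic issues are absorbed into the standing convergence hypothesis on $A_i$ and $B_i$.
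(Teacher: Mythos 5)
Your proposal is correct and follows essentially the same route as the paper: both derive the single-swap identity $e^{B\alpha_m}e^{A\alpha_{-m}}=e^{A\alpha_{-m}}e^{B\alpha_m}e^{ABm}$ from the centrality of $[\alpha_m,\alpha_{-m}]=m$ and then iterate to move each $e^{B_i\alpha_m}$ to the right, collecting the scalar factors $e^{mB_iA_j}$ for $i<j$. You merely spell out the induction and the convergence bookkeeping in more detail than the paper does.
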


\begin{proof} The operator is easy to normal-order---the commutation relation $[B\alpha_m,A\alpha_{-m}]=BAm$ implies that $e^{B\alpha_m}e^{A\alpha_{-m}}=e^{A\alpha_{-m}}e^{B\alpha_m}e^{ABm}.$ We iteratively use this relation to move every term $e^{B_j\alpha_m}$ to the right. \end{proof}

\begin{remark}\label{convergence} Substituting (\ref{bosonic}) into (\ref{npoint1}) gives an expression which is convergent whenever $|qy_n| <|x_1y_1|<|y_1|<\cdots<|x_ny_n|<|y_n|<1.$ The arguments are the same as Section 3.2.2 of \cite{eo2}, though our convention differs slightly as we work inside the unit disc rather than outside it. This arises from (\ref{bosonic}) being expressed in terms of the transpose of the operator used in \cite{eo2}, which is irrelevant because $\mathcal{E}_0(\ln x)$ is symmetric. \end{remark}

We can now combine (\ref{npoint1}), (\ref{bosonic}), Lemma \ref{tracelemma}, and the formula $$\textrm{tr}_{\mathbb{L}_m}\,\,q^He^{A\alpha_{-m}}e^{B\alpha_m}=\frac{1}{1-q^m}\exp\left(\frac{ABmq^m}{1-q^m}\right)$$ to determine an expression for the $n$-point function. The expression is huge, so we incorporate some simplifying steps before writing it. We remark that the terms $e^{B_iA_jm}$ independent of $q$ combine over all $m$ to give $$\prod_{m\geq 1}\exp(\pm C^m/m)=(1-C)^{\mp 1}$$ for various constants $C$. Furthermore, the terms involving $q$ are of the form $$\prod_{m\geq 1} \exp\left(\frac{\pm (Cq)^m}{m(1-q^m)}\right)=\prod_{m\geq 1} (1-Cq^m)^{\mp 1},$$ which is seen by expanding $(1-q^m)^{-1}$ in a geometric series. So the resulting expression of the $n$-point function is a product of rational functions: \begin{align} \begin{aligned} &\,[y_0^0\dots y_n^0]\prod_i (x_i^{1/2}-x_i^{-1/2})^{-1} \prod_{i<j} \frac{(1-x_iy_i/x_jy_j)(1-y_i/y_j)}{(1-y_i/x_jy_j)(1-x_iy_i/y_j)} \cdot \\ &\prod_{m\geq 1}\prod_{i,j} \frac{(1-x_iy_i/x_jy_jq^m)(1-y_i/y_jq^m)}{(1-y_i/x_jy_jq^m)(1-x_iy_i/y_jq^m)} \cdot \\ & \prod_i  \frac{1-x_iy_i}{1-y_i}\prod_{m\geq 1}\frac{(1-x_iy_iq^m)(1-q^m/x_iy_i)}{(1-y_iq^m)(1-q^m/y_i)}\cdot \\ &\left[\prod_i \frac{1-y_i^N}{1-x_i^Ny_i^N}\prod_{m\geq 1} \frac{(1-y_i^Nq^{Nm})(1-q^{Nm}/y_i^N)}{(1-x_i^Ny_i^Nq^{Nm})(1-q^{Nm}/x_i^Ny_i^N)}\right]^{1/N}.\end{aligned}\end{align} As the operator $\mathfrak{W}_N$ is exp of an operator indexed by integers not divisible by $N$, we incorporate and then extract terms indexed by $N\Z\backslash \{0\}$, which gives the third and fourth lines in the above formula.

Next, we collect terms using the Jacobi triple product formula: $$j(x,q):=\sum_{n\in\Z} (-x)^nq^{\frac{n^2-n}{2}}=\prod_{m\geq 1} (1-q^m)(1-xq^{m-1})(1-x^{-1}q^m).$$ Define the theta function to be $$\vartheta(x,q):=x^{-1/2}\frac{j(x,q)}{j'(1,q)}=(x^{1/2}-x^{-1/2})\prod_{m\geq 1}\frac{(1-xq^m)(1-x^{-1}q^m)}{(1-q^m)^2}$$ which satisfies the following transformation rules: \begin{align*} \vartheta(x^{-1},q)& =-\vartheta(x,q) \\ \vartheta(e^{2\pi i}x,q)&=-\vartheta(x,q) \\ \vartheta(qx,q)&=-q^{-1/2}x^{-1}\vartheta(x,q)\end{align*} in addition to $\vartheta'(1)=1$. The second equality is made sensical by observing that $\vartheta$ is two-valued on $x\in\C^*$. Note that $$\frac{\vartheta(u^N,q^N)}{\vartheta(v^N,q^N)}=\prod_{r\textrm{ mod }N}\frac{\vartheta(\zeta_N^ru,q)}{\vartheta (\zeta_N^rv,q)}.$$ Let $x=(x_0,\dots,x_n)$ and $y=(y_0,\dots,y_n)$. Simplifying (8) then gives

\begin{proposition} In the domain where the trace converges, see Remark \ref{convergence}, the $n$-point function is $F_N(x)=[y_1^0\dots y_n^0] G_N(x,y)$ where \begin{align}\label{fp} G_N(x,y):=\prod_i \frac{\vartheta(x_iy_i)}{\vartheta(x_i)\vartheta(y_i)} \prod_{i<j} \frac{\vartheta(y_i/y_j)\vartheta(x_iy_i/x_jy_j)}{\vartheta(x_iy_i/y_j)\vartheta(y_i/x_jy_j)} \prod_{i,r} \frac{\vartheta(\zeta_N^ry_i)^{1/N}}{\vartheta(\zeta_N^r x_iy_i)^{1/N}}.\end{align} \end{proposition}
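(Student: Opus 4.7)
The strategy is to expand each theta function appearing in $G_N(x,y)$ using the Jacobi triple product $\vartheta(x,q) = (x^{1/2} - x^{-1/2}) \prod_{m\geq 1} (1-xq^m)(1-q^m/x)/(1-q^m)^2$ and to match the result term-by-term against formula (8). I would partition the rational factors of (8) into three groups and handle each separately: (i) the single-index-in-$i$ factors, which consist of $\prod_i (x_i^{1/2}-x_i^{-1/2})^{-1}$, the third line of (8), and the diagonal $i=j$ contributions of the $m$-indexed product on the second line; (ii) the genuinely pair-indexed factors $i \neq j$ coming from the first and second lines; and (iii) the fourth line, arising from the operator $\mathfrak{W}_N$.

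For group (i), direct expansion of $\vartheta(x_iy_i)/[\vartheta(x_i)\vartheta(y_i)]$ via the Jacobi triple product recovers exactly the listed factors. For group (ii), I would pair up, for each $i < j$, the four factors on the first line with both the $(i,j)$ and $(j,i)$ contributions on the second line; after rewriting terms of the form $(1-C/q^m)$ as $-Cq^{-m}(1-q^m/C)$ so as to conform with Jacobi's expansion, these combine into $\vartheta(y_i/y_j)\vartheta(x_iy_i/x_jy_j)/[\vartheta(x_iy_i/y_j)\vartheta(y_i/x_jy_j)]$ up to a monomial correction. For group (iii), the rational products in $q^N$ can first be recognized as $\prod_i \bigl[\vartheta(y_i^N,q^N)/\vartheta(x_i^N y_i^N,q^N)\bigr]^{1/N}$ by another application of the triple product, and then the identity $\vartheta(u^N,q^N)/\vartheta(v^N,q^N) = \prod_{r\bmod N} \vartheta(\zeta_N^r u,q)/\vartheta(\zeta_N^r v,q)$ recorded just before the proposition converts this into the desired $\prod_{i,r} \vartheta(\zeta_N^r y_i)^{1/N}/\vartheta(\zeta_N^r x_iy_i)^{1/N}$.

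The main obstacle is keeping track of the half-integer powers of $x_i$, $y_i$ and the $q$-power corrections that accumulate at each step: each application of the Jacobi triple product replaces a factor $(1-z)$ by $(z^{1/2}-z^{-1/2})$ up to a monomial, and each $(i,j) \leftrightarrow (j,i)$ reindexing in group (ii) introduces further sign and monomial factors. These corrections must cancel globally, since (8) was derived from an unambiguous Fock space trace computation in the convergence domain of Remark \ref{convergence} and $G_N(x,y)$ is single-valued there. Verifying this cancellation is essentially a bookkeeping exercise; once it is done, the equality $F_N(x) = [y_1^0\cdots y_n^0]\, G_N(x,y)$ follows because the coefficient extraction commutes with the identification of (8) with $G_N(x,y)$ as formal Laurent series in the $y_i$.
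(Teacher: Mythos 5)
Your proposal is correct and is essentially the paper's own argument: the paper likewise obtains the proposition by collecting the rational factors of (8) into theta functions via the Jacobi triple product and the identity $\vartheta(u^N,q^N)/\vartheta(v^N,q^N)=\prod_{r}\vartheta(\zeta_N^ru,q)/\vartheta(\zeta_N^rv,q)$, merely omitting the bookkeeping you describe. One small caution: the residual monomials do not cancel ``because both sides are well-defined'' --- that has to be checked, and it does work out (group (i) leaves an $x_i^{1/2}$ that cancels against the $x_i^{-1/2}$ from the $\mathfrak{W}_N$ factors, while the $i<j$ cross terms contribute trivial monomial $1$).
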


As the second argument is always $q$ in the above theta functions, we have suppressed it. We extract the $y_1^0\dots y_n^0$ coefficient of $G_N(x,y)$ via contour integration about a product of circles $|y_i|=c_i$ in the domain of Remark \ref{convergence}. Setting $d\mu_y=(2\pi i)^{-n}\prod dy_i/y_i$ we then have \begin{align}\label{oint} F_N(x)=\displaystyle \oint_{|y_n|=c_n}\!\!\!\!\!\!\!\cdots\oint_{|y_1|=c_1}\!\!\!\!\!\! G_N(x,y)\,d\mu_y.\end{align}

\begin{remark}\label{symmetry} Let $\sigma\in S_n$ be a permutation, and let $x^\sigma$ and $y^\sigma$ be the result of permuting the indices of the variables. From (\ref{fp}), we see that $$G_N(x^\sigma,y^\sigma)=G_N(x,y).$$ In addition, we know after the fact that $F_N(x^\sigma)=F_N(x)$ is symmetric. Denote the contour of (\ref{oint}) by $R$. Then \begin{align*} F_N(x)=& F_N(x^\sigma)=\oint_R G_N(x^\sigma,y)\,d\mu_y \\ &= \oint_R G_N(x,y^{\sigma^{-1}})\,d\mu_y = \oint_{\sigma^{-1}R} G_N(x,y)\,d\mu_y \end{align*} where the last equality is the change-of-variables formula. We conclude that formula (\ref{oint}) still holds even if we permute the ordering of the values $c_i$. \end{remark}

\begin{remark}\label{bielliptic} 
Consider the universal family of degenerating elliptic curves $\mathcal{E}\rightarrow \Delta_q$ over an analytic disc whose fiber over $q\neq 0$ is $\C^*/q^\Z$. Call this family the {\it Tate curve}. Let $$\mathcal{E}^{\times n}\times \mathcal{E}^{\times n}\rightarrow \Delta_q$$ be the $2n$-fold fiber product of the Tate curve with itself. Let $x,y\in (\C^*)^n$ be fiber coordinates on the first $n$ and second $n$ factors respectively. The transformation laws for $\vartheta$ imply that $G_N(x,y)$ is invariant, {\it as an $N$-multivalued function}, under the substitutions $y_j\mapsto e^{2\pi i }y_j$ and $y_j\mapsto e^{2\pi i \tau}y_j$ whereas \begin{align*} G_N(x,y)\big{|}_{x_j\mapsto e^{2\pi i}x_j} & = -G_N(x,y) \\ G_N(x,y)\big{|}_{x_j\mapsto qx_j} & =-q^{1/2}x_j\frac{x_{j+1}\cdots x_n}{x_1\cdots x_{j-1}}G_N(x,y). \end{align*}

Following Bloch-Okounkov \cite{bo}, it's natural to define: $$H_N(x,y):=\vartheta(x_1\cdots x_n)G_N(x,y).$$ Then $H_N(x,y)$ is invariant under $x_j\mapsto e^{2\pi i}x_j$ and has factor of automorphy $(x_1\cdots x_{j-1})^{-2}$ under the substitution $x_j\mapsto e^{2\pi i \tau} x_j$. So $H_N(x,y)$ is a multisection of a natural line bundle on $\mathcal{E}^{\times n}\times \mathcal{E}^{\times n}$. It may be possible to find an explicit formula for $F_N(x)$ analogous to the Theorem 6.1 from \cite{bo}. \end{remark}

\subsection{Extraction of Taylor coefficients} We now evaluate (\ref{oint}), again following \cite{eo2}. By making $|x_i|$ very close to $1$ and $|q|$ very small, we may assume $$|y_i|> |y_j|\prod |x_i|^{\pm 100}> |qy_i| \textrm{     for     } i>j.$$ This allows us to successively integrate in each variable $y_i$ without significantly moving the pole loci in the $y_j$ variables. 
We wish to evaluate $$\left\langle \prod {\bf p}^{r_i}_{k_i}\right\rangle_{{\bf w}_N}=\left(\prod \zeta_N^{r_i/2}k_i!\right) \,[z_1^{k_1}\dots z_n^{k_n}]F_N(\zeta_N^{r_1}e^{z_1},\dots,\zeta_N^{r_n}e^{z_n}).$$ It is thus convenient to change coordinates by setting $x_i=e^{z_i}$ and $y_i=e^{w_i}$ and to define $\Theta(u,q):=\vartheta(e^u,q)$ so that $u\in\C$. 
Define \begin{align*} F_N(e^z)&:=F_N(e^{z_1},\dots,e^{z_n}) \\  G_N(e^z,e^w)&:= G_N(e^{z_1},\dots,e^{z_n},e^{w_1},\dots,e^{w_n}). \end{align*} Changing variables, we have $$F_N(e^z)=\frac{1}{(2\pi i)^n}\oiint_{|e^{w_i}|=c_i} G_N(e^z,e^w) \,dw.$$ This integral appears difficult to compute directly, but as we are ultimately interested in the Taylor coefficients of $F_N(e^z)$, we consider the expansion of $G_N(e^z,e^w)$ about $z_i=2\pi r_ii/N$: $$G_N(e^z,e^w)=\prod_i \frac{1}{\Theta(z_i)} \sum_{k_i\geq 0} g_{k_i}^{r_i}(w_1,\dots,w_n) \prod_i (z_i-2\pi r_ii/N)^{k_i}.$$ We do not expand $\Theta(z_i)^{-1}$ because it may introduce a pole at $z_i=0$.

By Remark \ref{bielliptic}, we have that $G_N(e^z,e^w)$ is elliptic i.e. biperiodic in the $w_i$ variables, up to an $N$th root of unity. Considering the constant term of the above expansion shows that \begin{align} \label{trans} g_{k_i}^{r_i}(w_1,\dots,w_n)\big{|}_{w_i\mapsto w_i+2\pi i \tau}= \zeta_N^{-r_i}g_{k_i}^{r_i}(w_1,\dots,w_n).\end{align}

\begin{definition} A {\it theta ratio $f(w)$ of weight $k$} is a ratio $$f(w)=\prod_i \frac{\Theta^{(n_i)}(w-a_i)}{\Theta(w-b_i)}\cdot \frac{\prod_j\Theta^{(s_j)}(c_j)}{\prod_l \Theta(d_l)^{t_l}}$$ where $a_i$, $b_i$, $c_j$, $d_l$ are constants with respect to $w$, such that \begin{align*} k&=\sum n_i +\sum (s_j-1) +\sum t_l.\end{align*} \end{definition}

The derivative of a theta ratio of weight $k$ is a linear combination of theta ratios of weight $k+1$. The terms $g_{k_i}^{r_i}$ are computed by factoring out $\prod \Theta(z_i)^{-1}$, then taking derivatives of the remaining part of (\ref{fp}) and evaluating at $z_i=2\pi r_ii/N$. The result is a ratio of theta functions and derivatives, evaluated at $w_i+\frac{2\pi ri}{N}$ and $w_i-w_j+\frac{2\pi ri}{N}$. In fact, $g_{k_i}^{r_i}$ is independently a theta ratio in each of the $w_i$ variables. For integration of theta ratios, we have Fact 4 from \cite{eo2}:

\begin{lemma}\label{fact} Let $f(w)$ be a theta ratio of weight $k$. Let $C:=e^{\sum (a_i-b_i)}$. Then, the contour integral $$\frac{1}{2\pi i}\oint_{e^w=c} \!\!f(w) \,dw$$ is a linear combination of theta ratios with top weight $k-1$ if $C\neq 1$ and with top weight $k$ if $C = 1$. \end{lemma}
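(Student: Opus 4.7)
My strategy is a parallelogram contour argument that exploits the interplay between the $2\pi i$-periodicity of $f(w)$ (it is a function of $e^w$) and its quasi-periodicity under $w \mapsto w + 2\pi i \tau$, coming from the shift law $\Theta(u + 2\pi i\tau) = -q^{-1/2}e^{-u}\Theta(u)$. I will proceed by induction on the weight $k$.

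First I will compute $f(w+2\pi i\tau)$ explicitly. For each undifferentiated ratio $\Theta(w-a_i)/\Theta(w-b_i)$ the shift contributes the factor $e^{a_i-b_i}$; multiplying over $i$ produces the scalar $C$. The derivative factors $\Theta^{(n_i)}(w-a_i)$ pick up additional lower-order terms via the Leibniz rule applied to $\frac{d^{n_i}}{du^{n_i}}(-q^{-1/2}e^{-u}\Theta(u))$. Collecting these yields
\[ f(w+2\pi i\tau) = C\,f(w) + h(w), \]
where $h(w)$ is a linear combination of theta ratios with the \emph{same} constants $a_i,b_i$ (hence the same multiplier $C$), but with at least one derivative order lowered, so with weight strictly less than $k$.

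Next I close the contour into a parallelogram with vertices $\log c$, $\log c + 2\pi i$, $\log c + 2\pi i + 2\pi i\tau$, $\log c + 2\pi i\tau$. Because $f$ is $2\pi i$-periodic, the integrals along the two sides in the $2\pi i\tau$ direction cancel, and the residue theorem gives
\[ (1-C)\, I \;=\; \sum_{\text{poles of }f \text{ inside}} \mathrm{Res}_w\, f \;+\; \frac{1}{2\pi i}\oint_{|e^w|=c} h(w)\,dw. \]
At each pole $w = b_i + 2\pi i m \tau$ inside the parallelogram the residue is obtained by removing the simple factor $\Theta(w-b_i)$ and evaluating the rest at $b_i$; a direct count shows the result is a constant theta ratio of weight exactly $k-1$. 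When $C \neq 1$, I solve directly for $I$. The residues contribute weight $k-1$, and by the inductive hypothesis the integral of $h$ contributes weight $\leq k-2$ (since the terms of $h$ share the same $C \neq 1$). This delivers the claim in the case $C \neq 1$.

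When $C=1$, the parallelogram identity degenerates to the tautological relation $\sum \mathrm{Res}\,f = -\oint h$ and does not determine $I$. Here I will argue separately by decomposing $f = f_{\mathrm{ell}} + p$, where $f_{\mathrm{ell}}$ is a genuinely elliptic theta ratio of weight $k$ (built from the top-weight pieces of $f$) and $p$ is a correction of weight $\leq k-1$; induction on $k$ controls $\oint p$. For $f_{\mathrm{ell}}$, the Fourier $y^0$-coefficient can be extracted by a partial-fraction expansion in terms of the elementary building blocks $\Theta^{(n)}(w-a)/\Theta(w-b)$, each of whose integrals is a residue-like theta ratio of weight $k$. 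The main technical obstacle is this last step: one must check that the partial-fraction decomposition can be carried out so that no summand of weight greater than $k$ is introduced. This requires careful bookkeeping of how weight is allocated between the numerator derivative orders $n_i$ and the constant $\Theta^{(s)}(c)$-factors that appear when residues are evaluated; this is exactly the bookkeeping already performed for the residues in Step 3 of the $C \neq 1$ case, reused in the elliptic setting.
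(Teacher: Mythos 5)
The paper offers no proof of this lemma at all: it is imported verbatim as ``Fact 4 from \cite{eo2}'' and used as a black box, so there is no in-paper argument to measure yours against. Judged on its own terms, your treatment of the case $C\neq 1$ is the standard argument and is essentially sound: the shift law $\Theta(u+2\pi i\tau)=-q^{-1/2}e^{-u}\Theta(u)$ gives $f(w+2\pi i\tau)=Cf(w)+h(w)$ with $h$ a combination of theta ratios of weight $\leq k-1$ carrying the same constant $C$; comparing the contours $|e^w|=c$ and $|e^w|=c|q|$ gives $(1-C)I=\sum \mathrm{Res}+\frac{1}{2\pi i}\oint h$; the residues are constant theta ratios of weight $k-1$; and induction handles $\oint h$. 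You should still write out the residue computation when several $b_i$ coincide modulo the lattice---the pole is then of higher order, and while the weight count does close up (differentiation raises weight by one, evaluation of each $w$-dependent factor adjusts it back), it is not the one-line ``remove the simple factor and evaluate'' you state.

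The case $C=1$ contains a genuine gap, and it is exactly the case that carries the content of the lemma (it is why the answer can have weight $k$ rather than $k-1$). The decomposition $f=f_{\mathrm{ell}}+p$ with $f_{\mathrm{ell}}$ elliptic of weight $k$ and $p$ of weight $\leq k-1$ does not exist in general. Already $f(w)=\Theta'(w)/\Theta(w)$ (weight $1$, $C=1$) satisfies $f(w+2\pi i\tau)=f(w)-1$, so the putative correction $p$ would have to be a weight-$0$ expression with additive quasi-period $-1$; but the $w$-dependent weight-$0$ building blocks $\Theta(w-a)/\Theta(w-b)$ transform multiplicatively under $w\mapsto w+2\pi i\tau$, and for generic parameters no linear combination of them acquires the required additive quasi-period (their poles at distinct points $b$ cannot cancel). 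The mechanism that actually produces weight-$k$ output when $C=1$ is the failure of ellipticity itself: for instance $\frac{1}{2\pi i}\oint \Theta''/\Theta$ picks up an $E_2$-type contribution of weight $2$. A workable route is an elliptic partial-fraction expansion of $f$ in terms of the functions $(\Theta'/\Theta)^{(j)}(w-b_i)$ with constant theta-ratio coefficients of weight $\leq k-1$, plus a residual ``constant term'' which, because $f$ is only quasi-elliptic, can itself have weight up to $k$; the integrals of $(\Theta'/\Theta)^{(j)}$ for $j\geq 1$ vanish by $2\pi i$-periodicity and $\oint\Theta'/\Theta$ is a weight-$0$ rational constant, so the bound $k$ comes entirely from that residual term. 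Your sketch does not identify this mechanism, and as written the $C=1$ half of the lemma is not proved.
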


We now successively apply Lemma \ref{fact} to integrate $g_{k_i}^{r_i}$ against each of the $w_i$ variables. Each integral is still a theta ratio in the remaining variables, and by the transformation rule (\ref{trans}), the top weight in the theta ratio decreases by $1$ when $r_i\neq 0$, otherwise the top weight does not decrease.

\begin{lemma} The $n$-point function can be expressed as $$F_N(e^z)=\sum_{k_i\geq -1} a_{k_i}^{r_i}(q) \prod_i (z_i-2\pi r_ii/N)^{k_i}$$ where $a_{k_i}^{r_i}(q)$ is a linear combination of ratios $$\frac{\prod_{r,j} \Theta^{(n_{r,j})}(\tfrac{2\pi ri}{N})}{\prod_r \Theta(\tfrac{2\pi ri}{N})^{m_r}}.$$ Furthermore, $a_{k_i}^{r_i}(q)$ is of pure weight $$\sum k_i+ \#\{i\,\big{|}\,r_i=0\}.$$ \end{lemma}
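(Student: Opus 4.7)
The plan is to evaluate the contour integral (\ref{oint}) by interchanging integration with the Taylor expansion already derived for $G_N(e^z,e^w)$ about $z_i = 2\pi r_i i/N$, and then iteratively integrating in the $w_i$ via Lemma~\ref{fact}. Concretely,
\[
F_N(e^z) = \prod_i \Theta(z_i)^{-1} \sum_{k_i \geq 0} \Bigl( \oint g^{r_i}_{k_i}(w)\, d\mu_w \Bigr) \prod_i (z_i - 2\pi r_i i/N)^{k_i},
\]
and because each $g^{r_i}_{k_i}$ is independently a theta ratio in each $w_j$, one may integrate out the variables $w_n, w_{n-1}, \ldots, w_1$ in turn; the ordering is immaterial by Remark~\ref{symmetry}. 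Each integration produces, by Lemma~\ref{fact}, a linear combination of theta ratios in the remaining variables, and the procedure terminates in a linear combination of theta values and derivatives at constants.

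The behavior of the weight under each integration is governed by the constant $C$ in Lemma~\ref{fact}. By (\ref{trans}), $g^{r_i}_{k_i}(w)$ is multiplied by $\zeta_N^{-r_i}$ under $w_i \mapsto w_i + 2\pi i\tau$, and this quasi-periodicity is inherited by the partially integrated functions, since integration over the $\tau$-invariant contours $|e^{w_j}|=c_j$ for $j\neq i$ commutes with translation of $w_i$ in the $\tau$-direction. Hence when integrating out $w_i$ we have $C = \zeta_N^{-r_i}$, so $C=1$ iff $r_i \equiv 0 \pmod N$: integrating a nonzero-$r_i$ variable lowers the weight by one, while integrating an $r_i=0$ variable preserves it.

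It remains to pin down the initial weight of $g^{r_i}_{k_i}$ and the form of the output. Inspecting (\ref{fp}), the function $\widetilde{G}_N(z,w) := G_N(e^z,e^w)\prod_j \Theta(z_j)$ has total theta-ratio weight $n$ in the $w$-variables (the $n$ factors $\Theta(y_i)^{-1}$ each contributing $1$, with the remaining numerator/denominator factors balancing out); each partial derivative in $z_i$ used to extract $g^{r_i}_{k_i}$ increases this total weight by one, so the initial weight is $\sum k_i + n$. After $n$ integrations the final weight is therefore
\[
\sum k_i + n - \#\{i : r_i \neq 0\} \;=\; \sum k_i + \#\{i : r_i = 0\},
\]
as claimed. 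The theta arguments appearing in the output are all constants of the form $2\pi r i/N$: the only constant shifts in (\ref{fp}) are $\zeta_N^r$, specialization at $z_i = 2\pi r_i i/N$ keeps these in $\tfrac{2\pi i}{N}\Z$, and each application of Lemma~\ref{fact} only introduces theta values at linear combinations of the existing $a_i, b_i$. Finally, $\prod_i \Theta(z_i)^{-1}$ has a simple pole at $z_i = 0$, which coincides with $z_i = 2\pi r_i i/N$ precisely when $r_i = 0$, giving the index range $k_i \geq -1$ in that case.

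The main obstacles are the careful bookkeeping for the initial weight $n$, and the verification that (\ref{trans}) is preserved through the successive partial integrations. The former is a direct count from the explicit formula (\ref{fp}); the latter is immediate from translation-invariance of the $|e^{w_j}|=c_j$ contours.
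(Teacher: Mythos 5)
Your overall strategy---expand $G_N(e^z,e^w)$ about $z_i=2\pi r_i i/N$ after factoring out $\prod_i\Theta(z_i)^{-1}$, integrate out the $w_i$ one at a time via Lemma \ref{fact}, and control the weight drop through the quasi-periodicity constant $C=\zeta_N^{-r_i}$ coming from (\ref{trans})---is exactly the paper's. But the weight bookkeeping contains two compensating errors. The initial weight of $g^{r_i}_{k_i}$ is $\sum k_i$, not $\sum k_i+n$: in $G_N\prod_j\Theta(z_j)$ the denominator factors $\Theta(w_i)$ are balanced by the numerator factors $\Theta(z_i+w_i)$, so nothing contributes before differentiating. (Check $n=1$, $r=0$: $g^0_0\equiv 1$ has weight $0$, and $g^0_1=\Theta'(w)/\Theta(w)-\tfrac1N\sum_r\Theta'(w+\tfrac{2\pi ri}{N})/\Theta(w+\tfrac{2\pi ri}{N})$ has weight $1$.) Consequently $\oint g^{r_i}_{k_i}\,d\mu_w$ has top weight $\sum k_i-\#\{i:r_i\neq 0\}$, which is \emph{not} the weight of $a^{r_i}_{k_i}$; the missing $n+\sum j_i$ comes from convolving with the Taylor coefficients of $\prod_i\Theta(z_i)^{-1}$, which are weight-$(j_i+1)$ polynomials in the $E_{2k}$ (resp.\ in $E^r_k$ and $\Theta(\tfrac{2\pi ri}{N})^{-1}$), not weight $0$ as your formula tacitly assumes. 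The two oversights cancel, so your final count is correct, but both intermediate claims are false.

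The more serious gap is that you establish only an upper bound on the weight, whereas the lemma asserts \emph{purity}. Lemma \ref{fact} yields a linear combination of theta ratios of \emph{top} weight $k$ or $k-1$, and nothing in your argument excludes lower-weight contributions surviving to the end; a single iterated contour integral of a multivariate elliptic function is in general not of pure weight. The paper's proof handles this by noting (Remark \ref{symmetry}) that the answer is unchanged under permuting the radii $c_i$, so each $g^{r_i}_{k_i}$ equals the symmetrization of its iterated integral over all orderings of the contours, and then invoking (a mild generalization of) Theorem 7 of \cite{oberdieck}, which says precisely that this symmetrization is of pure weight. You cite Remark \ref{symmetry} only to say the ordering is immaterial and never return to it, so the purity claim---the actual content of the last sentence of the lemma---is unproved in your write-up.
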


\begin{proof} We must index by $k_i\geq -1$ because we have incorporated the expansion of $\prod_i \Theta(z_i)^{-1}$, which has a pole along $z_i=0$. The weight computation is as follows: The coefficients of $\prod_i \Theta(z_i)^{-1}$ have weight $n+\sum k_i.$ The weight of $g_{k_i}^{r_i}$ is $\sum k_i$ and each integration either decreases the top weight by $1$ if $r_i\neq 0$ or keeps it constant if $r_i=0$. So the integral of $g_{k_i}^{r_i}$ has top weight $$\sum k_i+ \#\{i\,\big{|}\,r_i=0\}-n.$$ To complete the proof of the lemma, we must show that $a_{k_i}^{r_i}(q)$ is of pure weight. By Remark \ref{symmetry}, we may integrate $G_N(e^z,e^w)$ with respect to any ordering of the $c_i$ and produce the same result. By uniqueness of the Taylor expansion, the same holds true for each $g_{k_i}^{r_i}(w_1,\dots,w_n)$. We may now apply (a mild generalization of) Theorem 7 from \cite{oberdieck}---the symmetrization of the integral of a multivariate elliptic function with respect to all reorderings of the contours is of pure weight. The lemma follows. \end{proof}

\begin{remark} This lemma invalidates the discussion immediately following Theorem 3 of \cite{eo2}, and ultimately proves the numerical observation in \cite{goujard}, that the brackets $\langle \prod {\bf p}_{k_i}^{r_i}\rangle_{{\bf w}_N}$ are pure weight. \end{remark}

Justifying our definition of weight are the following expansions: \begin{align*} \ln \frac{z}{\Theta(z)}&=\sum_{k\geq 2}\frac{z^{2k}}{(2k)!}E_{2k}(q) & \\ \ln \frac{\Theta(\tfrac{2\pi ri}{N})}{\Theta(z+\tfrac{2\pi ri}{N})}&= \sum_{k\geq 1} \frac{z^k}{k!}E_k^r(q) \end{align*} where for $k\geq 3$ and $r\neq 0$, $$E_k^r(q) = (-1)^k(k-1)!\sum_{\lambda\in\Lambda} \frac{1}{(\lambda+\tfrac{2\pi ri}{N})^k}$$ with $\Lambda =2\pi i\Z\oplus 2\pi i \tau\Z$. The Eisenstein series $E_k^r(q)$ is easily seen to be a modular form of weight $k$ for $\Gamma_1(N)$, the subgroup of $SL(\Lambda)$ which preserves the $N$-torsion point $\frac{2\pi ri}{N}\pmod\Lambda$. When $k=1$ or $k=2$, we also have that $E_k^r(q)$ is an Eisenstein series for $\Gamma_1(N)$ of weight $k$. 
If $N=2,3,4,6$ the theta function at $\frac{2\pi i}{N}$ has a special value:
\begin{align*}
\frac{\Theta(\tfrac{2\pi i}{2})}{\zeta_4-\zeta_4^{-1}}&=\prod_{m\geq 1}\frac{(1+q^m)^2}{(1-q^m)^2}=\frac{\eta(q^2)^2}{\eta(q)^4} \\
\frac{\Theta(\tfrac{2\pi i}{3})}{\zeta_6-\zeta_6^{-1}}&=\prod_{m\geq 1}\frac{1+q^m+q^{2m}}{(1-q^m)^2}=\frac{\eta(q^3)}{\eta(q)^3} \\
\frac{\Theta(\tfrac{2\pi i}{4})}{\zeta_8-\zeta_8^{-1}}&=\prod_{m\geq 1}\frac{1+q^{2m}}{(1-q^m)^2}=\frac{\eta(q^4)}{\eta(q)^2\eta(q^2)} \\
\frac{\Theta(\tfrac{2\pi i}{6})}{\zeta_{12}-\zeta_{12}^{-1}}&=\prod_{m\geq 1}\frac{1-q^m+q^{2m}}{(1-q^m)^2}=\frac{\eta(q^6)}{\eta(q)\eta(q^2)\eta(q^3)}.\end{align*}

Suppose $N$ is even. Consider a theta ratio whose arguments are linear functions in the $w_i$ plus some $\frac{2\pi ri}{N}$. Define the {\it parity} of the theta ratio to be the parity of the number of theta functions in the numerator and denominator with $r$ odd. The parity of $g_{k_i}^{r_i}$ is $\#\{i\,\big{|}\,r_i\textrm{ odd}\}$, and each application of Lemma \ref{fact} maintains the parity. The coefficients of the expansion of $\prod \Theta(z_i)^{-1}$ also have parity $\#\{i\,\big{|}\,r_i\textrm{ odd}\}$, and thus $a_{k_i}^{r_i}(q)$ has parity zero. Combining with the above expansions, we therefore conclude:

\begin{lemma}\label{degree} For all $k_i$ and $r_i$, we have $$a_{k_i}^{r_i}(q)\in \Q(\zeta_N)[E_k^r(q),\Theta (\tfrac{2\pi r i}{N})^{\pm 1}].$$ When $N$ is even, the total degree in $\Theta (\tfrac{2\pi r i}{N})$ ranging over $r$ odd is even for any component of $a_{k_i}^{r_i}(q)$. \end{lemma}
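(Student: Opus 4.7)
My plan rests on the logarithmic derivative expansion recorded in the excerpt just before the lemma: for $r\not\equiv 0\pmod N$,
\[
\frac{\Theta(z+\tfrac{2\pi ri}{N})}{\Theta(\tfrac{2\pi ri}{N})}=\exp\!\left(-\sum_{k\ge 1}\frac{z^k}{k!}E_k^r(q)\right).
\]
Matching Taylor coefficients in $z$ on both sides and solving inductively shows that for every $n\ge 0$,
\[
\frac{\Theta^{(n)}(\tfrac{2\pi ri}{N})}{\Theta(\tfrac{2\pi ri}{N})}\in \Q[E_1^r(q),\dots,E_n^r(q)].
\]
This is the essential identity I will plug in.

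Starting from the description of each component of $a_{k_i}^{r_i}(q)$ as a ratio $\prod_j \Theta^{(n_j^r)}(\tfrac{2\pi ri}{N})\cdot \prod_r \Theta(\tfrac{2\pi ri}{N})^{-m_r}$ provided in the paragraph above the lemma, I would first rewrite each numerator derivative as $\Theta(\tfrac{2\pi ri}{N})$ times the corresponding polynomial in Eisenstein series supplied by the identity above. The ratio then collapses to a polynomial in the $E_k^r(q)$ multiplied by an overall factor $\prod_r \Theta(\tfrac{2\pi ri}{N})^{\nu_r-m_r}$, where $\nu_r$ counts the number of numerator factors at frequency $r$. This exhibits each component as an element of $\Q(\zeta_N)[E_k^r(q),\Theta(\tfrac{2\pi ri}{N})^{\pm 1}]$ and establishes the first assertion.

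For the parity claim when $N$ is even, I would reuse the parity bookkeeping from the paragraph preceding the lemma: $a_{k_i}^{r_i}(q)$ has parity zero, meaning that in each of its components the total count of theta factors with $r$ odd (numerator plus denominator, ignoring signs) is even. The conversion $\Theta^{(n_j^r)}\mapsto \Theta(\tfrac{2\pi ri}{N})\cdot P(E_\bullet^r)$ replaces one theta factor by exactly one theta factor times a polynomial in Eisenstein series (which carries no theta values), so the total count modulo $2$ is preserved. Consequently the total degree $\sum_{r\text{ odd}}(\nu_r-m_r)$ in odd-$r$ theta factors in the final expression is congruent modulo $2$ to $\sum_{r\text{ odd}}(\nu_r+m_r)$, which is even. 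This yields the second assertion.

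The main thing I would actually want to nail down carefully is the inductive verification that $\Theta^{(n)}/\Theta\in \Q[E_\bullet^r]$, via either Bell polynomials or direct expansion of the exponential, together with checking that no stray theta values at arguments other than $\tfrac{2\pi ri}{N}$ survive the contour integrations of Lemma~\ref{fact}. Both are implicit in the surrounding discussion, and handling them explicitly is the only nontrivial ingredient needed to complete the proof.
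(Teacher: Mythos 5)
Your proposal is correct and follows essentially the same route as the paper: the paper's (implicit) proof consists precisely of the logarithmic expansions $\ln(z/\Theta(z))$ and $\ln\bigl(\Theta(\tfrac{2\pi ri}{N})/\Theta(z+\tfrac{2\pi ri}{N})\bigr)$ in Eisenstein series, which convert each $\Theta^{(n)}(\tfrac{2\pi ri}{N})$ into $\Theta(\tfrac{2\pi ri}{N})$ times a polynomial in the $E_k^r$, combined with the parity bookkeeping (parity of $g_{k_i}^{r_i}$, preservation under Lemma~\ref{fact}, and parity of the $\prod\Theta(z_i)^{-1}$ coefficients) showing $a_{k_i}^{r_i}(q)$ has parity zero. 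You have reconstructed exactly this argument, only making explicit the inductive step $\Theta^{(n)}/\Theta\in\Q[E_\bullet^r]$ that the paper leaves to the reader.
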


We need not work over $\Q(\zeta_{2N})$ when $N$ is even---the parity condition on $a_{k_i}^{r_i}(q)$ implies that it is expressible over $\Q(\zeta_N)$ in terms of Eisenstein series and the above $\eta$-ratios, or more generally $q$-series with coefficients in $\Q(\zeta_N)$. When $N$ is odd, we have $\Q(\zeta_{2N})=\Q(\zeta_N)$ in any case.

Corollary 1 of \cite{yang} implies that a monomial in $\Theta(\frac{2\pi ri}{N})$, with even degree in odd $r$ whenever $N$ is even, is a modular function of level $\Gamma_1(N^2)$. Also, note that $a_{k_i}^{r_i}(q)$ is in fact a power series in $q^N$ because ${\bf w}_N(\lambda)=0$ unless $\lambda$ is $N$-decomposable. Hence, $a_{k_i}^{r_i}(q)$ is modular with respect to $$\twobytwo{1}{0}{0}{N} \Gamma_1(N) \twobytwo{1}{0}{0}{N}^{-1}\supset \Gamma_1(N^2).$$ That is, $a_{k_i}^{r_i}(q)$ is the result of substituting $q\mapsto q^N$ into a weakly quasimodular form for $\Gamma_1(N)$.

Finally, we claim that in fact $a_{k_i}^{r_i}(q)$ is holomorphic at the cusps, and thus a modular form. It is automatically holomorphic at the cusp $q=0$. The remaining cusps correspond to $q\rightarrow \zeta$ for some root of unity $\zeta$. As $|q|\rightarrow 1$, the function $a_{k_i}^{r_i}(q)$ grows most polynomially: First, ${\bf p}_k^r(\lambda)$ grows at most polynomially in $|\lambda|$. Second, $|{\bf w}_N(\lambda)|\leq 1$ for $N=2,3,4,6$ by the main result of \cite{fomin}. By the analogue of Proposition 3 of \cite{eo2}, there is also a polynomial bound on $|{\bf w}_N(\lambda)|$ for all $N$. A pole of $a_{k_i}^{r_i}(q)$ at some cusp would cause exponential growth of $\langle \prod {\bf p}_{k_i}^{r_i}\rangle_{{\bf w}_N}$ so is excluded. Hence:

\begin{proposition} For all $N\geq 2$, the coefficient $a_{k_i}^{r_i}(q^{1/N})$ is an element of weight $\sum k_i+\#\{i\,\big{|}\, r_i=0\}$ in $QM(\Gamma_1(N))$ over $\Q(\zeta_N)$.
\end{proposition}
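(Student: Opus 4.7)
The plan is to assemble the proposition from the structural information already accumulated. By the preceding lemma, $a_{k_i}^{r_i}(q)$ lies in $\Q(\zeta_N)[E_k^r(q),\Theta(\tfrac{2\pi r i}{N})^{\pm 1}]$ and is of pure weight $\sum k_i+\#\{i\,|\,r_i=0\}$, where the $\Theta$-monomials appear with even total degree in the odd-$r$ factors when $N$ is even. The strategy is therefore to verify separately that each ingredient of this expression is (quasi-)modular of the correct level after the substitution $q\mapsto q^N$, then combine via the graded ring structure of $QM^*(\Gamma_1(N))$, and finally exclude poles at the cusps via the polynomial growth bounds cited in the text.

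For the Eisenstein series pieces, the expansions recorded just above, together with the classical theory of $E_k^r$ as Eisenstein series of weight $k$ for $\Gamma_1(N)$ (with $E_2$ only quasimodular), directly contribute elements of the right weight to $QM^*(\Gamma_1(N))$. For the theta-value monomials, one invokes Corollary 1 of Yang \cite{yang} to conclude that each such monomial is a modular function of level $\Gamma_1(N^2)$; the parity restriction from Lemma \ref{degree} is exactly what allows us to work over $\Q(\zeta_N)$ rather than $\Q(\zeta_{2N})$ when $N$ is even. Next, one uses the key structural observation that ${\bf w}_N(\lambda)=0$ unless $\lambda$ is $N$-decomposable, so that the trace in \eqref{npoint1} is a power series in $q^N$, and hence so is $a_{k_i}^{r_i}(q)$. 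Writing $a_{k_i}^{r_i}(q)=\widetilde a_{k_i}^{r_i}(q^N)$, the function $\widetilde a_{k_i}^{r_i}$ is modular with respect to the conjugate $\twobytwo{1}{0}{0}{N}\Gamma_1(N^2)\twobytwo{1}{0}{0}{N}^{-1}$, which sits inside $\Gamma_1(N)$; equivalently, $a_{k_i}^{r_i}(q)$ is (weakly) quasimodular for $\Gamma_1(N)$ of the predicted weight.

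It remains to upgrade \emph{weakly} quasimodular to quasimodular, i.e.\ to rule out poles at the cusps other than $q=0$. For this, one combines two polynomial estimates: first, ${\bf p}_k^r(\lambda)$ grows polynomially in $|\lambda|$ by inspection of its definition; second, $|{\bf w}_N(\lambda)|\le 1$ for $N=2,3,4,6$ by \cite{fomin}, and satisfies a polynomial bound for general $N$ by the analogue of Proposition 3 of \cite{eo2}. Consequently $\langle \prod {\bf p}_{k_i}^{r_i}\rangle_{{\bf w}_N}$ has at most polynomial growth as $q$ approaches any root of unity on the unit circle. A genuine pole of any $a_{k_i}^{r_i}$ at such a cusp would translate into exponential growth of this bracket, contradicting the polynomial bound. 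Therefore $a_{k_i}^{r_i}(q^{1/N})$ is holomorphic at all cusps of $\Gamma_1(N)$ and lies in $QM^{\mathrm{wt}}(\Gamma_1(N))\otimes \Q(\zeta_N)$ of the prescribed weight.

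The main obstacle I anticipate is bookkeeping rather than conceptual: one must check that the conjugation by $\twobytwo{1}{0}{0}{N}$ really lands inside $\Gamma_1(N)$ with the correct weight scaling (a factor that is absorbed into the definition of weight when passing $q\mapsto q^N$), and that the parity statement from Lemma \ref{degree} together with Yang's corollary yields modularity over the prime field $\Q(\zeta_N)$ and not merely $\Q(\zeta_{2N})$. The cusp bound, while morally clear, also requires that the polynomial estimate on ${\bf w}_N(\lambda)$ be uniform in $\lambda$; assuming the analogue of Proposition 3 of \cite{eo2} this causes no trouble, but it is the sole place where the argument depends on more than the formal algebraic identities built up in Sections 3--4.
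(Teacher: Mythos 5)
Your proposal follows essentially the same route as the paper: Lemma \ref{degree} for the structure of $a_{k_i}^{r_i}(q)$, Yang's Corollary 1 for modularity of the $\Theta$-monomials at level $\Gamma_1(N^2)$, the $q^N$-power-series observation to pass to the conjugated group and hence to $\Gamma_1(N)$ after $q\mapsto q^{1/N}$, and the polynomial growth bounds on ${\bf p}_k^r(\lambda)$ and ${\bf w}_N(\lambda)$ to rule out poles at the cusps. The only slip is the direction of the conjugation inclusion (the paper has $\twobytwo{1}{0}{0}{N}\Gamma_1(N)\twobytwo{1}{0}{0}{N}^{-1}\supset\Gamma_1(N^2)$, not the conjugate of $\Gamma_1(N^2)$ sitting inside $\Gamma_1(N)$), which is the kind of bookkeeping you already flagged and does not affect the argument.
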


Theorem \ref{modular1} follows immediately, because the monomials in the ${\bf p}_k^r$ generate $\Lambda_N$ over $\Q(\zeta_N)$ and $\langle\, \cdot\, \rangle_{{\bf w}_N}$ is linear.

\section{Moduli of cubic, quartic, and sextic differentials}\label{volumesec}
 
 \subsection{Strata of higher differentials} Throughout this section, let $\mu$ be a partition of $N(2g-2)$ with parts $$\mu_i\in \{-N+1,\,\dots,\,-1\}\cup\N.$$
 
 \begin{definition} Let $\mathcal{H}_N(\mu)$ denote the moduli space of pairs $(\Sigma,\omega)$ where $\Sigma$ is a compact Riemann surface and $\omega$ is a non-vanishing meromorphic section of $K^{\otimes N}$ such that $${\rm div}(\omega)=\sum_{i=1}^{\ell(\mu)} \mu_i x_i$$ for some unmarked points $x_i\in\Sigma$. \end{definition} Then $\mathcal{H}_N(\mu)$ is a complex orbifold with period coordinates which we define now. Consider the cyclic cover $\pi\,:\,\widetilde{\Sigma}\rightarrow \Sigma$ which trivializes the monodromy of the flat structure on $\Sigma\backslash\{x_i\}$. Assume for now that $\omega$ is not some power of a lower order differential, so that the degree of the covering is $N$. There is a Galois action of $\langle \zeta_N\rangle$ on $\widetilde{\Sigma}$ by deck transformations, and an abelian differential $\alpha$ such that $\alpha^N=\pi^*\omega$. Furthermore, the pullback of $\alpha$ under the action of $\zeta_N$ is $\zeta_N\alpha$. Let $$V:=H_1(\widetilde{\Sigma},\pi^{-1}\{x_i\};\C)^{\zeta_N}$$ denote the $\zeta_N$-eigenspace of the action of the generator $\zeta_N$ of the deck transformations. We note that $V$ is defined over $\Q(\zeta_N)$, as any representation of $\langle \zeta_N\rangle$ over $\Q$ splits into eigenspaces over this field. 

By \cite{bcggm}, Corollary 2.3, it is known that the periods of $\alpha$ against a basis of $V$ form a local coordinate chart on $\mathcal{H}_N(\mu)$. Thus, we have a natural period map from the moduli space into $V^*$ sending $$(\Sigma,\omega)\mapsto [\alpha]\in V^*.$$ 

\subsection{Tilings and periods} We now restrict to the cases $N=3,4,6$. Note that $V\oplus \overline{V}$ is defined over $\Q$ because $[\Q(\zeta_N):\Q]=2$, and thus has a natural $\Z$-Hodge structure whose integral lattice is $$(V\oplus \overline{V})\cap H_1(\widetilde{\Sigma},\pi^{-1}\{x_i\};\Z).$$ Dualizing endows $V^*\oplus \overline{V}^*$ with a $\Z$-Hodge structure. Let $L\subset V^*$ denote the projection of the lattice to the $V^*$ factor. Define $\theta_N := \zeta_N-\overline{\zeta}_N$.

\begin{proposition}\label{monotile} Let $N=3,4,6$. An element $(\Sigma,\omega)\in\mathcal{H}_N(\mu)$ admits a tiling of its flat structure into fixed size bicolored hexagons, squares, or triangles, respectively, if and only if the period point of $(\Sigma,\omega)$ lies in $L$. \end{proposition}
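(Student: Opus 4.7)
The plan is to reduce the tiling condition to the period condition in three steps: (i)~use Proposition~\ref{zero} to identify tilings with branched covers of the elliptic orbifold $E/\langle\zeta_N\rangle$; (ii)~lift such a branched cover, via the cyclic cover $\pi\colon\widetilde\Sigma\to\Sigma$, to a $\langle\zeta_N\rangle$-equivariant holomorphic map $\widetilde\sigma\colon\widetilde\Sigma\to E$ with $\widetilde\sigma^*dz=\alpha$; (iii)~recognize the existence of such a $\widetilde\sigma$ as the integrality of all periods of $\alpha$, and match this with the condition that the period point lies in $L$.

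For step~(i), fixing the tile size fixes the translation sublattice $\Lambda\subset\C$ of the wallpaper group, and hence the elliptic curve $E=\C/\Lambda$ together with its orbifold quotient $E/\langle\zeta_N\rangle$. Proposition~\ref{zero} identifies a tiling of $(\Sigma,\omega)$ by fixed-size tiles of the appropriate type (bicolored hexagons for $N=3$, squares for $N=4$, triangles for $N=6$) with a branched cover $\sigma\colon\Sigma\to E/\langle\zeta_N\rangle$ ramified over the orbifold points with profile dictated by $\mu$. For step~(ii), pulling $\sigma$ back along $\pi$ kills the flat-structure monodromy by construction of $\widetilde\Sigma$, so $\sigma\circ\pi$ lifts along $E\to E/\langle\zeta_N\rangle$ to a $\langle\zeta_N\rangle$-equivariant map $\widetilde\sigma\colon\widetilde\Sigma\to E$. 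By uniqueness of the $\zeta_N$-eigendifferential whose $N$-th power is $\pi^*\omega$, the pullback $\widetilde\sigma^*dz$ equals $\alpha$ up to a multiplicative scalar pinned down by the tile size. Conversely, any such $\widetilde\sigma$ descends to a branched cover $\sigma$, which pulls the tiling of $E/\langle\zeta_N\rangle$ back to one of $(\Sigma,\omega)$.

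Step~(iii) is the heart of the proof. Such a $\widetilde\sigma$ exists exactly when every integer period $\int_\gamma\alpha$, for $\gamma\in H_1(\widetilde\Sigma,\pi^{-1}\{x_i\};\Z)$, lies in $\Lambda$. Since $\alpha$ transforms as a $\zeta_N$-eigenvector, it annihilates every $\zeta_N^k$-eigenspace of homology except $V$; the $\overline V$-component, the invariant subspace, and (for $N=4,6$) the intermediate eigenspaces all pair trivially with $\alpha$. Hence the period functional factors through pairing with $\alpha\in V^*$, and the condition ``all integer periods in $\Lambda$'' becomes the statement that $\alpha$, extended by its complex conjugate on $\overline V$, represents an element of $V^*\oplus\overline{V^*}$ that is integer-valued on $(V\oplus\overline V)\cap H_1(\widetilde\Sigma,\pi^{-1}\{x_i\};\Z)$. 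By definition of $L$, this is exactly $\alpha\in L$.

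The main technical obstacle will be making step~(iii) fully precise: one must verify that the dual lattice in $V^*\oplus\overline{V^*}$ cut out by pairing with $(V\oplus\overline V)\cap H_1(\Z)$ encodes precisely the integrality of all periods of $\alpha$ against the full integer homology of $\widetilde\Sigma$, with no information lost along the $\zeta_N$-eigenprojection. This should reduce to the observations that the eigenprojector is defined over $\Q(\zeta_N)$, that components of an integer cycle outside $V\oplus\overline V$ contribute nothing to the pairing with $\alpha$, and that on $V\oplus\overline V$ the $\overline V$-component of a real integer cycle is forced to be the complex conjugate of its $V$-component, so the lift of $\alpha$ to $V^*\oplus\overline{V^*}$ is unambiguous. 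Once this bookkeeping is settled, extending $\widetilde\sigma$ across the ramification locus and matching cone angles at the $x_i$ with the local orbifold structure of $E/\langle\zeta_N\rangle$ is automatic from the prescription of $\mu$.
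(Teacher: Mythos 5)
Your overall architecture---first characterize tileability by a lattice condition on the periods of $\alpha$, then identify that condition with $\alpha\in L$---is the same as the paper's, even though you phrase the first step via branched covers and lifting rather than via the developing map and reduction of structure group to the crystallographic group $\langle\zeta_N\rangle\ltimes\Z[\zeta_N]$. The genuine gap is that you have the wrong lattice in the middle, and that is exactly where the content of the proposition sits. A tiled surface need not have all of its cone points over a single point of $E$: the singularities of $(\Sigma,\omega)$ sit at orbifold points of $E/\langle\zeta_N\rangle$ of various types. For $N=4$, say, a square-tiled surface can have one cone point at a vertex of the tiling and another at a face center; the relative period of $\alpha$ between their preimages then lies in $\tfrac{1+i}{2}+\Z[i]$, hence in $\theta_4^{-1}\Z[i]=\tfrac12\Z[i]$ but not in the translation lattice $\Z[i]$. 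So the correct condition on a relative cycle $\gamma$ is $\int_\gamma\alpha\in\theta_N^{-1}\Z[\zeta_N]$, the lattice of fixed points of the $\langle\zeta_N\rangle$-action on $E$, not $\int_\gamma\alpha\in\Lambda$ as you assert in step~(iii). Your condition cuts out a proper finite-index sublattice of $L$; both of your claimed equivalences (``tileable $\Leftrightarrow$ periods in $\Lambda$'' and ``periods in $\Lambda$ $\Leftrightarrow$ $\alpha\in L$'') are false in one direction each, so the argument does not close even though the conclusion is true.

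The second half of step~(iii) hides the same issue. The computation you need---and which the paper's proof supplies---is that integrality of $\alpha+\overline{\alpha}$ on $(V\oplus\overline{V})\cap H_1(\widetilde{\Sigma},\pi^{-1}\{x_i\};\Z)$, combined with the integrality of $\zeta_N\alpha+\overline{\zeta_N\alpha}$ obtained by applying the deck action, forces $\theta_N\alpha$ (not $\alpha$) to take values in $\Z[\zeta_N]$, while conversely $2\,\mathrm{Re}\bigl(\theta_N^{-1}\Z[\zeta_N]\bigr)=\Z$; this uses $[\Q(\zeta_N):\Q]=2$ and is precisely what matches $L$ with $\theta_N^{-1}\Z[\zeta_N]$ rather than with $\Z[\zeta_N]$. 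Your eigenspace bookkeeping (components of an integral cycle outside $V\oplus\overline{V}$ pair trivially with $\alpha$, and the $\overline{V}$-component of a real cycle is conjugate to its $V$-component) is correct but does not substitute for this. A smaller point: Proposition~\ref{zero} only identifies tilings whose curvature is concentrated over a single orbifold point with branched covers, so invoking it in step~(i) for a general stratum $\mathcal{H}_N(\mu)$, where singularities are distributed over several orbifold points, already requires the refinement whose absence causes the lattice error above.
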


\begin{proof} Assume $\mu\neq \emptyset$. We first claim that $(\Sigma,\omega)$ admits a tiling iff \begin{align}\label{lattice} \int_\gamma \omega^{1/N}\in \theta_N^{-1}\Z[\zeta_N]\end{align} for all paths $\gamma$ connecting some possibly equal points $x_i$ and $x_j$. If (\ref{lattice}) holds, the flat structure on $\Sigma\backslash \{x_i\}$ admits a reduction of structure group to the crystallographic group $G=\langle \zeta_N\rangle \ltimes \theta_N^{-1}\Z[\zeta_N]$. In particular, the monodromy is valued in $G$. Taking the developing map from the universal cover of $\Sigma\backslash \{x_i\}\rightarrow \C$, we can pull back the tiling on $\C$ which $G$ preserves to the universal cover. This tiling then descends and extends to $\Sigma$. Conversely, a surface tiled by appropriately sized tiles must have periods in this lattice.

Next, we claim that $\alpha\in L$ iff (\ref{lattice}) holds. 
Suppose that $\alpha\in L$. That is, $$\alpha\in H^1(\widetilde{\Sigma},\pi^{-1}\{x_i\};\C)^{\zeta_N}=V^*$$ is the projection of an integral point to $V^*$. Equivalently $\alpha+\overline{\alpha}$ is integral. Applying the action of $\zeta_N$, we also have integrality of $\zeta_N\alpha+\overline{\zeta_N\alpha}$. Solving a linear system, we conclude that $\theta_N\alpha$ is defined over $\Z[\zeta_N]$. In particular, $$\int_\gamma \alpha\in \theta_N^{-1}\Z[\zeta_N]$$ for all $\gamma\in H_1(\widetilde{\Sigma},\pi^{-1}\{x_i\};\Z)$. Any path connecting $x_i$ to $x_j$ on $\Sigma$ lifts to a path connecting a point of $\pi^{-1}\{x_i\}$ to a point of $\pi^{-1}\{x_j\}$ on $\widetilde{\Sigma}$. Thus, (\ref{lattice}) holds, since we may compute any such integral on $\widetilde{\Sigma}$. 

Conversely, suppose (\ref{lattice}) holds. Given any $\gamma\in H_1(\widetilde{\Sigma},\pi^{-1}\{x_i\};\Z)$, we may represent it as a path which terminates on the set $\pi^{-1}\{x_i\}$. So we may compute $\int_\gamma \alpha$ downstairs as the integral of $\omega^{1/N}$ between some points $x_i$ and $x_j$. Hence, for all $\gamma\in H^1(\widetilde{\Sigma},\pi^{-1}\{x_i\};\Z)$, $$\int_\gamma \alpha+\overline{\alpha}\in 2\textrm{Re}\,\theta_N^{-1}\Z[\zeta_N]=\Z$$ which implies $\alpha\in L$. When $\mu=\emptyset$, we have that $(\Sigma,\omega)$ is a flat torus, in which case, the proposition is trivial. \end{proof}

\begin{remark} Suppose $N=6$ and $\mu$ has all even parts. One might expect, as above, that hexagon-tiled surfaces form a lattice in the moduli space $\mathcal{H}_6(\mu)$, but some subtleties arise. The difference with the above cases is that the vertices of the hexagonal tiling of $\C$ do not form a lattice, rather the centers of the hexagons do.

Observe that a hexagon-tiled surface must also be tileable by equilateral triangles, thus it is natural to first impose (\ref{lattice}). The existence of a compatible tiling by hexagons is equivalent to the existence of a vertex $v$ of the triangulation satisfying certain conditions: Let $\gamma_i$ denote a path connecting $v$ to $x_i$ and let $\gamma$ denote any closed path based at $v$. If we have \begin{align*}\int_{\gamma_i} \omega^{1/6}  \in \,& \theta_6^{-1}\Z[\zeta_6]- \Z[\zeta_6]\textrm{ for all }i, \\ &\int_{\gamma} \omega^{1/6} \in \Z[\zeta_6],\end{align*} then $(\Sigma,\omega)$ admits a tiling into hexagons. To construct this tiling, we declare that $w$ is the center of a hexagon if and only if $$\int_v^w \omega^{1/6}\in \Z[\zeta_6]$$ for any path connecting $v$ to $w$. The first condition ensures that no center of a hexagon lies at a singularity of $\Sigma$. Combined, the two conditions ensure that being a center is independent of the path connecting $v$ to $w$.


\end{remark}

\begin{remark}\label{multiple} Suppose that $\delta=\gcd(\mu_1,\dots,\mu_{\ell(\mu)},N)\neq 1$. For each $\delta'\big{|}\delta$, there are some connected components of $\mathcal{H}_N(\mu)$ corresponding to $N$-ic differentials which are a power of an $N\delta'/\delta$-ic differential and when $\delta'\neq \delta$ the tileable surfaces in this connected component admit a more rigid tiling. For instance, for $N=6$ and $\delta=2$ the hexagon-tiled surfaces form a discrete subset of the moduli space, and when $\delta'=1$ these tilings can be bicolored. Thus, it is possible to compute via inclusion-exclusion the number of tiled surfaces which lie on the primitive components, i.e. those where $\delta'=\delta$. \end{remark}

\subsection{The Masur-Veech volume} There is a canonical choice of volume form on $V^*$, the Lebesgue measure, scaled so that $L$ has covolume $1$.
But $\mathcal{H}_N(\mu)$ has infinite volume with respect to this measure because any chart into $V^*$ extends linearly to a cone---when we scale $\omega\mapsto c^N\omega$ for some $c\in \C$, the resulting period point $[\alpha]$ scales by $c$. There is also a Hermitian form on $V^*$ defined by its taking the value $$\textrm{Area}(\Sigma,\omega)=\int_\Sigma |\omega |^{2/N}= \frac{1}{N}\int_{\widetilde{\Sigma}}|\alpha|^2$$ on $(\Sigma,\omega)$. The image of the period map lands in the vectors of positive norm in $V^*$. To get finite volume, one may define $$\mathcal{H}_N^1(\mu):=\{(\Sigma,\omega)\in \mathcal{H}_N(\mu)\,\big{|}\, \textrm{Area}(\Sigma,\omega)=1\},$$ which kills the scaling by an element of $\R^+$. Then one may define the volume of a subset $U\subset \mathcal{H}_N^1(\mu)$ to be $$\textrm{Vol}(U):=\textrm{Vol}_{\rm Lebesgue}\{t \varphi(u)\,\big{|}\,u\in U,\,t\in [0,1]\}$$ where $\varphi$ is the period map to $V^*$. We call this volume the {\it Masur-Veech volume} in analogy with the cases $N=1,2$ see \cite{masur,veech}. We cite recent result due to Duc-Manh Nguyen \cite{ng} which proves:

\begin{theorem}[Nguyen \cite{ng}]\label{duc} The Masur-Veech volume ${\rm Vol}(\mathcal{H}^1_N(\mu))$ is finite for all $N$ and $\mu$ satisfying $\mu_i>-N$. \end{theorem}

When $N=1$, Kontsevich and Zorich conjectured, and Eskin and Okounkov proved in \cite{eo1}, that $\pi^{-2g}\textrm{Vol}(\mathcal{H}^1_1(\mu))\in\Q.$ If $g=0$ and $N=2$, Theorem 1.1 of \cite{aez} implies that $\pi^{-2g_{\textrm{eff}}}\textrm{Vol}(\mathcal{H}^1_2(\mu))\in\Q$ where $g_{\textrm{eff}}:=\hat{g}-g$ and $\hat{g}$ is the genus of the double cover $\widetilde{\Sigma}$. A proof for all $g>0$ of the same result has not yet appeared in the literature. Using the main results of this paper, we may extend a weaker form of rationality to the cases $N=3,4,6$:

\begin{theorem}\label{volume} Suppose $N=3,4,6$. Then $$(i\theta_N)^{-\dim \mathcal{H}_N}{\rm Vol}(\mathcal{H}_N^1(\mu))\in\Q[2\pi i\theta_N].$$ \end{theorem}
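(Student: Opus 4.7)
The plan is to follow the Eskin--Okounkov template from \cite{eo1,eo2}: extract $\mathrm{Vol}(\mathcal H_N^1(\mu))$ from the large-$d$ asymptotics of the Fourier coefficients of the tiling generating function $h_N^{con}(\vec\kappa,q)$, and use the modular properties of $h_N^{con}$ to pin down those asymptotics.

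First, by Propositions \ref{zero} and \ref{monotile}, $d$-tile surfaces in $\mathcal H_N(\mu)$ are exactly the lattice points of $L\subset V^*$ lying in the image of the stratum under the period map, with area equal to $d$ times the area of a single tile (a rational multiple of $|\theta_N|^2$). A standard Tauberian / lattice-counting argument then gives
\[
\mathrm{Vol}(\mathcal H_N^1(\mu)) = c_N\,(i\theta_N)^{\dim\mathcal H_N}\,\lim_{d\to\infty}\frac{[q^d]\,h_N^{con}(\vec\kappa,q)}{d^{\dim\mathcal H_N-1}},
\]
with $c_N\in\Q$. Multiple connected components (Remark \ref{multiple}) are handled by M\"obius inversion without leaving this ring, and the $\{\infty\}$ alternative is reserved for the case when the Tauberian limit actually diverges.

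Next, Theorem \ref{intro} makes $h_N^{con}(\vec\kappa,q)$ a quasi-modular form for $\Gamma_1(N)$ of weight bounded in terms of $\vec\kappa$, with rational Fourier coefficients. The growth of $[q^d]\,h_N^{con}$ is governed by the $q$-expansions of $h_N^{con}$ at the non-$\infty$ cusps of $\Gamma_1(N)$, which I would access via the $S$-transformation $\tau\mapsto -1/\tau$ (the $E_2$ quasi-modular anomaly contributes a controllable $1/(\pi i\tau)$ correction). Eisenstein series of weight $k$ for $\Gamma_1(N)$ have Fourier coefficients in $\Q(\zeta_N)$ and cusp constant terms in $(2\pi i)^k\,\Q(\zeta_N)$. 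Rationality of the Fourier coefficients of $h_N^{con}$ forces Galois invariance under $\zeta_N\leftrightarrow\overline{\zeta_N}$, so the total surviving contribution lies in $(2\pi i)^k\,\Q(\theta_N)\subset \Q[2\pi i\theta_N]$. Substituting back into the formula above cancels the $(i\theta_N)^{\dim\mathcal H_N}$ factor and yields the claimed inclusion.

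The main obstacle is verifying that the above combination really lands in $\Q[2\pi i\theta_N]$ and not in the a priori larger ring $\Q(\zeta_N)[\pi]$. One has to track the Eisenstein contribution at each non-$\infty$ cusp of $\Gamma_1(N)$ together with the $E_2$ quasi-modular anomaly, and then show that pairs of cusps related by $\zeta_N\leftrightarrow\overline{\zeta_N}$ combine with rational coefficients. The crucial arithmetic input---and the reason this theorem only asserts a weaker rationality than the full Kontsevich--Zorich statement---is that $[\Q(\zeta_N):\Q(\theta_N)]\le 2$ for $N=3,4,6$, as remarked at the end of the proof of Proposition \ref{hooks}. A secondary but routine check is the Tauberian extraction itself: one must identify the generic power of $d$ in the asymptotic expansion and handle the degenerate case in which this leading term cancels, which is exactly the situation producing the $\{\infty\}$ alternative.
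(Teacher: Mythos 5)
Your overall strategy coincides with the paper's: extract the volume from the asymptotics of the tiling count via lattice-point sampling (Propositions \ref{zero} and \ref{monotile}), then use quasi-modularity and the $S$-transformation $\tau\mapsto-1/\tau$ so that only constant terms at the cusp $0$ survive in the limit. (The paper works with the Abelian limit $\lim_{q\to1}(1-q)^{\dim}H_N^{con}$, citing Proposition 1.6 of \cite{eo1}, rather than coefficient asymptotics, but this is the same computation.)

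The genuine gap is the step you yourself flag as ``the main obstacle,'' and the one concrete claim you make toward closing it is false as stated: for $N=3,4,6$ one has $\Q(\theta_N)=\Q(\zeta_N)$ (e.g. $\theta_3=i\sqrt3$ generates $\Q(\sqrt{-3})$), so $(2\pi i)^k\,\Q(\theta_N)$ is \emph{not} contained in $\Q[2\pi i\theta_N]$ --- already $2\pi i\notin\Q[2\pi i\theta_N]$. What must be shown is that the weight-$k$ contribution is a rational multiple of $(2\pi i\theta_N)^k$, i.e.\ that complex conjugation acts on the algebraic part of the constant term by $(-1)^k$, matching its action on $(2\pi i)^k$. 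The paper closes this by exhibiting explicit generators, e.g.\ $QM^*(\Gamma_1(3))=\Q[\theta_3E_1^1,E_2,\theta_3E_3^1]$, where by Theorem 4.2.3 of \cite{diashur} the Fourier coefficients of $E_k^r$ lie in $\Q$ for $k$ even and in $\Q\cdot\theta_N$ for $k$ odd, and where $\tau^{-wt}E(-1/\tau)$ is again an Eisenstein series with rational constant term $c_0$; the substitutions $\theta_N^{wt}E\mapsto c_0(2\pi i\theta_N\ell)^{wt}$ and $E_2\mapsto\tfrac16\pi^2\ell^2-\tfrac12\ell$ (with $\pi^2\in\Q\cdot(2\pi i\theta_N)^2$ because $\theta_N^2\in\Q$) then place every coefficient of $P(\ell)$ in $\Q[2\pi i\theta_N]$. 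Your Galois-averaging idea can likely be repaired along these lines, but as written it proves containment in the wrong ring. A smaller point: the $\{\infty\}$ alternative arises when $\deg P>\dim$, not when the leading term cancels --- cancellation would force the limit to be $0$, which is excluded because the volume is positive.
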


\begin{proof}[Sketch.] The lattice points in $L$ evenly sample the Lebesgue measure, and therefore we can extract the volume using the asymptotics of the number of tiled surfaces, see Proposition 1.6 and Proposition 3.2 of \cite{eo1}, or Proposition 7 of \cite{goujard}. When $N=3,4,6$ (similar formulas hold for $N=1,2$) \begin{align*}  {\rm Vol}(\mathcal{H}^1_N(\mu))&=\lim_{q\rightarrow 1} \frac{ H_N^{\circ}(\emptyset\,|\,\emptyset,\emptyset, \mu+N)(1-q)^{\dim}}{(\textrm{Area of Tile})^{\dim}\dim! }
\end{align*} where $\dim=\dim_\C\mathcal{H}_N^1(\mu)$ and $$\textrm{Area of Tile}=
\left\{
	\begin{array}{llll}
		\frac{1}{4\sqrt{3}} & \mbox{if } N=6,\,\textrm{Tile = triangle}   \\
		\frac{1}{4} & \mbox{if } N=4,\,\textrm{Tile = square} \\
		\frac{1}{2\sqrt{3}} & \mbox{if } N=3,\,\textrm{Tile = vertex-bicolored hexagon} \\
		\frac{1}{2} & \mbox{if } N=2,\,\textrm{Tile = edge-bicolored quadrilateral} \\
		1 & \mbox{if } N=1,\,\textrm{Tile = asymmetric quadrilateral} \\		
	\end{array}
\right.$$ is the area of the fundamental tile for a surface with period point in $L$. For $N=6$ and $\mu$ having even parts, a similar formula holds, but there is a rational constant determined by the index of the hexagon-tiled surfaces within the triangulated surfaces. If there are components of a stratum for which every surface $(\Sigma,\omega)$ in that component has a non-trivial automorphism, the above formula is valid only when we weight the volume of that component by the appropriate factor.

When $N=3,4,6$, the rings of quasimodular forms for $\Gamma_1(N)$ are: \begin{align*}
QM^*(\Gamma_1(3))&=\Q[\theta_3 E_1^1 , E_2, \theta_3 E_3^1] \\
QM^*(\Gamma_1(4))&=\Q[\theta_4 E_1^2, E_2, E_2^1] \\
QM^*(\Gamma_1(6))&=\Q[\theta_6 E_1^1, \theta_6 E_1^2, E_2] \end{align*}

The factors of $\theta_N$ are included so that the coefficients are rational. More precisely, by Theorem 4.2.3 of \cite{diashur}, we have for all $k\geq 3$ the Fourier expansion $$E_k^r(q)=(k-1)!\!\!\sum_{d\equiv r\,(N)}\!\!\!\!'\,\left(\frac{-N}{2\pi id}\right)^k+\sum_{n\geq 1} q^n\sum_{m\mid n}\textrm{sgn}(m)m^{k-1}\zeta_N^{rm}$$ and thus the Fourier coefficients lie in $\Q(\zeta_N)\cap \R=\Q$ when $k$ is even and lie in $\Q(\zeta_N)\cap i\R= \Q\cdot \theta_N$ when $k$ is odd. Note that when $k=1$, we must add an additional constant to make $E_k^r(q)$ modular, and when $k=2$ the above series is only quasimodular.

Let $E=E_k^r$ be one of the above Eisenstein series, not equal to $E_2$. Then $$\tau^{-wt}E(-1/\tau)=c_0+c_1q^{1/N}+c_2q^{2/N}+\cdots$$ admits a holomorphic Fourier series expansion because the width of the cusp $\tau=0$ on the modular curve $X_1(N)$ is equal to $N$. In fact, the result is an Eisenstein series for $\Gamma^1(N)$ with $c_i\in\Q$. Taking the limit of $E(q)$ as $q\rightarrow 1$ is the same as taking the limit of $E(\tau)$ as $\tau\rightarrow 0$. In the expansion $$E(\tau)=(-1/\tau)^{wt}\left[c_0+c_1e^{-2\pi i/N\tau}+c_2e^{-4\pi i/N\tau}+\cdots \right]$$ all of the terms decay exponentially as $\tau\rightarrow 0$ except the contribution from $c_0$. For $E_2$ we have the quasi-modular transformation $$E_2(\tau)= (-1/\tau)^2\left[-\frac{1}{24}+e^{-2\pi i/\tau}+3e^{-4\pi i/\tau}+\cdots \right]+\frac{1}{4\pi i\tau}.$$ Setting $\ell=-(2\pi i\tau)^{-1}$, we may make the substitutions \begin{align} \begin{aligned}\label{asymptotics} \theta_N^{wt} E(q)&\longleftrightarrow c_0(2\pi i\theta_N \ell)^{wt} \\ E_2(q)&\longleftrightarrow \tfrac{1}{6}\pi^2\ell^2-\tfrac{1}{2}\ell \end{aligned} \end{align} and let $\ell\rightarrow \infty$ to analyze the asymptotic behavior of the above generating functions. Thus the volumes in the statement of the theorem are expressible as the limit $$\frac{\lim_{\ell \rightarrow \infty}\, (1-e^{-1/\ell})^{\dim}P(\ell)}{(\textrm{Area of Tile})^{\dim}\dim !}$$ where $P(\ell)$ is by definition the polynomial gotten by expressing the quasimodular form $H_N^{\circ}(\emptyset\,|\,\emptyset,\emptyset, \mu+N)$ in terms of the generators $E(q)$ and $E_2(q)$, then making the substitutions dictated by (\ref{asymptotics}). The volume is non-zero and by Theorem \ref{duc} is finite. Hence the degree of $P(\ell)$ is exactly $\dim$, and the limit is the leading coefficient of $P(\ell).$ Thus, the limit lies in $\Q[2\pi i\theta_N]$. \end{proof}

\begin{remark} In \cite{cmz}, the polynomial $P(\ell)$ is called the {\it asymptotic substitution} of the corresponding quasimodular form. \end{remark} 

\subsection{Conjectures on the cases $N=5,\,N\geq 7$}\label{conjs} A surprising result of Proposition \ref{hooks} is that after scaling by an appropriate constant, $$\left(\frac{\dim \lambda}{|\lambda|}\right)^2\prod_t {\bf c}_t(\lambda)$$ for $t$ ranging in the sets $\{2,2,2,2\}$, $\{3,3,3\}$, $\{2,4,4\}$, $\{2,3,6\}$ admits a generalization beyond the values $N=2$, $3$, $4$, $6$, respectively, to all positive integers $N$. Once this generalization is made, Theorem \ref{conj} and its proof are valid for all $N$. Furthermore, the resulting brackets $\langle F \rangle_{{\bf w}_{N,\eta}}$ are modular forms for $\Gamma(N)$ with coefficients in $\Q$. {\it If the elliptic orbifold of order $5$ existed}, it seems likely that these brackets would encode its Hurwitz theory. This raises the following question:

\begin{question}\label{firstq} Do the brackets $\langle {\bf F} \rangle_{{\bf w}_N}$ for $N=5, N\geq 7$ have an enumerative interpretation? \end{question}

Note that the shifted symmetric function ${\bf F}_\eta^D(\lambda)\in \Lambda^*_N$ appearing in Proposition \ref{modify} for $D=\{\emptyset\,\big{|}\,\emptyset,\emptyset,\mu\}$ admits a natural generalization ${\bf F}_\eta^\mu(\lambda)$ to any $N$-core $\eta$. Consider $$H_N^\bullet(\mu,q):=\sum_{N\textrm{-cores }\eta} \langle {\bf F}^\mu_\eta(\lambda)\rangle_{{\bf w}_{N,\eta}}.$$ One can formally produce the ``connected" generating function $H_N^\circ(\mu,q)$ via inclusion-exclusion on the components of $\mu$. This quasimodular form is the generating function for tilings in the stratum $\mathcal{H}_N(\mu-N)$ whenever $N=1,2,3,4,6$ but can it be related to the stratum for all $N$? 

\begin{conjecture} Fix a stratum $\mathcal{H}_N(\mu-N)$ and denote its dimension ${\rm dim}$. Let $P(\ell)$ be the asymptotic polynomial of $H_N^\circ(\mu,q)$. Choose a constant $A_N$ and take the limit $$\lim_{\ell\rightarrow \infty}\frac{(1-e^{-1/\ell})^{\rm dim} P(\ell)}{(A_N)^{\rm dim}\,{\rm dim}!}.$$ Does this limit exist and if so is there a fixed value of $A_N$ for which the limit equals ${\rm Vol}(\mathcal{H}^1_N(\mu-N))$? \end{conjecture}

\appendix

\section{Numerical Example}

In this example, we compute the generating function of vertex-bicolored hexagon tilings of the sphere with curvature profile $\kappa=(2,2,1,1)$ over the black vertices and no curvature over the white vertices. These tilings cover $\mathbb{P}_{3,3,3}$ with ramification profile $3^d$, $3^d$, and $(3^{d-2},2,2,1,1)$. The generating function for possibly disconnected covers with no orbifold-unramified components and this ramification profile is $$\langle {\bf g}^3_{2,2,1,1}\rangle_{{\bf w}_3}$$ where ${\bf g}^3_{2,2,1,1}$ is the unique element of $\Lambda_3$ which takes the value $$\frac{{\bf f}_{3,\dots,3,2,2,1,1}(\lambda)}{{\bf f}_{3,\dots,3}(\lambda)}$$ on any $3$-decomposable partition $\lambda$. No strict subset of the curvatures form the profile of a tiling, hence the bracket already counts connected covers. The weight of ${\bf g}_{2,2,1,1}^3$ will be $(2+2+1+1)/3=2$, similar to the formula in \cite{eo2}, though we did not prove this and the most naive bound from Section \ref{shiftedsymm} on the weight would be $4$. We may represent it as a linear combination \begin{align*} {\bf g}_{2,2,1,1}^3=&A_1({\bf p}_1^1)^2+A_2({\bf p}_1^1{\bf p}_1^2)+A_3({\bf p}_1^2)^2+A_4({\bf p}_2^1)\,+  \\ & A_5({\bf p}_2^2)+A_6({\bf p}_1^0)+A_7({\bf p}_1^1)+A_8({\bf p}_1^2)+A_9\end{align*} for constants $A_i\in\Q(\zeta_3)$. Plugging in enough partitions $\lambda$ on both sides, we determine the constants:
\begin{align*}
&A_1=\frac{1}{81}\zeta_6 && A_2=-\frac{1}{24} && A_3=\frac{1}{81}\zeta_6^{-1} \\
&A_4=\frac{1}{32}(1+\zeta_6) && A_5=\frac{1}{32}(1+\zeta_6^{-1}) && A_6=0 \\
&A_7=\frac{1}{12}\zeta_3 && A_8=\frac{1}{12}\zeta_3^{-1} && A_9=\frac{1}{18}.
\end{align*}

This formula allows us to easily compute a relatively large number of coefficients of $\langle {\bf g}^3_{2,2,1,1}\rangle_{{\bf w}_3}$, as we only need a table of $3$-decomposable partitions, along with their weights ${\bf w}_3(\lambda)$, which are easily computed by Proposition \ref{hooks}. This average is, by Theorem \ref{modular1}, the result of the substitution $q\mapsto q^3$ into a quasimodular form for $\Gamma_1(3)$ of weight bounded by $2$. Thus, \begin{align*}H(q):=H^\bullet_3(\emptyset\,\big{|}\,\emptyset,\emptyset,(-1,&-1,-2,-2))=B_1+B_2X+B_3Y+B_4Z\hspace{2pt}\textrm{ where} \\ X&=\tfrac{1}{6}+\sum \sigma_0^\chi(n)q^n \\ Y&=-\tfrac{1}{24}+\sum \sigma_1(n)q^n  \\ Z&=-\tfrac{1}{24}+\sum \sigma_1(n)q^{3n}. \end{align*} Here $\sigma_0^\chi(n)=\sum_{d\mid n,\,d> 0} \chi(d)$ where $\chi$ is the unique nontrivial Dirichlet character mod $3$. Either using the formula for ${\bf g}_{2,2,1,1}$ in terms of ${\bf p}_{k_i}^{r_i}$ or by direct computation, we determine that $$H(q) =\frac{1}{2}q^2+q^3+\cdots$$ from which we can solve $B_1=1/18$, $B_2=-1/6$, $B_3=1/6$, and $B_4=1/2$. We conclude the formula $$\left(\!
		\begin{array}{ll}
			\,\textrm{weighted }\#\,3\textrm{-duck tilings with }n\textrm{ tiles} \\
			\textrm{and curvature }(\kappa_b,\kappa_w)=((2,2,1,1),\emptyset)
		\end{array}\!\!
	\right)=-\tfrac{1}{6}\sigma_0^\chi(n)+\tfrac{1}{6}\sigma_1(n)+\tfrac{1}{2}\sigma_1(n/3).$$

For instance, when $n=3$, there is a single tiling with no nontrivial orientation-preserving automorphisms:

\begin{figure}[H]
\includegraphics[width=3in]{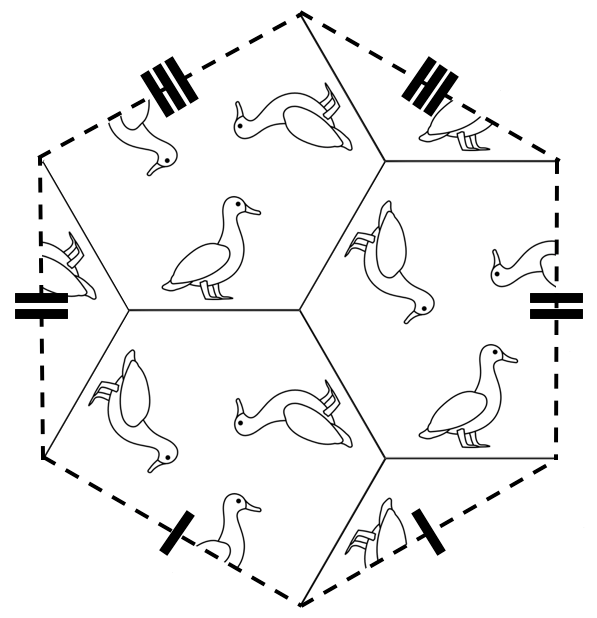}
\caption{A tiling with curvatures $((2,2,1,1),\emptyset)$.}
\label{3tiles}
\end{figure}

Finally, we compute the volume of $\mathcal{H}_3^1(2,2,1,1)$ from the generating function for tilings. In the notation of Theorem \ref{volume}, the leading term of $P(\ell)$ is $\frac{\pi^2}{9}\ell^2$. Since the moduli space has dimension $2$, we conclude that $$\textrm{Vol}\,\mathcal{H}_3^1(2,2,1,1)=\frac{2\pi^2}{3}.$$ The projectivization of the moduli space is a further quotient by the action of $S^1$ and thus has volume $\textrm{Vol}\,\mathbb{P}\mathcal{H}_3(2,2,1,1)=\frac{\pi}{3}.$ This agrees with the Gauss-Bonnet formula in Theorem 1.2 of \cite{mcmullen} for the complex-hyperbolic volume of the moduli space.


\begin{thebibliography}{99}

\bibitem{aez}
J. Athreya, A. Eskin, and A. Zorich.
\emph{Right-angled billiards and volumes of moduli spaces of quadratic differentials on $\mathbb {CP}^1$.}
arXiv:1212.1660,
2012.

\bibitem{bcggm}
M. Bainbridge, D. Chen, Q. Gendron, S. Grushevsky, and M. Moeller.
\emph{Strata of $k$-differentials.}
arXiv:1610.09238,
2016.

\bibitem{burnside}
W. Burnside.
\emph{Theory of groups of finite order.}
Cambridge University Press,
1911.


\bibitem{bo}
S. Bloch and A. Okounkov.
\emph{The character of the infinite wedge representation,}
Advances in Mathematics 149.1: 1-60,
2000.

\bibitem{cmz}
D. Chen, M. M\"oller, and D. Zagier.
\emph{Quasimodularity and large genus limits of Siegel-Veech constants},
Journal of the American Mathematical Society 31.4: 1059-1163,
2018.

\bibitem{diashur}
F. Diamond and J. Shurman.
\emph{A first course in modular forms,}
Vol. 140. New York: Springer,
2005.

\bibitem{dijkgraaf}
R. Dijkgraaf.
\emph{Mirror symmetry and elliptic curves,}
Progress in Math, Vol. 129, The Moduli Space of Curves,
1995.

\bibitem{engel}
P. Engel.
\emph{Hurwitz Theory of Elliptic Orbifolds, II},
arXiv 1809.07434,
2018.

\bibitem{engsmi}
P. Engel and P. Smillie. 
\emph{The number of convex tilings of the sphere by triangles, squares, or hexagons.}
Geometry \& Topology 22.5: 2839-2864,
2018.

\bibitem{engsmi}
P. Engel and P. Smillie.
\emph{The number of non-negative curvature triangulations of $S^ 2$,}
arXiv:1702.02614,
2017.

\bibitem{eo1}
A. Eskin and A. Okounkov.
\emph{Asymptotics of numbers of branched coverings of a torus and volumes of moduli spaces of holomorphic differentials},
Inventiones Mathematicae 145.1: 59-103,
2001.

\bibitem{eo2}
A. Eskin and A. Okounkov.
\emph{Pillowcases and quasimodular forms},
Algebraic geometry and number theory,
Birkh\"auser Boston: 1-25,
2006. 

\bibitem{fomin}
S. Fomin and N. Lulov.
\emph{On the number of rim hook tableaux},
Journal of Mathematical Sciences 87.6: 4118-4123,
1997.

\bibitem{frobenius}
\emph{Uber die Charaktere der symmetrischen Gruppe},
1903.

\bibitem{goujard}
E. Goujard.
\emph{Volumes of strata of moduli spaces of quadratic differentials: getting explicit values,}
arXiv:1501.01611,
2015.

\bibitem{hurwitz}
A. Hurwitz.
\emph{\"Uber Riemann'sche Fl\"achen mit gegebenen Verzweigungspunkten.}
Mathematische Annalen 39.1: 1-60,
1891.

\bibitem{kac}
V. Kac.
\emph{Infinite-dimensional Lie algebras},
Vol. 44. Cambridge university press,
1994.

\bibitem{kerov}
S. Kerov and G. Olshanski.
\emph{Polynomial functions on the set of Young diagrams},
Comptes rendus de l'Acad\'emie des sciences, S\'erie 1, Math\'ematique 319.2: 121-126,
1994.

\bibitem{masur}
H. Masur.
\emph{Interval exchange transformations and measured foliations.}
Annals of Mathematics 115: 169-200,
1982.

\bibitem{mr}
T. Milanov and Y. Ruan.
Gromov-Witten theory of elliptic orbifold $\mathbb{P}^1$ and quasi-modular forms.
arXiv:1106.2321, 2011.

\bibitem{milas}
A. Milas.
\emph{Formal differential operators, vertex operator algebras and zeta-values, II,}
Journal of Pure and Applied Algebra 183.1: 191-244,
2003.

\bibitem{mcmullen}
C. T. McMullen.
\emph{The Gauss-Bonnet theorem for cone manifolds and volumes of moduli spaces,}
2013.

\bibitem{murnaghan}
F. Murnaghan. 
\emph{The characters of the symmetric group.}
American Journal of Mathematics 59.4: 739-753,
1937.

\bibitem{nakayama}
T. Nakayama. 
\emph{On some modular properties of irreducible representations of a symmetric group, I.}
Japanese journal of mathematics 17,
1940.

\bibitem{ng}
D.-M. Nguyen.
\emph{Volume form on moduli spaces of d-differentials}, 
arXiv: 1902.04830,
2019.

\bibitem{oberdieck}
G. Oberdieck and A. Pixton.
\emph{Holomorphic anomaly equations and the Igusa cusp form conjecture,}
Inventiones mathematicae 213.2: 507-587,
2018.

\bibitem{oo}
A. Okounkov and G. Olshanski.
\emph{Shifted schur functions},
Algebra i Analiz 9.2: 73-146,
1997.

\bibitem{op}
A. Okounkov and R. Pandharipande.
\emph{Gromov-Witten theory, Hurwitz theory, and completed cycles,}
Annals of mathematics 163.2: 517-560,
2006.

\bibitem{rz}
R. Rios-Zertuche.
\emph{The pillowcase distribution and near-involutions,}
Electronic Journal of Probability 19,
2014.

\bibitem{rz2}
R. Rios-Zertuche.
\emph{An introduction to the half-infinite wedge.}
Mexican Mathematicians Abroad 657: 197,
2016.

\bibitem{schur}
I. Schur,
\emph{Neue Begrundung der Theorie der Gruppencharaktere},
1905.

\bibitem{shen}
M. Krawitz and Y. Shen. 
Landau-Ginzburg/Calabi-Yau Correspondence of all Genera for Elliptic Orbifold $\mathbb{P}^1$.
arXiv:1106.6270, 2011.

\bibitem{veech}
W. Veech.
\emph{Space of interval exchange maps.}
Annals of Mathematics 115: 201-242,
1982.

\bibitem{yang}
Y. Yang.
\emph{Transformation formulas for generalized Dedekind eta functions,}
Bulletin of the London Mathematical Society 36.5: 671-682,
2004.

\end{thebibliography}
\end{document}